\documentclass[11pt,twoside]{amsart}

\usepackage{amssymb}
\usepackage{amsthm}
\usepackage{amsmath}
\usepackage{hyperref}
\usepackage{array}
\usepackage{ifthen}
\usepackage{mathtools}

\usepackage[utf8]{inputenc}   
\usepackage[T1]{fontenc}
\usepackage{textcomp}
\usepackage[english]{babel}

\setlength{\oddsidemargin}{0cm}
\setlength{\evensidemargin}{0cm}
\setlength{\textwidth}{16cm}
\setlength{\topmargin}{-1cm}
\setlength{\textheight}{22cm}

\usepackage{etoolbox}
\let\bbordermatrix\bordermatrix
\patchcmd{\bbordermatrix}{8.75}{4.75}{}{}
\patchcmd{\bbordermatrix}{\left(}{\left[}{}{}
\patchcmd{\bbordermatrix}{\right)}{\right]}{}{}

\usepackage{graphicx}
\usepackage{tikz}
\usetikzlibrary{arrows}
\usetikzlibrary{tikzmark}
\usetikzlibrary{shapes}
\usetikzlibrary{calc}

\setcounter{MaxMatrixCols}{20}

\theoremstyle{plain}
\newtheorem{theorem}{Theorem}

\newtheorem{corollary}[theorem]{Corollary}
\newtheorem{proposition}[theorem]{Proposition}

\theoremstyle{definition}

\theoremstyle{remark}

\newcommand{\set}[1]{\left\{#1\right\}}
\newcommand{\seqnum}[1]{\href{http://oeis.org/#1}{\underline{#1}}}

\def\tn{\textnormal}

\def\F{\mathcal{F}}

\def\M{\mathcal{M}}

\def\P{\mathcal{P}}
\def\Q{\mathcal{Q}}
\def\S{\mathcal{S}}
\def\T{\mathcal{T}}

\def\ce{\coloneqq}

\newcommand{\ignore}[1]{}

\DeclareMathOperator{\topt}{topt}
\DeclareMathOperator{\lopt}{lopt}
\DeclareMathOperator{\lofi}{lofi}
\DeclareMathOperator{\hdd}{hdd}
\DeclareMathOperator{\north}{north}
\DeclareMathOperator{\height}{ht}

\setcounter{tocdepth}{3} 
\makeatletter
\renewcommand{\tocsection}[3]{%
\indentlabel{\@ifnotempty{#2}{\makebox[1.50em][l]{\ignorespaces#1#2.}}}#3}
\renewcommand{\tocsubsection}[3]{%
\indentlabel{\@ifnotempty{#2}{\hspace*{1.50em}\makebox[2.25em][l]{\ignorespaces#1#2.}}}#3}
\renewcommand{\tocsubsubsection}[3]{%
\indentlabel{\@ifnotempty{#2}{\hspace*{3.75em}\makebox[3.00em][l]{\ignorespaces#1#2.}}}#3}
\makeatother

\begin{document}

\title
[A new generalization of the Narayana numbers]
{A new generalization of the Narayana numbers inspired by linear operators 
on associative $d$-ary algebras}

\author{Yu Hin (Gary) Au}

\address{Department of Mathematics and Statistics, University of Saskatchewan, Saskatoon, Saskatchewan, S7N 5E6 Canada}

\email{au@math.usask.ca}

\author{Murray R. Bremner}

\address{Department of Mathematics and Statistics, University of Saskatchewan, Saskatoon, Saskatchewan, S7N 5E6 Canada}

\email{bremner@math.usask.ca}


\date{\today}

\keywords{Narayana numbers, Catalan numbers, bijective combinatorics, linear operators, associative algebras, algebraic operads}

\begin{abstract}
We introduce and study a generalization of the Narayana numbers $N_d(n,k) = \frac{1}{n+1} \binom{n+1}{k+1} \binom{ n + (n-k)(d-2)+1}{k}$ for integers $d \geq 2$ and $n,k \geq 0$. This two-parameter array extends the classical Narayana numbers ($d=2$) and yields a $d$-ary analogue of the Catalan numbers $C_d(n) = \sum_{k=0}^n N_d(n,k)$. We give nine combinatorial interpretations of $N_d(n,k)$ that unify and generalize known combinatorial interpretations of the Narayana numbers and $C_3(n)$ in the literature. In particular, we show that $N_d(n,k)$ counts a natural class of operator monomials over a $d$-ary associative algebra, thereby extending a result of \cite{BremnerE22} for the binary case. We also construct explicit bijections between these monomials and several families of classic combinatorial objects, including Schr\"{o}der paths, Dyck paths, rooted ordered trees, and $231$-avoiding permutations.
\end{abstract}

\maketitle

{\footnotesize\tableofcontents}

\section{Introduction}

Given an integer $d \geq 2$ and integers $n,k \geq 0$, define
\[
N_d(n,k) \ce \frac{1}{n+1} \binom{n+1}{k+1} \binom{ n + (n-k)(d-2)+1}{k}.
\]
Table~\ref{tab101} lists $N_2(n,k)$ and $N_3(n,k)$ for several small values of $n$ and $k$.

\begin{table}[htbp]
$\begin{array}{l|rrrrrrrr}
n~\backslash~k & 0 & 1 & 2 & 3 & 4 & 5 & 6 & 7\\
\hline
0 &1 & & & & & & & \\
1 &1 &1 & & & & & & \\
2 &1 &3 &1 & & && &  \\
3 &1 &6 &6 &1 & & & & \\
4 & 1& 10& 20& 10&1 && &  \\
5 &1 &15 & 50 &50  &15 &1& &  \\
6 &1 & 21& 105& 175&105 &21 &1 & \\
7 &1 &28 &196 &490 &490 &196 &28 &1 \\
\end{array}
\quad
\begin{array}{l|rrrrrrrr}
n~\backslash~k & 0 & 1 & 2 & 3 & 4 & 5 & 6 & 7 \\
\hline
0 &1 & & & & & & & \\
1 &1 &1 & & & & & & \\
2 &1 &4 &1 & & && &  \\
3 &1 &9 &10 &1 & & & & \\
4 & 1& 16& 42& 20&1 && &  \\
5 &1 &25 & 120 &140  &35 &1& &  \\
6 &1 & 36& 275& 600 & 378 & 56 &1 & \\
7 &1 & 49 & 546 & 1925 & 2310 & 882 & 84 &1 
\end{array}
$
\medskip
\caption{The numbers $N_2(n,k)$ (left) and $N_3(n,k)$ (right) for $0 \leq k \leq n \leq 7$}\label{tab101}
\end{table}

When \( d = 2 \), we obtain the well-studied Narayana numbers (sequence \seqnum{A001263} from the On-line Encyclopedia of Integer Sequences (OEIS)~\cite{OEIS}), named after the 20th-century Indo-Canadian mathematician Tadepalli Venkata Narayana. For an introduction to the Narayana numbers, see, for instance,~\cite[Chapter 2]{Petersen15} or~\cite[Chapter 33]{Grimaldi12}. The sequences \( N_d(n,k) \) for \( d \geq 3\) were not in the OEIS at the start of this research project; however, \( N_3(n,k) \) was briefly mentioned in Callan and Mansour~\cite[Section 3.16]{CallanM18} and studied in more depth in Huh et al.~\cite{HuhKSS24}. In preparation for this manuscript, we have submitted the sequences $N_3(n,k)$ (\seqnum{A391045}), $N_4(n,k)$ (\seqnum{A391046}), $N_5(n,k)$ (\seqnum{A391047}), and $N_6(n,k)$ (\seqnum{A391048}) to the OEIS. 
(The sequence $N_3(n,k)$ has been applied by the second author \cite{Bremner25} 
in work on algebraic identities for linear operators.)
For several other generalizations of the Narayana numbers, see~\cite{Barry11, Callan22, KruchininKS22, Sulanke04}. Additionally, for the integer sequence named after the 14th-century Indian mathematician Narayana Pandita, which is sometimes also referred to as the Narayana numbers in the literature, see~\cite{AlloucheJ96} and \seqnum{A000930} from the OEIS.

It is well-known that each row of Narayana numbers sums to the familiar Catalan numbers (\seqnum{A000108}). Consequently, our generalized Narayana numbers naturally lead to a \( d \)-ary generalization of the Catalan numbers. More precisely, for \( d \geq 2 \), we define 
\[
C_d(n) \ce \sum_{k=0}^{n} N_d(n,k).
\]
Table~\ref{tab102} lists the first few entries of \( C_d(n) \) for \( d \leq 6 \). Notably, \( C_2(n) \) corresponds to the Catalan numbers, which admits over 200 combinatorial interpretations~\cite{Stanley15}. The sequence \( C_3(n) \) has also been studied in various combinatorial contexts~\cite{AlbertHPSV18, AsinowskiB24, Barry16, BeatonBGR19, BenyiMR24, BrycFM19, BursteinS22, CallanM18, CaoJL19, GaoK19, GuLM08, HuhKSS24, MartinezS18, MerinoM23, YanL20} --- see also \seqnum{A106228} for additional properties of this sequence. The sequences for $4 \le d \le 6$ appear in the OEIS but lack substantial combinatorial interpretation. As a consequence of our work on $N_d(n,k)$, this paper connects those sequences to a variety of combinatorial objects.

\begin{table}[htbp]
\[
\begin{array}{l|rrrrrrrrr}
d~\backslash~n &\quad 0 &\quad 1 &\quad 2 &\quad 3 &\quad 4 &\quad 5 &\quad 6 &\quad 7 &\quad \text{OEIS} \\
\cline{1-10}
\\[-10pt]
2  &\quad 
1 &\quad 2 &\quad 5 &\quad 14 &\quad 42 &\quad 132 &\quad 429 &\quad 1430 
&\quad \seqnum{A000108} \\
3  &\quad 
1 &\quad 2 &\quad 6 &\quad 21 &\quad 80 &\quad 322 &\quad 1347 &\quad 5798
&\quad \seqnum{A106228} \\
4 &\quad 
1 &\quad 2 &\quad 7 &\quad 29 &\quad 131 &\quad 627 &\quad 3124 &\quad 16032
&\quad  \seqnum{A300048} \\
5  &\quad 
1 &\quad 2 &\quad 8 &\quad 38 &\quad 196 &\quad 1073 &\quad 6120 &\quad 35968
&\quad \seqnum{A364723} \\
6  &\quad 
1 &\quad 2 &\quad 9 &\quad 48 &\quad 276 &\quad 1687 &\quad 10750 &\quad 70597
&\quad\seqnum{A364734}
\end{array}
\]
\caption{The generalized Catalan numbers $C_d(n)$}\label{tab102}
\end{table}

In this manuscript, we study the generalized Narayana numbers \( N_d(n,k) \) and present nine combinatorial interpretations of these numbers. First, in Section~\ref{sec2}, we demonstrate that \( N_d(n,k) \) counts a specific set of operator monomials over a \( d \)-ary algebra, generalizing a result by the second author and Elgendy~\cite{BremnerE22}, who established this for the case when \( d=2 \). 

In Section~\ref{sec3}, we provide bijections from these \( d \)-ary operator monomials to several classic combinatorial objects, showing that \( N_d(n,k) \) also counts certain subsets of Schr\"{o}der paths, rooted ordered trees, Dyck paths, and \( 231 \)-avoiding permutations.

In Section~\ref{sec4}, we provide four additional combinatorial interpretations of \( N_d(n,k) \) that generalize known interpretations for \( N_3(n,k) \) studied in Huh et al.~\cite{HuhKSS24}. These interpretations involve Schr\"{o}der paths with labelled descents, a family of lattice paths generalizing the \( F \)-paths defined in Huh et al., labelled Dyck paths, and labelled ordered trees. Our new interpretations also specialize to classic interpretations for Narayana and Catalan numbers when \( d=2 \). We conclude with some potential future research directions in Section~\ref{sec5}.

By studying \( N_d(n,k) \), we aim to generalize some classic results on Narayana and Catalan numbers. Moreover, under this framework, the well-studied sequence \seqnum{A106228} can now be seen as a ternary extension of the classic Catalan numbers, offering new perspectives on this and related sequences. The mapping we provide between \( d \)-ary operator monomials and other well-studied combinatorial objects may also be beneficial in exploring related problems in algebraic operads.
In particular, since the $d$-ary operator monomials form a basis for the nonsymmetric operad 
generated by one unary operation and one associative $d$-ary operation, 
the bijections we construct in this paper 
between these operator monomials and other combinatorial objects will aid in the understanding of
the $d$-ary analogues of well-known binary operads.

\section{$d$-ary operator monomials}\label{sec2}

In this section, we provide the first combinatorial interpretation of $N_d(n,k)$;
namely, the interpretation in terms of $d$-ary operator monomials. 

Let $\mathcal{A}$ be an associative $d$-ary algebra.
Specifically, $\mathcal{A}$ is a vector space equipped with a multilinear map 
$B \colon \mathcal{A}^d \to \mathcal{A}$
with product denoted $B( a_1, \dots, a_d ) \longmapsto a_1 \cdots a_d$ 
that satisfies $d$-ary associativity. That is, for every $a_1, \ldots, a_{2d-1} \in \mathcal{A}$ and indices $0 \le i < j \le d-1$, 
\[
a_1 \cdots a_i ( a_{i+1} \cdots a_{i+d} ) a_{i+d+1} \cdots a_{2d-1}
=
a_1 \cdots a_j ( a_{j+1} \cdots a_{j+d} ) a_{j+d+1} \cdots a_{2d-1}.
\]
It is easy to see that any monomial in a $d$-ary operation must have $k(d-1)+1$ indeterminates
for some $k \ge 0$; that is, the number of indeterminates must be congruent to 1 modulo $d-1$.
(Thus for $d=2$ this is no restriction, and for $d=3$ it says that the number of indeterminates
must be odd.)
The $d$-ary associativity identities imply by the usual inductive argument on $k$
that in any $d$-ary monomial we may unambiguously remove the parentheses.

Associative $d$-ary algebras were first studied by Carlsson \cite{Carlsson};
for a more modern operadic point of view see Gnedbaye \cite{Gnedbaye}.
For background on operads, see Loday and Vallette \cite{LodayVallette} for the theoretical aspects,
and Bremner and Dotsenko \cite{BremnerDotsenko} for the algorithmic aspects.

Next, let $L \colon \mathcal{A} \to \mathcal{A}$ be a linear operator (unary operation) on 
the underlying vector space of the associative $d$-ary algebra $\mathcal{A}$. 
We define the set of \emph{($d$-ary) operator monomials} \( \mathcal{M}_d \) recursively as follows:
\begin{itemize}
\item 
a single indeterminate (regarded as a generic element of $\mathcal{A}$) is an operator monomial;
\item 
if $M_1, \dots, M_d$ are operator monomials, then $M_1 \cdots M_d$ 
(using the $d$-ary associative product in $\mathcal{A}$) is also an operator monomial;
\item if $M$ is an operator monomial, then $L(M)$ is also an operator monomial.
\end{itemize}

Given an operator monomial \( M \in \mathcal{M}_d \), we let \( \topt(M) \) (for \emph{total operations}) denote the total number of operations which appear (counting both \( B \) and \( L \)), and \( \lopt(M) \) (for \emph{linear operations}) denote the number of occurrences of the linear operator \( L \). 
For example, Table~\ref{tab201} lists the ternary (\( d=3 \)) operator monomials \( M \) with \( \topt(M) = 3 \), where we use the standard abbreviations such as $L(L(a)) = L^2(a)$.

\begin{table}[htbp]
\centering
\[
\begin{array}{lcc}
\tn{operator monomials $M$} &\lopt(M) & \tn{count} 
\\[4pt]
\hline
\\[-10pt]
 a_1a_2a_3a_4a_5a_6a_7
& 0
& 1 
\\[4pt]
\hline
\\[-10pt]
L(a_1a_2a_3a_4a_5), \quad 
L(a_1a_2a_3)a_4a_5, \quad 
L(a_1)a_2a_3a_4a_5, \quad 
a_1L(a_2a_3a_4)a_5, \quad 
& 1 
& 9 
\\
\\[-10pt]
a_1L(a_2)a_3a_4a_5, \quad
a_1a_2L(a_3a_4a_5), \quad
a_1a_2L(a_3)a_4a_5, \quad
a_1a_2a_3L(a_4)a_5, \quad 
&& 
\\
\\[-10pt]
a_1a_2a_3a_4L(a_5) 
&&
\\[4pt]
\hline
\\[-10pt]
L^2(a_1a_2a_3), \quad 
L(L(a_1)a_2a_3), \quad
L^2(a_1)a_2a_3, \quad 
L(a_1L(a_2)a_3), \quad
& 2
& 10 
\\ 
\\[-10pt]
L(a_1a_2L(a_3)), \quad
L(a_1)L(a_2)a_3, \quad 
L(a_1)a_2L(a_3), \quad 
a_1L^2(a_2)a_3, \quad 
&& 
\\
\\[-10pt]
a_1L(a_2)L(a_3), \quad   
a_1a_2L^2(a_3)
&&
\\[4pt]
\hline
\\[-10pt]
L^3(a_1)
& 3 
& 1 
\end{array}
\]
\medskip
\caption{The 21 ternary operator monomials $M \in \M_3$ with $\topt(M) =3$}
\label{tab201}
\end{table}

Observe that for every \( M \in \mathcal{M}_d \), we have \( 0 \leq \lopt(M) \leq \topt(M) \). Next, we let \( \deg(M) \) denote the \emph{degree} of \( M \), which is the number of indeterminates involved in \( M \). It follows that
\begin{equation}\label{eqdegM}
\deg(M) = (\topt(M) - \lopt(M))(d-1) + 1
\end{equation}
for every \( M \in \mathcal{M}_d \). 
To understand this, note that \( M \) contains exactly \( \topt(M) - \lopt(M) \) instances of the 
associative \( d \)-ary operation, and each application of that operation ``multiplies down'' \( d \) 
inputs into one output. An immediate consequence of~\eqref{eqdegM} is that $\deg(M)$ is congruent to \( 1 \) modulo $d-1$ for every \( M \in \mathcal{M}_d \). In fact, to produce all operator monomials of degree $k(d-1)+1$ (for any given integer $k \geq 1$), we can start with the generic monomial $a_1 \cdots a_{k(d-1)+1}$ and insert  operator symbols $L$ in all possible ways, respecting the fact that any product must contain a number of factors congruent to 1 modulo $d-1$.

Next, we show that the set of monomials in \( \mathcal{M}_d \) with a fixed number of operations is counted by \( N_d(n,k) \).

\begin{theorem}\label{thm201}
Let $d \geq 2$ and $n,k \geq 0$ be integers. Then \( N_d(n,k) \) counts the number of \( d \)-ary operator monomials \( M \in \mathcal{M}_d \) such that \( \topt(M) = n \) and \( \lopt(M) = k \).
\end{theorem}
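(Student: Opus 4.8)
The plan is to encode $\M_d$ by a bivariate generating function and then read off $N_d(n,k)$ by Lagrange inversion. The first and most delicate step is to record the recursive structure of operator monomials correctly under $d$-ary associativity. Because every parenthesization of a $d$-ary product agrees, each $M\in\M_d$ has a unique \emph{flattened} description: it is a single indeterminate; or $L(M')$ for a unique $M'\in\M_d$; or a product $P_1\cdots P_m$ with $m\equiv 1\pmod{d-1}$ and $m\ge d$ in which no factor $P_i$ is itself a top-level product (so each $P_i$ is an indeterminate or is $L$-capped). Verifying that these three cases are exhaustive and mutually exclusive --- in particular that the flattening genuinely forbids a product-factor from being another product --- is the crux that rules out over-counting, and I expect it to be the main obstacle.

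Next I would translate this into generating functions. Let $u$ mark each occurrence of the $d$-ary product and $v$ mark each occurrence of $L$, so that a monomial with $\topt(M)=n$ and $\lopt(M)=k$ (hence $n-k$ products and $k$ linear operations) contributes $u^{n-k}v^k$; the theorem becomes the assertion that $[u^{n-k}v^k]G=N_d(n,k)$, where $G$ is the generating function of $\M_d$. Let $A$ be the generating function of the non-product monomials (indeterminates and $L$-capped monomials). The trichotomy above gives $A=1+vG$ together with
\[
G=A+\sum_{t\ge 1}u^{t}A^{t(d-1)+1}=\frac{A}{1-uA^{d-1}},
\]
since a flat product of $t(d-1)+1$ factors uses exactly $t$ copies of the $d$-ary operation. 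Eliminating $G$ produces the single functional equation
\[
A=1+\frac{vA}{1-uA^{d-1}}.
\]

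Finally I would extract coefficients. The equation has the form $A=1+v\,\phi(A)$ with $\phi(X)=X/(1-uX^{d-1})$ (viewing $u$ as a parameter), so $A-1$ is determined by Lagrange inversion in $v$ and, using $G=(A-1)/v$,
\[
[u^{n-k}v^{k}]G=[u^{n-k}v^{k+1}]A=\frac{1}{k+1}\,[u^{n-k}w^{k}]\,\phi(1+w)^{k+1}.
\]
Expanding $\phi(1+w)^{k+1}=(1+w)^{k+1}\bigl(1-u(1+w)^{d-1}\bigr)^{-(k+1)}$ as a power series in $u$, the coefficient of $u^{n-k}$ is $\binom{n}{k}(1+w)^{(n-k)(d-1)+k+1}$, and then taking $[w^{k}]$ gives
\[
[u^{n-k}v^{k}]G=\frac{1}{k+1}\binom{n}{k}\binom{(n-k)(d-1)+k+1}{k}.
\]
It remains to reconcile this with the displayed formula for $N_d(n,k)$: the identity $\frac{1}{n+1}\binom{n+1}{k+1}=\frac{1}{k+1}\binom{n}{k}$ handles the first factor, and $(n-k)(d-1)+k+1=n+(n-k)(d-2)+1$ handles the second, completing the proof. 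Apart from the associativity bookkeeping, the only remaining work is this routine binomial simplification and a check that $A$ is a well-defined formal power series --- it is, since $A=1+O(v)$ and $\phi$ is regular at $X=1$, so the Lagrange inversion is legitimate.
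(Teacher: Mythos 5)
Your proposal is correct and follows essentially the same route as the paper: decompose each monomial uniquely into a flat product of irreducible (indeterminate or $L$-capped) factors, derive the functional equation $G = A/(1-uA^{d-1})$ with $A = 1+vG$ --- which is exactly the paper's equation for $\sum_M x^{\topt(M)}y^{\lopt(M)}$ after the change of variables $x\mapsto u$, $xy\mapsto v$ --- and extract coefficients by Lagrange inversion. The only (harmless) difference is that you invert in the $L$-marking variable $v$ with $\Phi(X)=X/\bigl(1-uX^{d-1}\bigr)$, whereas the paper substitutes $v=xyu$ and inverts in $x$ with $\Phi(t)=(1+t)\bigl(y+t(1+t)^{d-2}\bigr)$; both computations land on the same binomial expression for $N_d(n,k)$.
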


Theorem~\ref{thm201} extends a result by the second author and Elgendy~\cite[Lemma 2.5]{BremnerE22}, who proved the result for the case \(d=2\). Before we prove Theorem~\ref{thm201}, we first derive a functional equation for the generating function of \( N_d(n,k) \). Let
\[
A_d(x,y) \ce \sum_{n \geq 0} \sum_{k=0}^n N_d(n,k)x^n y^k.
\]
For example, for \( d=2 \), we have
\[
A_2(x,y) = 1 + (1+y)x + (1+3y+y^2)x^2 + (1+6y+6y^2+y^3)x^3 + \cdots
\]
We then have the following.

\begin{proposition}\label{prop202}
For every integer \( d \geq 2 \),
\begin{equation}\label{prop202eq0}
A_d(x,y) = \left( 1 + xy A_d(x,y) \right) \left( 1 + x A_d(x,y) \left( 1 + xy A_d(x,y) \right)^{d-2} \right).
\end{equation}
\end{proposition}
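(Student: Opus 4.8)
The plan is to prove \eqref{prop202eq0} by Lagrange inversion, reading it as a statement about the single formal power series $A_d$ whose coefficients are the explicitly-defined numbers $N_d(n,k)$. The first move is to clear the self-referential shape into a standard implicit form. Setting $w \ce xy\,A_d(x,y)$, so that $1 + xyA_d = 1+w$ and $xA_d = w/y$, I would substitute into \eqref{prop202eq0} and clear denominators by $xy$; every step is a multiplication or a legitimate division (the only division, by $y$, is valid because $w=xyA_d$ is divisible by $y$), and each is reversible over $\mathbb{Q}[[x,y]]$. The upshot is that \eqref{prop202eq0} is equivalent to
\[
w = x\,(1+w)\bigl(y + w(1+w)^{d-2}\bigr) \ce x\,\Phi(w).
\]
It therefore suffices to show that the unique power series $\hat w(x,y)$ with no constant term satisfying $\hat w = x\Phi(\hat w)$ has coefficients $[x^{n+1}y^{k+1}]\hat w = N_d(n,k)$ for all $n\ge k\ge 0$; since $w=xyA_d$ has exactly this shape and these are all of its nonzero coefficients, the match forces $\hat w = xyA_d$ and recovers \eqref{prop202eq0}.

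Next I would treat $y$ as a parameter and apply Lagrange inversion in $x$ to $\hat w = x\Phi(\hat w)$. Existence and uniqueness of $\hat w$ is automatic, since the explicit factor of $x$ on the right determines the coefficients of $\hat w$ recursively in the $x$-adic topology. Lagrange inversion then yields
\[
[x^{n+1}]\hat w = \frac{1}{n+1}[w^n]\,\Phi(w)^{n+1} = \frac{1}{n+1}[w^n]\,(1+w)^{n+1}\bigl(y + w(1+w)^{d-2}\bigr)^{n+1}.
\]
Expanding the last factor by the binomial theorem as $\sum_{j}\binom{n+1}{j} y^{\,n+1-j} w^{j}(1+w)^{j(d-2)}$ collapses the expression to $\sum_j \binom{n+1}{j} y^{\,n+1-j} w^j (1+w)^{\,n+1+j(d-2)}$, and extracting $[w^n]$ attaches the single binomial $\binom{n+1+j(d-2)}{n-j}$ to each term. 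Reading off the coefficient of $y^{k+1}$ selects $j=n-k$, and after the rewriting $\binom{n+1}{n-k}=\binom{n+1}{k+1}$ the surviving term is exactly $\frac{1}{n+1}\binom{n+1}{k+1}\binom{n+(n-k)(d-2)+1}{k}=N_d(n,k)$, as required.

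The main obstacle is the reduction to standard form together with the coefficient bookkeeping, rather than any deep difficulty. I must verify that the substitution $w=xyA_d$ is genuinely reversible over $\mathbb{Q}[[x,y]]$, so that solving the reduced equation really is equivalent to \eqref{prop202eq0}, and that the two-variable Lagrange inversion is applied correctly with $y$ held fixed. The one structural point worth flagging is that $\Phi$ factors cleanly as $(1+w)\bigl(y+w(1+w)^{d-2}\bigr)$, which isolates the entire $d$-dependence in the single exponent $d-2$; this is precisely what makes only $(1+w)^{\,n+1+j(d-2)}$ survive the expansion and leaves the $d$-parameter entering the final formula solely through $j(d-2)=(n-k)(d-2)$. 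Once that factorization is in hand, the remaining binomial manipulations are routine.
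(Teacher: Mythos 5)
Your proposal is correct and follows essentially the same route as the paper: the substitution $w = xyA_d$ reducing \eqref{prop202eq0} to $w = x(1+w)\bigl(y+w(1+w)^{d-2}\bigr)$ is exactly the paper's $v = xyu$ step, and the Lagrange inversion computation (with the index change $j = n-k$ in place of the paper's $i = k+1$) yields the identical coefficient extraction. The only cosmetic difference is that you argue from uniqueness of the solution $\hat w$ back to $A_d$, while the paper defines $u$ as the solution and verifies its coefficients; both are sound.
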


\begin{proof}
Let \( u \) be a function of \( x \) and \( y \) that satisfies the functional equation
\begin{equation}\label{prop202eq1}
u = (1 + xy u) \left( 1 + xu (1 + xy u)^{d-2} \right).
\end{equation}
Next, let \( v \ce xy u \). Multiplying both sides of~\eqref{prop202eq1} by \( xy \) and simplifying, we obtain
\begin{equation}\label{prop202eq2}
v = x(1 + v)(y + v(1 + v)^{d-2}).
\end{equation}
To prove our claim, it suffices to show that \( [x^{n+1}][y^{k+1}] v = N_d(n,k) \) for every \( n, k \geq 0 \) and \( d \geq 2 \). Observe that~\eqref{prop202eq2} can be rewritten as \( v = x \Phi(v) \), where \( \Phi(t) \ce (1 + t)\left( y + t(1 + t)^{d-2} \right) \). Now, \( \Phi(0) = y \), which we may assume to be nonzero. We will now proceed to obtain the coefficients of \( v \) via Lagrange inversion (see, for instance,~\cite[Chapter 5]{Wilf06} for a reference).

\begin{align*}
[x^{n+1}] [y^{k+1}] v
&= [y^{k+1}] [t^{n}] \frac{1}{n+1} \Phi(t)^{n+1} \\
&= [y^{k+1}] [t^{n}] \frac{1}{n+1} (1 + t)^{n+1} \left( y + t(1 + t)^{d-2} \right)^{n+1} \\
&= [y^{k+1}] [t^{n}] \frac{1}{n+1} (1 + t)^{n+1} \left( \sum_{i = 0}^{n+1} \binom{n+1}{i} y^i t^{n+1-i} (1 + t)^{(n+1-i)(d-2)} \right) \\
&= [y^{k+1}] [t^{n}] \frac{1}{n+1} \left( \sum_{i = 0}^{n+1} \binom{n+1}{i} y^i t^{n+1-i} (1 + t)^{n + (n+1-i)(d-2) + 1} \right) \\
&= [t^{n}] \frac{1}{n+1} \binom{n+1}{k+1} t^{n-k} (1 + t)^{n + (n-k)(d-2) + 1} \\
&= [t^{k}] \frac{1}{n+1} \binom{n+1}{k+1} (1 + t)^{n + (n-k)(d-2) + 1} \\
&= \frac{1}{n+1} \binom{n+1}{k+1} \binom{n + (n-k)(d-2) + 1}{k} \\
&= N_d(n,k).
\end{align*}
This concludes the proof.
\end{proof}

Next, given \( M \in \mathcal{M}_d \), we say that \( M \) is \emph{irreducible} if it satisfies one of the following conditions:
\begin{itemize}
    \item It is a single indeterminate (in which case \( \topt(M) = \lopt(M) = 0 \)), or
    \item \( M = L(M') \) for some operator monomial \( M' \in \mathcal{M}_d \).
\end{itemize}
We let \( \overline{\mathcal{M}}_d \subseteq \mathcal{M}_d \) denote the set of irreducible \( d \)-ary operator monomials. Additionally, given a positive integer \( n \), we let \( [n] \) denote the set \( \{1, 2, \ldots, n\} \). We are now ready to prove Theorem~\ref{thm201}.

\begin{proof}[Proof of Theorem~\ref{thm201}]
Let \( d \geq 2 \) be fixed, and consider the generating function 
\[
u \ce \sum_{M \in \mathcal{M}_d} x^{\topt(M)} y^{\lopt(M)}.
\]
To prove our claim, we will show that \( u \) satisfies the same functional equation as \( A_d(x,y) \) from Proposition~\ref{prop202}, which would imply that \( [x^{n}][y^{k}] u = N_d(n,k) \) for every integer \( n, k \geq 0 \). To derive a functional equation for \( u \), we first observe that
\begin{equation}\label{thm201eq1}
\sum_{M \in \overline{\mathcal{M}}_d} x^{\topt(M)} y^{\lopt(M)} = 1 + xyu.
\end{equation}
To see this, note that if \( M \in \overline{\mathcal{M}}_d \) is a single indeterminate, then \( M \) contributes 
\[
x^{\topt(M)} y^{\lopt(M)} = x^0 y^0 = 1
\]
to the sum. Otherwise, \( M \in \overline{\mathcal{M}}_d' \ce \{ L(M') : M' \in \mathcal{M}_d \} \), and we have
\[
\sum_{M \in \overline{\mathcal{M}}_d'} x^{\topt(M)} y^{\lopt(M)} = \sum_{M' \in \mathcal{M}_d} x^{\topt(M') + 1} y^{\lopt(M') + 1} = xyu.  
\]
Now, given an integer \( \ell \geq 1 \), let \( \mathcal{M}_{d, \ell} \subseteq \mathcal{M}_d \) be the set of operator monomials that can be expressed as the product of \( \ell \) irreducible operator monomials. Notice that every \( M \in \mathcal{M}_d \) is contained in \( \mathcal{M}_{d, j(d-1) + 1} \) for a unique integer \( j \geq 0 \). Thus, \( \mathcal{M}_d \) is equal to the disjoint union \( \bigcup_{j \geq 0} \mathcal{M}_{d, j(d-1) + 1} \). 

Now, given $M \in \M_d$, if we write $M =M_1M_2 \cdots M_{j(d-1)+1}$ where $M_i$ is irreducible for every $i \in [j(d-1)+1]$, then
\begin{equation}\label{thm201eq2}
\topt(M) = j + \sum_{i=1}^{j(d-1)+1} \topt(M_i), \quad \lopt(M) = \sum_{i=1}^{j(d-1)+1} \lopt(M_i).
\end{equation}
Combining~\eqref{thm201eq1} and~\eqref{thm201eq2}, we see that, for every $j \geq 0$, we have
\[
\sum_{M \in \M_{d, j(d-1)+1}} x^{\topt(M)} y^{\lopt(M)} = x^j \left(1+xyu \right)^{j(d-1)+1}.
\]
Putting everything together, we obtain that
\begin{align*}
u &= \sum_{M \in \M_d} x^{\topt(M)}y^{\lopt(M)} \\
&= \sum_{j \geq 0} \sum_{M \in \M_{d, j(d-1)+1}} x^{\topt(M)}y^{\lopt(M)} \\
&= \sum_{j \geq 0} x^j \left( 1+xyu \right)^{j(d-1)+1} \\
&= \frac{1+xyu}{1- x(1+xyu)^{d-1}}.
\end{align*}
Next, observe that
\begin{align*}
u &= \frac{1+xyu}{1-x(1+xyu)^{d-1}},\\
u - xu(1+xyu)^{d-1} &= 1+xyu,\\
u &= 1+xyu + xu(1+xyu)^{d-1}, \\
u &= (1+xyu) \left( 1+xu (1+xyu)^{d-2}\right).
\end{align*}
Observe that  $u$ indeed satisfies the same functional equation~\eqref{prop202eq0} as $A_d(x,y)$. Thus, we obtain that  $[x^{n}][y^{k}] u = N_d(n,k)$, and our claim follows.
\end{proof}

It is obvious that if we set $y=1$ in $A_d(x,y)$, we obtain the generating function for our generalized Catalan numbers $C_d(n)$. Thus, Proposition~\ref{prop202} and the proof of Theorem~\ref{thm201} readily imply the following.

\begin{corollary}\label{cor202b}
Let $d \geq 2$ be an integer, and define the generating function $u \ce \sum_{n \geq 0} C_d(n) x^n$. Then $u$ satisfies the functional equation
\[
u = (1+xu) \left( 1+xu (1+xu)^{d-2}\right),
\]
or equivalently,
\[
u = \frac{1+xu}{1-x(1+xu)^{d-1}}.
\]
\end{corollary}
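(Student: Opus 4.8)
The plan is to derive Corollary~\ref{cor202b} as a direct specialization of Proposition~\ref{prop202} at $y = 1$, using the defining relation $C_d(n) = \sum_{k=0}^n N_d(n,k)$. The whole argument rests on the observation that the univariate generating function $u = \sum_{n \geq 0} C_d(n) x^n$ appearing in the corollary is exactly $A_d(x,1)$, so that the bivariate functional equation already established collapses to the claimed one-variable identity.

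First I would make the identification precise. Since
\[
A_d(x,y) = \sum_{n \geq 0} \sum_{k=0}^n N_d(n,k)\, x^n y^k,
\]
substituting $y = 1$ turns each inner sum into a row sum, giving
\[
A_d(x,1) = \sum_{n \geq 0} \left( \sum_{k=0}^n N_d(n,k) \right) x^n = \sum_{n \geq 0} C_d(n)\, x^n = u.
\]
This is the content of the ``obvious'' remark preceding the statement, and it lets me treat the corollary's $u$ as the evaluation $A_d(x,1)$.

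Next I would put $y = 1$ into the functional equation~\eqref{prop202eq0} of Proposition~\ref{prop202}. Because $A_d(x,1) = u$, the equation
\[
A_d(x,y) = \left( 1 + xy A_d(x,y) \right)\left( 1 + x A_d(x,y)\left( 1 + xy A_d(x,y) \right)^{d-2} \right)
\]
specializes immediately to
\[
u = (1 + xu)\left( 1 + xu (1+xu)^{d-2} \right),
\]
which is the first displayed identity of the corollary. For the second (equivalent) form, I would reuse the algebraic rearrangement recorded at the end of the proof of Theorem~\ref{thm201}: there the chain of equalities shows that $u = (1+xyu)(1 + xu(1+xyu)^{d-2})$ is equivalent to $u = \frac{1+xyu}{1 - x(1+xyu)^{d-1}}$, by expanding the product into $1 + xyu + xu(1+xyu)^{d-1}$ and solving for the $xu(1+xyu)^{d-1}$ term. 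Setting $y = 1$ throughout that derivation yields the equivalence of the two displayed forms. I expect no genuine obstacle in this proof: the only computation is the routine rearrangement already carried out for Theorem~\ref{thm201}, and the key identification $u = A_d(x,1)$ follows directly from the definition of $C_d(n)$ as a row sum of $N_d(n,k)$.
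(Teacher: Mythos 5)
Your proposal is correct and follows exactly the paper's route: the paper obtains the corollary by noting that setting $y=1$ in $A_d(x,y)$ yields the generating function for $C_d(n)$ and then invoking Proposition~\ref{prop202} together with the algebraic rearrangement at the end of the proof of Theorem~\ref{thm201}. You have simply spelled out the same specialization in more detail.
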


We next highlight a simple identity involving \( N_d(n,k) \).

\begin{proposition}\label{prop203}
Let \( d \geq 2 \) and \( n, k \geq 0 \) be integers. Then 
\[
N_2(n,k) \leq N_d(n,k) \leq N_2((n-k)(d-1) + k, k).
\]
\end{proposition}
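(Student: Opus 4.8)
The plan is to reduce both inequalities to the elementary fact that $\binom{a}{k}$ is nondecreasing in the upper index $a$ (for fixed $k$ and integers $a \ge k$). The starting point is the standard identity $\frac{1}{n+1}\binom{n+1}{k+1} = \frac{1}{k+1}\binom{n}{k}$, which lets me strip the normalizing factor from the definition of $N_d(n,k)$ and write
\[
N_d(n,k) = \frac{1}{k+1}\binom{n}{k}\binom{n + (n-k)(d-2)+1}{k}.
\]

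Next comes the key algebraic observation. Writing $m \ce (n-k)(d-1)+k$ for the first argument appearing on the right-hand side of the claimed inequality, a one-line computation (expanding $(n-k)(d-1) = (n-k)(d-2) + (n-k)$) shows that $m+1 = n + (n-k)(d-2)+1$. Hence the second binomial factor in $N_d(n,k)$ is exactly $\binom{m+1}{k}$, and the three quantities in the statement take the parallel forms
\[
N_2(n,k) = \frac{1}{k+1}\binom{n}{k}\binom{n+1}{k}, \quad
N_d(n,k) = \frac{1}{k+1}\binom{n}{k}\binom{m+1}{k}, \quad
N_2(m,k) = \frac{1}{k+1}\binom{m}{k}\binom{m+1}{k}.
\]

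With the expressions aligned this way, the common factors cancel and the chain collapses to two one-variable binomial comparisons: the lower bound $N_2(n,k) \le N_d(n,k)$ becomes $\binom{n+1}{k} \le \binom{m+1}{k}$, and the upper bound $N_d(n,k) \le N_2(m,k)$ becomes $\binom{n}{k} \le \binom{m}{k}$. Both follow from monotonicity once I check that $n \le m$, which holds because $d \ge 2$ and $n \ge k$ force $(n-k)(d-2) \ge 0$, whence $m = n + (n-k)(d-2) \ge n$. I would dispose of the degenerate range $k > n$ separately and trivially, since there $N_2(n,k) = N_d(n,k) = 0$.

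The argument is essentially routine; the only real content is spotting the identity $m+1 = n + (n-k)(d-2)+1$ that makes the middle quantity interpolate cleanly between the two Narayana values. Once this is noticed, no genuine obstacle remains --- the rest is the monotonicity of binomial coefficients in their upper index, together with the bookkeeping needed to keep the shared factors visible.
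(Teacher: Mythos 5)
Your proof is correct, but it takes the algebraic route that the paper explicitly declines: the authors remark that ``it is relatively straightforward to derive both inequalities algebraically from the definition'' and instead give a combinatorial argument. Their lower bound comes from an injection $\mathcal{M}_2 \to \mathcal{M}_d$ that replaces each indeterminate $a_i$ (for $i \ge 2$) of a binary operator monomial by a product of $d$ indeterminates, preserving $\topt$ and $\lopt$; their upper bound comes from reparsing a $d$-ary operator monomial as a binary one, which fixes $\lopt$ and sends $\topt = n$ to $(n-k)(d-1)+k$. Your computation --- rewriting $\frac{1}{n+1}\binom{n+1}{k+1} = \frac{1}{k+1}\binom{n}{k}$, observing that with $m \coloneqq (n-k)(d-1)+k$ one has $m+1 = n+(n-k)(d-2)+1$ so that the three quantities become $\frac{1}{k+1}\binom{n}{k}\binom{n+1}{k}$, $\frac{1}{k+1}\binom{n}{k}\binom{m+1}{k}$, $\frac{1}{k+1}\binom{m}{k}\binom{m+1}{k}$, and then invoking monotonicity of $\binom{a}{k}$ in $a$ together with $n \le m$ --- checks out and is arguably the shorter, more self-contained argument. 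What the paper's approach buys instead is structural insight: the two injections directly motivate the ``restrict''/``replicate'' dichotomy discussed immediately after the proposition, which organizes all the combinatorial interpretations in Sections 3 and 4. One small point: your dismissal of the range $k > n$ leaves $N_2(m,k)$ with a possibly negative first argument, so strictly speaking the statement (in either proof) should be read with the implicit restriction $k \le n$ that the paper also assumes.
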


\begin{proof}
While it is relatively straightforward to derive both inequalities algebraically from the definition of \( N_d(n,k) \), we present a combinatorial argument. We first prove \( N_2(n,k) \leq N_d(n,k) \) by establishing an injection from \( \mathcal{M}_2 \) to \( \mathcal{M}_d \). Given \( M \in \mathcal{M}_2 \), let \( a_1, \ldots, a_{\ell} \) be the indeterminates involved in \( M \). Let \( M' \) be the operator monomial obtained from \( M \) by replacing every instance of \( a_i \) with a product of \( d \) indeterminates \( a_{i1} a_{i2} \cdots a_{id} \) for every \( i \in \{2, \ldots, \ell\} \). Then \( M' \in \mathcal{M}_d \) with \( \topt(M') = \topt(M) \) and \( \lopt(M') = \lopt(M) \), and the inequality follows.

Next, we prove \( N_d(n,k) \leq N_2((n-k)(d-1) + k, k) \) with an injection from \( \mathcal{M}_d \) to \( \mathcal{M}_2 \). Let \( M \in \mathcal{M}_d \) where \( \topt(M) = n \) and \( \lopt(M) = k \). Then \( \deg(M) = 1 + (n-k)(d-1) \). Now let \( M' \in \mathcal{M}_2 \) be the same expression as \( M \), but parsed using a binary operation instead of a \( d \)-ary operation. Then \( \lopt(M') = \lopt(M) = k \), and \( \topt(M') = (n-k)(d-1) + k \). This completes the proof.
\end{proof}

Proposition~\ref{prop203} offers two avenues for finding combinatorial interpretations of \( N_d(n,k) \). First, we can start with a set of combinatorial objects with cardinality \( N_2((n-k)(d-1) + k, k) \), and then \emph{restrict} it to a particular subset of size \( N_d(n,k) \). All four combinatorial interpretations we provide for \( N_d(n,k) \) in Section~\ref{sec3} belong to this category.

Alternatively, one could begin with a set of size \( N_2(n,k) \) and then \emph{replicate} its elements (e.g., by introducing labels to differentiate between them) to create an expanded set of size \( N_d(n,k) \). Three of the four combinatorial interpretations discussed in Section~\ref{sec4} (see Theorems~\ref{thm402}, \ref{thm403}, and \ref{thm404} in particular) can be viewed from this perspective.

Since the next two sections will involve several sets of combinatorial objects and mappings between them, we have organized how the bijections \( f_1, \ldots, f_8 \) relate to these combinatorial objects in Figure~\ref{fig201} for ease of reference. These objects and the bijections will be fully defined in Sections~\ref{sec3} and~\ref{sec4}.

\begin{center}
\begin{figure}[htbp]
\begin{tikzpicture}[xscale=2, yscale=1, >=latex', main node/.style={font=\normalsize}, word node/.style={font=\scriptsize}]


\def\x{0}
\node[main node] at (0,0) (M) {$\M_d$};
\node[main node] at (-1,1) (S) {$\S_d$};
\node[main node] at (-1,-1) (S') {$\widetilde{\S}_d$};
\node[main node] at (1,1) (T) {$\T_d$};
\node[main node] at (2,1) (Q) {$\Q_d$};
\node[main node] at (3,1) (P) {$\P_d$};
\node[main node] at (1,-1) (F) {$\F_d$};
\node[main node] at (2,-1) (Q') {$\widetilde{\Q}_d$};
\node[main node] at (3,-1) (T') {$\widetilde{\T}_d$};

\draw[->] (M) -- node[midway, above] {$f_1$} (S);
\draw[->] (M) -- node[midway, above] {$f_5$} (S');
\draw[->] (M) -- node[midway, above] {$f_2$} (T);
\draw[->] (T) -- node[midway, above] {$f_3$} (Q);
\draw[->] (Q) -- node[midway, above] {$f_4$} (P);
\draw[->] (M) -- node[midway, above] {$f_6$} (F);
\draw[->] (F) -- node[midway, above] {$f_7$} (Q');
\draw[->] (Q') -- node[midway, above] {$f_8$} (T');

\end{tikzpicture}
\caption{Relating combinatorial objects and associated bijections}\label{fig201}
\end{figure}
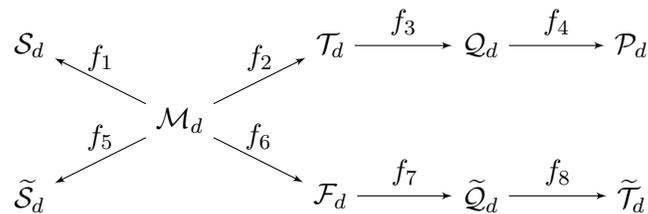
\end{center}

\section{Several ``restricted'' combinatorial interpretations of $N_d(n,k)$}\label{sec3}

In this section, we provide four sets of combinatorial objects that are counted by \( N_d(n,k) \) by establishing bijections among these sets and the \( d \)-ary operator monomials \( \mathcal{M}_d \).

\subsection{Schr\"{o}der paths with restricted horizontal steps}

A \emph{Schr\"{o}der path} is a lattice path that begins at \( (0,0) \), consists only of up steps \( U = (1,1) \), down steps \( D = (1,-1) \), and horizontal steps \( H = (2,0) \), remains on or above the \( x \)-axis, and ends on the \( x \)-axis. Observe that a Schr\"{o}der path \( P \) must end at \( (2n,0) \) for some non-negative integer \( n \), which is often referred to as the \emph{semilength} of \(P\) and denoted \( |P| \). Additionally, given an up step \( U \) in a Schr\"{o}der path that goes from \( y=\ell \) to \( y=\ell+1 \), we say that the \emph{matching} down step of \(U\) is the first instance of a down step \( D \) occurring after \(U\)  which goes from \( y=\ell+1 \) to \( y=\ell \). For example, for the Schr\"{o}der path
\[
P \ce U_1U_2U_3HD_1U_4HHD_2HD_3D_4HU_5D_5
\]
(where each up and down step is labelled by their order of occurrence), the matching down steps of \(U_1\), \(U_2\), \(U_3\), \(U_4\), and \(U_5\) are \(D_4\), \(D_3\), \(D_1\), \(D_2\), and \(D_5\) respectively.

Next, let \( \mathcal{S}_d \) be the set of nonempty Schr\"{o}der paths such that:
\begin{itemize}
    \item the total number of \( H \) steps in the path is equal to \( 1 + j(d-1) \) for some integer \( j \geq 0 \);
    \item between every up step \( U \) and its matching down step \( D \), the number of \( H \) steps between \( U \) and \( D \) is equal to \( 1 + j(d-1) \) for some integer \( j \geq 0 \).
\end{itemize}

We describe a simple bijection \( f_1 : \mathcal{M}_d \to \mathcal{S}_d \) defined as follows. Given \( M \in \mathcal{M}_d \), we scan across the expression \( M \) from left to right and:
\begin{itemize}
    \item replace every instance of \( L( \) with an up step \( U \);
    \item replace every indeterminate \( a_i \) with a horizontal step \( H \);
    \item replace every instance of \( ) \) with a down step \( D \).
\end{itemize}

For example, given \( M = L(a_1 L^2(a_2) a_3) a_4 a_5 \in \mathcal{M}_3 \), we have
\[
f_1(M) = UHUUHDDHDHH,
\]
which is indeed in \( \mathcal{S}_3 \). Figure~\ref{fig301} illustrates the mapping \( f_1 \) for all 21 monomials \( M \in \mathcal{M}_3 \) with \( \topt(M) = 3 \).

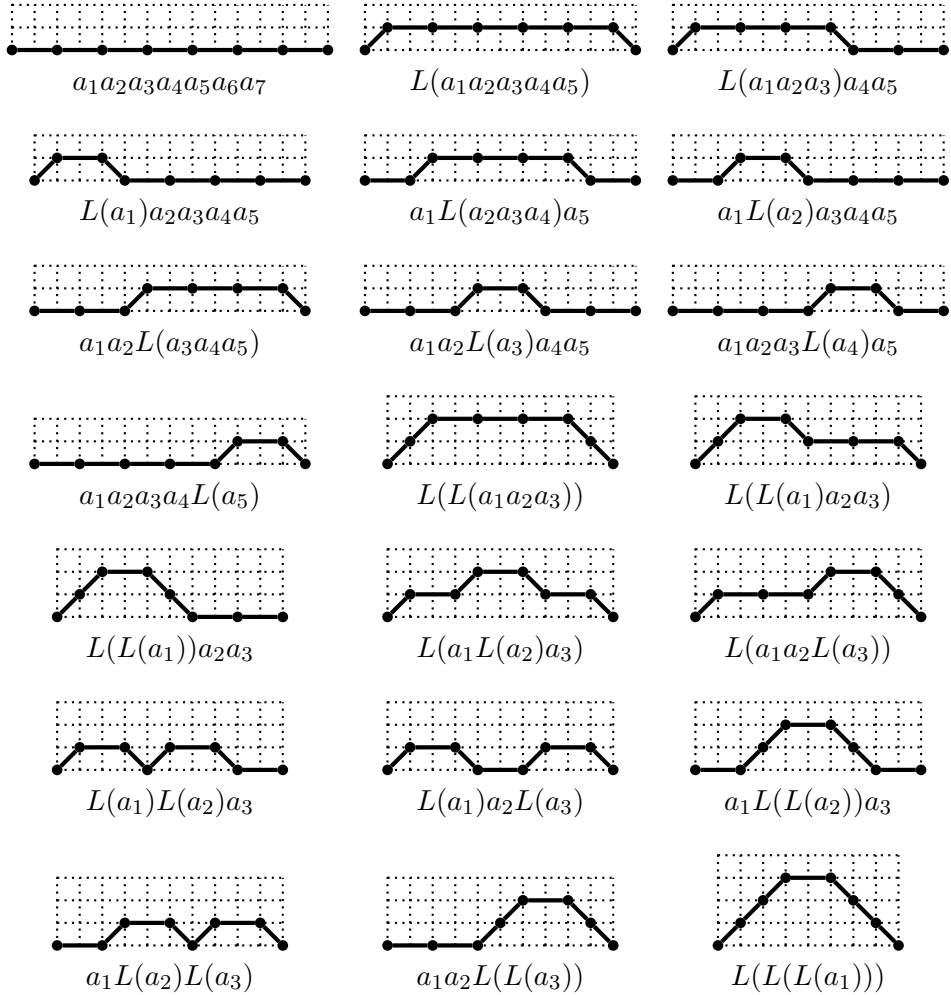
\begin{figure}[htbp]
\centering
\def\sc{0.3}
\def\grid{\foreach \x in {\xlb ,...,\xub}
{    \ifthenelse{\NOT 0 = \x}{\draw[thick](\x ,-1pt) -- (\x ,1pt);}{}
\draw[dotted, thick](\x,\ylb- \buf) -- (\x,\yub + \buf);}
\foreach \y in {\ylb ,...,\yub}
{    \ifthenelse{\NOT 0 = \y}{\draw[thick](-1pt, \y) -- (1pt, \y);}{}
\draw[dotted, thick](\xlb- \buf, \y) -- (\xub + \buf, \y);}
}

\begin{tabular}{ccc}
\begin{tikzpicture}[scale = \sc, font=\scriptsize\sffamily, thick,main node/.style={circle,inner sep=0.4mm,draw, fill}]
\def\xlb{0}; \def\xub{14}; \def\ylb{0}; \def\yub{2}; \def\buf{0}; \grid
\node[main node] at (0,0) (0) {};
\node[main node] at (2,0) (1) {};
\node[main node] at (4,0) (2) {};
\node[main node] at (6,0) (3) {};
\node[main node] at (8,0) (4) {};
\node[main node] at (10,0) (5) {};
\node[main node] at (12,0) (6) {};
\node[main node] at (14,0) (7) {};
\draw[ultra thick] (0) -- (1) -- (2) -- (3)-- (4)-- (5)-- (6)-- (7);
\end{tikzpicture}
&

\begin{tikzpicture}[scale = \sc, font=\scriptsize\sffamily, thick,main node/.style={circle,inner sep=0.4mm,draw, fill}]
\def\xlb{0}; \def\xub{12}; \def\ylb{0}; \def\yub{2}; \def\buf{0}; \grid
\node[main node] at (0,0) (0) {};
\node[main node] at (1,1) (1) {};
\node[main node] at (3,1) (2) {};
\node[main node] at (5,1) (3) {};
\node[main node] at (7,1) (4) {};
\node[main node] at (9,1) (5) {};
\node[main node] at (11,1) (6) {};
\node[main node] at (12,0) (7) {};
\draw[ultra thick] (0) -- (1) -- (2) -- (3)-- (4)-- (5)-- (6)-- (7);

\end{tikzpicture}
&
\begin{tikzpicture}[scale = \sc, font=\scriptsize\sffamily, thick,main node/.style={circle,inner sep=0.4mm,draw, fill}]
\def\xlb{0}; \def\xub{12}; \def\ylb{0}; \def\yub{2}; \def\buf{0}; \grid
\node[main node] at (0,0) (0) {};
\node[main node] at (1,1) (1) {};
\node[main node] at (3,1) (2) {};
\node[main node] at (5,1) (3) {};
\node[main node] at (7,1) (4) {};
\node[main node] at (8,0) (5) {};
\node[main node] at (10,0) (6) {};
\node[main node] at (12,0) (7) {};
\draw[ultra thick] (0) -- (1) -- (2) -- (3)-- (4)-- (5)-- (6)-- (7);
\end{tikzpicture}\\

$a_1a_2a_3a_4a_5a_6a_7$ &
$L(a_1a_2a_3a_4a_5)$ &
$L(a_1a_2a_3)a_4a_5$\\
\\

\begin{tikzpicture}[scale = \sc, font=\scriptsize\sffamily, thick,main node/.style={circle,inner sep=0.4mm,draw, fill}]
\def\xlb{0}; \def\xub{12}; \def\ylb{0}; \def\yub{2}; \def\buf{0}; \grid
\node[main node] at (0,0) (0) {};
\node[main node] at (1,1) (1) {};
\node[main node] at (3,1) (2) {};
\node[main node] at (4,0) (3) {};
\node[main node] at (6,0) (4) {};
\node[main node] at (8,0) (5) {};
\node[main node] at (10,0) (6) {};
\node[main node] at (12,0) (7) {};
\draw[ultra thick] (0) -- (1) -- (2) -- (3)-- (4)-- (5)-- (6)-- (7);
\end{tikzpicture}
&
\begin{tikzpicture}[scale = \sc, font=\scriptsize\sffamily, thick,main node/.style={circle,inner sep=0.4mm,draw, fill}]
\def\xlb{0}; \def\xub{12}; \def\ylb{0}; \def\yub{2}; \def\buf{0}; \grid
\node[main node] at (0,0) (0) {};
\node[main node] at (2,0) (1) {};
\node[main node] at (3,1) (2) {};
\node[main node] at (5,1) (3) {};
\node[main node] at (7,1) (4) {};
\node[main node] at (9,1) (5) {};
\node[main node] at (10,0) (6) {};
\node[main node] at (12,0) (7) {};
\draw[ultra thick] (0) -- (1) -- (2) -- (3)-- (4)-- (5)-- (6)-- (7);
\end{tikzpicture}
&
\begin{tikzpicture}[scale = \sc, font=\scriptsize\sffamily, thick,main node/.style={circle,inner sep=0.4mm,draw, fill}]
\def\xlb{0}; \def\xub{12}; \def\ylb{0}; \def\yub{2}; \def\buf{0}; \grid
\node[main node] at (0,0) (0) {};
\node[main node] at (2,0) (1) {};
\node[main node] at (3,1) (2) {};
\node[main node] at (5,1) (3) {};
\node[main node] at (6,0) (4) {};
\node[main node] at (8,0) (5) {};
\node[main node] at (10,0) (6) {};
\node[main node] at (12,0) (7) {};
\draw[ultra thick] (0) -- (1) -- (2) -- (3)-- (4)-- (5)-- (6)-- (7);
\end{tikzpicture}
\\

$L(a_1)a_2a_3a_4a_5$&
$a_1L(a_2a_3a_4)a_5$&
$ a_1L(a_2)a_3a_4a_5$\\
\\

\begin{tikzpicture}[scale = \sc, font=\scriptsize\sffamily, thick,main node/.style={circle,inner sep=0.4mm,draw, fill}]
\def\xlb{0}; \def\xub{12}; \def\ylb{0}; \def\yub{2}; \def\buf{0}; \grid
\node[main node] at (0,0) (0) {};
\node[main node] at (2,0) (1) {};
\node[main node] at (4,0) (2) {};
\node[main node] at (5,1) (3) {};
\node[main node] at (7,1) (4) {};
\node[main node] at (9,1) (5) {};
\node[main node] at (11,1) (6) {};
\node[main node] at (12,0) (7) {};
\draw[ultra thick] (0) -- (1) -- (2) -- (3)-- (4)-- (5)-- (6)-- (7);
\end{tikzpicture}
&
\begin{tikzpicture}[scale = \sc, font=\scriptsize\sffamily, thick,main node/.style={circle,inner sep=0.4mm,draw, fill}]
\def\xlb{0}; \def\xub{12}; \def\ylb{0}; \def\yub{2}; \def\buf{0}; \grid
\node[main node] at (0,0) (0) {};
\node[main node] at (2,0) (1) {};
\node[main node] at (4,0) (2) {};
\node[main node] at (5,1) (3) {};
\node[main node] at (7,1) (4) {};
\node[main node] at (8,0) (5) {};
\node[main node] at (10,0) (6) {};
\node[main node] at (12,0) (7) {};
\draw[ultra thick] (0) -- (1) -- (2) -- (3)-- (4)-- (5)-- (6)-- (7);
\end{tikzpicture}
&
\begin{tikzpicture}[scale = \sc, font=\scriptsize\sffamily, thick,main node/.style={circle,inner sep=0.4mm,draw, fill}]
\def\xlb{0}; \def\xub{12}; \def\ylb{0}; \def\yub{2}; \def\buf{0}; \grid
\node[main node] at (0,0) (0) {};
\node[main node] at (2,0) (1) {};
\node[main node] at (4,0) (2) {};
\node[main node] at (6,0) (3) {};
\node[main node] at (7,1) (4) {};
\node[main node] at (9,1) (5) {};
\node[main node] at (10,0) (6) {};
\node[main node] at (12,0) (7) {};
\draw[ultra thick] (0) -- (1) -- (2) -- (3)-- (4)-- (5)-- (6)-- (7);
\end{tikzpicture}
\\
$  a_1a_2L(a_3a_4a_5)$&
$  a_1a_2L(a_3)a_4a_5$&
$a_1a_2a_3L(a_4)a_5$\\
\\

\begin{tikzpicture}[scale = \sc, font=\scriptsize\sffamily, thick,main node/.style={circle,inner sep=0.4mm,draw, fill}]
\def\xlb{0}; \def\xub{12}; \def\ylb{0}; \def\yub{2}; \def\buf{0}; \grid
\node[main node] at (0,0) (0) {};
\node[main node] at (2,0) (1) {};
\node[main node] at (4,0) (2) {};
\node[main node] at (6,0) (3) {};
\node[main node] at (8,0) (4) {};
\node[main node] at (9,1) (5) {};
\node[main node] at (11,1) (6) {};
\node[main node] at (12,0) (7) {};
\draw[ultra thick] (0) -- (1) -- (2) -- (3)-- (4)-- (5)-- (6)-- (7);
\end{tikzpicture}
&
\begin{tikzpicture}[scale = \sc, font=\scriptsize\sffamily, thick,main node/.style={circle,inner sep=0.4mm,draw, fill}]
\def\xlb{0}; \def\xub{10}; \def\ylb{0}; \def\yub{3}; \def\buf{0}; \grid
\node[main node] at (0,0) (0) {};
\node[main node] at (1,1) (1) {};
\node[main node] at (2,2) (2) {};
\node[main node] at (4,2) (3) {};
\node[main node] at (6,2) (4) {};
\node[main node] at (8,2) (5) {};
\node[main node] at (9,1) (6) {};
\node[main node] at (10,0) (7) {};
\draw[ultra thick] (0) -- (1) -- (2) -- (3)-- (4)-- (5)-- (6)-- (7);
\end{tikzpicture}
&
\begin{tikzpicture}[scale = \sc, font=\scriptsize\sffamily, thick,main node/.style={circle,inner sep=0.4mm,draw, fill}]
\def\xlb{0}; \def\xub{10}; \def\ylb{0}; \def\yub{3}; \def\buf{0}; \grid
\node[main node] at (0,0) (0) {};
\node[main node] at (1,1) (1) {};
\node[main node] at (2,2) (2) {};
\node[main node] at (4,2) (3) {};
\node[main node] at (5,1) (4) {};
\node[main node] at (7,1) (5) {};
\node[main node] at (9,1) (6) {};
\node[main node] at (10,0) (7) {};
\draw[ultra thick] (0) -- (1) -- (2) -- (3)-- (4)-- (5)-- (6)-- (7);\end{tikzpicture}
\\
$a_1a_2a_3a_4L(a_5)$&
$ L(L(a_1a_2a_3))$&
$ L(L(a_1)a_2a_3)$ \\
\\

\begin{tikzpicture}[scale = \sc, font=\scriptsize\sffamily, thick,main node/.style={circle,inner sep=0.4mm,draw, fill}]
\def\xlb{0}; \def\xub{10}; \def\ylb{0}; \def\yub{3}; \def\buf{0}; \grid
\node[main node] at (0,0) (0) {};
\node[main node] at (1,1) (1) {};
\node[main node] at (2,2) (2) {};
\node[main node] at (4,2) (3) {};
\node[main node] at (5,1) (4) {};
\node[main node] at (6,0) (5) {};
\node[main node] at (8,0) (6) {};
\node[main node] at (10,0) (7) {};
\draw[ultra thick] (0) -- (1) -- (2) -- (3)-- (4)-- (5)-- (6)-- (7);\end{tikzpicture}
&

\begin{tikzpicture}[scale = \sc, font=\scriptsize\sffamily, thick,main node/.style={circle,inner sep=0.4mm,draw, fill}]
\def\xlb{0}; \def\xub{10}; \def\ylb{0}; \def\yub{3}; \def\buf{0}; \grid
\node[main node] at (0,0) (0) {};
\node[main node] at (1,1) (1) {};
\node[main node] at (3,1) (2) {};
\node[main node] at (4,2) (3) {};
\node[main node] at (6,2) (4) {};
\node[main node] at (7,1) (5) {};
\node[main node] at (9,1) (6) {};
\node[main node] at (10,0) (7) {};
\draw[ultra thick] (0) -- (1) -- (2) -- (3)-- (4)-- (5)-- (6)-- (7);
\end{tikzpicture}
&

\begin{tikzpicture}[scale = \sc, font=\scriptsize\sffamily, thick,main node/.style={circle,inner sep=0.4mm,draw, fill}]
\def\xlb{0}; \def\xub{10}; \def\ylb{0}; \def\yub{3}; \def\buf{0}; \grid
\node[main node] at (0,0) (0) {};
\node[main node] at (1,1) (1) {};
\node[main node] at (3,1) (2) {};
\node[main node] at (5,1) (3) {};
\node[main node] at (6,2) (4) {};
\node[main node] at (8,2) (5) {};
\node[main node] at (9,1) (6) {};
\node[main node] at (10,0) (7) {};
\draw[ultra thick] (0) -- (1) -- (2) -- (3)-- (4)-- (5)-- (6)-- (7);\end{tikzpicture}
\\

$ L(L(a_1))a_2a_3 $&
$ L(a_1L(a_2)a_3)$&
$ L(a_1a_2L(a_3)) $\\
  \\
  
\begin{tikzpicture}[scale = \sc, font=\scriptsize\sffamily, thick,main node/.style={circle,inner sep=0.4mm,draw, fill}]
\def\xlb{0}; \def\xub{10}; \def\ylb{0}; \def\yub{3}; \def\buf{0}; \grid
\node[main node] at (0,0) (0) {};
\node[main node] at (1,1) (1) {};
\node[main node] at (3,1) (2) {};
\node[main node] at (4,0) (3) {};
\node[main node] at (5,1) (4) {};
\node[main node] at (7,1) (5) {};
\node[main node] at (8,0) (6) {};
\node[main node] at (10,0) (7) {};
\draw[ultra thick] (0) -- (1) -- (2) -- (3)-- (4)-- (5)-- (6)-- (7);\end{tikzpicture}
&

\begin{tikzpicture}[scale = \sc, font=\scriptsize\sffamily, thick,main node/.style={circle,inner sep=0.4mm,draw, fill}]
\def\xlb{0}; \def\xub{10}; \def\ylb{0}; \def\yub{3}; \def\buf{0}; \grid
\node[main node] at (0,0) (0) {};
\node[main node] at (1,1) (1) {};
\node[main node] at (3,1) (2) {};
\node[main node] at (4,0) (3) {};
\node[main node] at (6,0) (4) {};
\node[main node] at (7,1) (5) {};
\node[main node] at (9,1) (6) {};
\node[main node] at (10,0) (7) {};
\draw[ultra thick] (0) -- (1) -- (2) -- (3)-- (4)-- (5)-- (6)-- (7);\end{tikzpicture}
&

\begin{tikzpicture}[scale = \sc, font=\scriptsize\sffamily, thick,main node/.style={circle,inner sep=0.4mm,draw, fill}]
\def\xlb{0}; \def\xub{10}; \def\ylb{0}; \def\yub{3}; \def\buf{0}; \grid
\node[main node] at (0,0) (0) {};
\node[main node] at (2,0) (1) {};
\node[main node] at (3,1) (2) {};
\node[main node] at (4,2) (3) {};
\node[main node] at (6,2) (4) {};
\node[main node] at (7,1) (5) {};
\node[main node] at (8,0) (6) {};
\node[main node] at (10,0) (7) {};
\draw[ultra thick] (0) -- (1) -- (2) -- (3)-- (4)-- (5)-- (6)-- (7);\end{tikzpicture}
\\
$ L(a_1)L(a_2)a_3 $&
$L(a_1)a_2L(a_3) $&
$a_1L(L(a_2))a_3 $\\
\\

\begin{tikzpicture}[scale = \sc, font=\scriptsize\sffamily, thick,main node/.style={circle,inner sep=0.4mm,draw, fill}]
\def\xlb{0}; \def\xub{10}; \def\ylb{0}; \def\yub{3}; \def\buf{0}; \grid
\node[main node] at (0,0) (0) {};
\node[main node] at (2,0) (1) {};
\node[main node] at (3,1) (2) {};
\node[main node] at (5,1) (3) {};
\node[main node] at (6,0) (4) {};
\node[main node] at (7,1) (5) {};
\node[main node] at (9,1) (6) {};
\node[main node] at (10,0) (7) {};
\draw[ultra thick] (0) -- (1) -- (2) -- (3)-- (4)-- (5)-- (6)-- (7);\end{tikzpicture}
&

\begin{tikzpicture}[scale = \sc, font=\scriptsize\sffamily, thick,main node/.style={circle,inner sep=0.4mm,draw, fill}]
\def\xlb{0}; \def\xub{10}; \def\ylb{0}; \def\yub{3}; \def\buf{0}; \grid
\node[main node] at (0,0) (0) {};
\node[main node] at (2,0) (1) {};
\node[main node] at (4,0) (2) {};
\node[main node] at (5,1) (3) {};
\node[main node] at (6,2) (4) {};
\node[main node] at (8,2) (5) {};
\node[main node] at (9,1) (6) {};
\node[main node] at (10,0) (7) {};
\draw[ultra thick] (0) -- (1) -- (2) -- (3)-- (4)-- (5)-- (6)-- (7);\end{tikzpicture}
&

\begin{tikzpicture}[scale = \sc, font=\scriptsize\sffamily, thick,main node/.style={circle,inner sep=0.4mm,draw, fill}]
\def\xlb{0}; \def\xub{8}; \def\ylb{0}; \def\yub{4}; \def\buf{0}; \grid
\node[main node] at (0,0) (0) {};
\node[main node] at (1,1) (1) {};
\node[main node] at (2,2) (2) {};
\node[main node] at (3,3) (3) {};
\node[main node] at (5,3) (4) {};
\node[main node] at (6,2) (5) {};
\node[main node] at (7,1) (6) {};
\node[main node] at (8,0) (7) {};
\draw[ultra thick] (0) -- (1) -- (2) -- (3)-- (4)-- (5)-- (6)-- (7);
\end{tikzpicture}
\\
$a_1L(a_2)L(a_3) $&
$ a_1a_2L(L(a_3)) $&
$L(L(L(a_1)))$
\end{tabular}
\caption{Illustrating the bijection $f_1$ for all $M \in \M_3$ with $\topt(M)=3$}\label{fig301}
\end{figure}
Then we have the following.

\begin{theorem}\label{thm301}
Let \( d \geq 2 \) and \( n, k \geq 0 \) be integers. Then \( N_d(n,k) \) counts the number of Schr\"{o}der paths in \( \mathcal{S}_d \) with semilength \( (n-k)(d-1) + k+1 \) and exactly \( k \) up steps.
\end{theorem}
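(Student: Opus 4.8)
The plan is to prove that the explicitly described map $f_1 \colon \M_d \to \S_d$ is a bijection that carries the statistic pair $(\lopt, \text{semilength})$ in the manner claimed, and then to invoke Theorem~\ref{thm201}. First I would record the statistic correspondence. Under $f_1$ each symbol $L($ becomes an up step $U$ and its matching $)$ becomes a down step $D$, so the number of up steps equals $\lopt(M)$; meanwhile each indeterminate becomes a horizontal step $H$, so the number of $H$ steps equals $\deg(M)$. Since the path returns to height $0$, its final $x$-coordinate is $\#U + \#D + 2\#H = 2\lopt(M) + 2\deg(M)$, so the semilength is $\lopt(M) + \deg(M)$. Using \eqref{eqdegM} with $\topt(M) = n$ and $\lopt(M) = k$, this equals $k + (n-k)(d-1) + 1$, exactly as required. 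Thus $f_1$ sends the monomials counted by $N_d(n,k)$ (via Theorem~\ref{thm201}) to precisely the paths in the statement, and it remains only to establish that $f_1$ is a bijection.

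For well-definedness I would first check that $f_1(M)$ is a genuine Schr\"oder path: the parentheses of $M$ are balanced, so the $U$ and $D$ steps match and the path returns to the $x$-axis, while the height after any prefix equals the number of currently open $L($ symbols and is therefore nonnegative. The two defining conditions of $\S_d$ then become degree conditions. The total number of $H$ steps is $\deg(M) \equiv 1 \pmod{d-1}$ by \eqref{eqdegM}; and the steps strictly between an up step and its matching down step are exactly those produced by the subexpression $M'$ sitting inside the corresponding factor $L(M')$, whose count is $\deg(M') \equiv 1 \pmod{d-1}$ for the same reason. Hence $f_1(M) \in \S_d$.

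The substance of the argument—and the step I expect to be the main obstacle—is showing that the evident reverse substitution ($U \mapsto L($, $D \mapsto )$, and the $H$ steps filled in by $a_1, a_2, \dots$ in order) always produces a \emph{valid} operator monomial, so that it furnishes a two-sided inverse to $f_1$. The delicate point is that validity of a $d$-ary monomial demands that every product have a number of factors congruent to $1 \pmod{d-1}$, whereas the conditions defining $\S_d$ only constrain counts of $H$ steps. I would resolve this by strong induction on the number of up steps. A path in $\S_d$ decomposes uniquely at ground level into a sequence of $m$ pieces, each either a single $H$ step or an arch $U Q D$ with $Q$ returning to its base height; the base case $m = \deg$, no up steps, gives $a_1 \cdots a_\ell$ with $\ell \equiv 1 \pmod{d-1}$. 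By the second condition on $\S_d$, each such $Q$ again satisfies the $\S_d$ conditions and has strictly fewer up steps, so by induction it reverse-maps to a valid operator monomial $M'$ and the arch reverse-maps to the irreducible monomial $L(M')$. Each resulting factor $M_i$ then satisfies $\deg(M_i) \equiv 1 \pmod{d-1}$ by \eqref{eqdegM}, whence the total $H$-count obeys $\deg = \sum_{i=1}^{m} \deg(M_i) \equiv m \pmod{d-1}$; combined with the first condition $\deg \equiv 1 \pmod{d-1}$, this forces $m \equiv 1 \pmod{d-1}$, which is exactly the arity requirement making the top-level product $M_1 \cdots M_m$ a legitimate $d$-ary monomial. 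This reverse substitution is visibly inverse to $f_1$, so $f_1$ is a bijection and the theorem follows from Theorem~\ref{thm201}.
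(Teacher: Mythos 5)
Your proposal is correct and follows essentially the same route as the paper: the same map $f_1$, the same statistic correspondence $\#U = \lopt(M)$, $\#H = \deg(M)$, semilength $= \deg(M)+\lopt(M)$, and the same verification that $f_1(M)\in\S_d$ via \eqref{eqdegM}. The only difference is that the paper dismisses surjectivity with ``$f_1$ is clearly invertible,'' whereas you correctly identify the one nontrivial point --- that the reverse substitution yields a \emph{valid} $d$-ary monomial --- and settle it by the ground-level arch decomposition together with the congruence $\sum_i \deg(M_i)\equiv m \equiv 1 \pmod{d-1}$; this is a sound and welcome filling-in of the detail the paper omits.
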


\begin{proof}
Given \( M \in \mathcal{M}_d \), observe that \( f_1(M) \) is indeed a Schr\"{o}der path --- it contains only \( U \), \( H \), and \( D \) steps with an equal number of \( U \) and \( D \) steps, and it cannot contain a prefix with more \( D \) steps than \( U \) steps. Additionally, every monomial has a positive degree that is congruent to \( 1 \) modulo $d-1$, which implies that the total number of horizontal steps in \( f_1(M) \) is equal to \( 1 + j(d-1) \) for some integer \( j \geq 0 \). 

Furthermore, since the argument of an instance of \( L \) in \( M \) is a monomial in its own right, and the opening and closing parentheses of \( L \) are mapped to a pair of matching up and down steps in \( f_1(M) \), we see that the number of horizontal steps between them must also be equal to \( 1 + j(d-1) \) for some \( j \geq 0 \). Hence, we conclude that \( f_1(M) \in \mathcal{S}_d \).

Next, observe that \( |f_1(M)| = \deg(M) + \lopt(M) \), and there are exactly \( \lopt(M) \) up steps in \( f_1(M) \). The mapping \( f_1 \) is also clearly invertible. Thus, our claim follows.
\end{proof}

Given a Schr\"{o}der path, we call an instance of \( UD \) (i.e., an up step immediately followed by a down step) a \emph{peak} in the path. Notice that \( \mathcal{S}_2 \) is simply the set of nonempty Schr\"{o}der paths with no peaks, which is known to be counted by the Catalan numbers~\cite[Chapter 2, Exercise 45]{Stanley15}.

\subsection{Ordered trees with restricted outdegrees}

We next show that \( N_d(n,k) \) also counts a family of ordered trees  (i.e., trees where the order of children at each node matters). Given an ordered tree \( T \), the \emph{outdegree} of a vertex \( v \in T \) is the number of children of \( v \) in \( T \). A vertex is considered a \emph{leaf} if it has an outdegree of zero; otherwise, it is classified as an \emph{internal node}. 

Next, for an integer \( d \geq 2 \), let \( \mathcal{T}_d \) denote the set of unlabelled ordered trees that contain at least one internal node, where the outdegree of every internal node is congruent to \( 1 \) modulo $d-1$. For instance, Figure~\ref{fig302} displays the 12 elements in \( \mathcal{T}_3 \) with 5 edges.

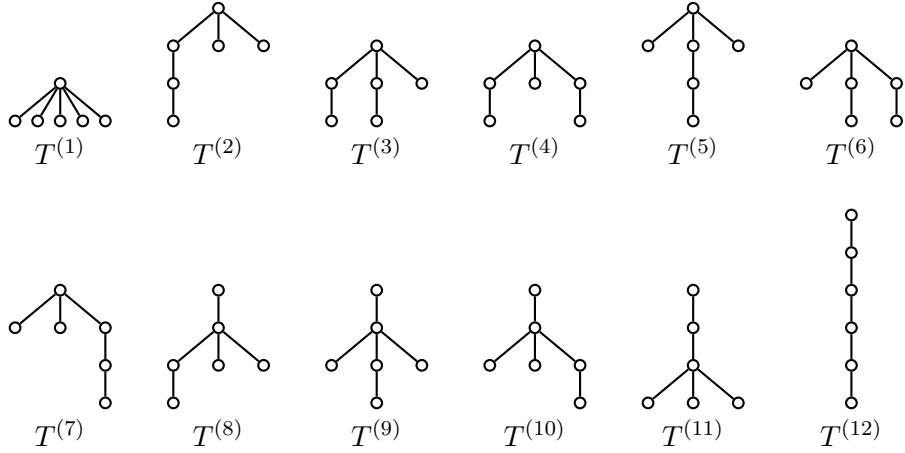
\begin{figure}[htbp]
\centering
\def\sc{0.3}
\def\ysc{0.5}
\begin{tabular}{cccccc}

\begin{tikzpicture}[xscale=\sc, yscale = \ysc,thick,main node/.style={circle,inner sep=0.5mm,draw,font=\small\sffamily}]
\node[main node] at (2,2) (0) {};
\node[main node] at (0,1) (1) {};
\node[main node] at (1,1) (2) {};
\node[main node] at (2,1) (3) {};
\node[main node] at (3,1) (4) {};
\node[main node] at (4,1) (5) {};

 \path[every node/.style={font=\sffamily}]
(0) edge (1) (0) edge (2) (0) edge (3) (0) edge (4) (0) edge (5);
\end{tikzpicture}

&\quad

\begin{tikzpicture}[xscale=\sc, yscale = \ysc,thick,main node/.style={circle,inner sep=0.5mm,draw,font=\small\sffamily}]
\node[main node] at (2,2) (0) {};
\node[main node] at (0,1) (1) {};
\node[main node] at (2,1) (2) {};
\node[main node] at (4,1) (3) {};
\node[main node] at (0,0) (4) {};
\node[main node] at (0,-1) (5) {};

 \path[every node/.style={font=\sffamily}]
(0) edge (1) (0) edge (2) (0) edge (3) (1) edge (4) (4) edge (5);
\end{tikzpicture}

&\quad
\begin{tikzpicture}[xscale=\sc, yscale = \ysc,thick,main node/.style={circle,inner sep=0.5mm,draw,font=\small\sffamily}]
\node[main node] at (2,2) (0) {};
\node[main node] at (0,1) (1) {};
\node[main node] at (2,1) (2) {};
\node[main node] at (4,1) (3) {};
\node[main node] at (0,0) (4) {};
\node[main node] at (2,0) (5) {};

 \path[every node/.style={font=\sffamily}]
(0) edge (1) (0) edge (2) (0) edge (3) (1) edge (4) (2) edge (5);
\end{tikzpicture}

&\quad
\begin{tikzpicture}[xscale=\sc, yscale = \ysc,thick,main node/.style={circle,inner sep=0.5mm,draw,font=\small\sffamily}]
\node[main node] at (2,2) (0) {};
\node[main node] at (0,1) (1) {};
\node[main node] at (2,1) (2) {};
\node[main node] at (4,1) (3) {};
\node[main node] at (0,0) (4) {};
\node[main node] at (4,0) (5) {};

 \path[every node/.style={font=\sffamily}]
(0) edge (1) (0) edge (2) (0) edge (3) (1) edge (4) (3) edge (5);
\end{tikzpicture}

&\quad
\begin{tikzpicture}[xscale=\sc, yscale = \ysc,thick,main node/.style={circle,inner sep=0.5mm,draw,font=\small\sffamily}]
\node[main node] at (2,2) (0) {};
\node[main node] at (0,1) (1) {};
\node[main node] at (2,1) (2) {};
\node[main node] at (4,1) (3) {};
\node[main node] at (2,0) (4) {};
\node[main node] at (2,-1) (5) {};

 \path[every node/.style={font=\sffamily}]
(0) edge (1) (0) edge (2) (0) edge (3)  (2) edge (4) (4) edge (5);
\end{tikzpicture}

&\quad
\begin{tikzpicture}[xscale=\sc, yscale = \ysc,thick,main node/.style={circle,inner sep=0.5mm,draw,font=\small\sffamily}]
\node[main node] at (2,2) (0) {};
\node[main node] at (0,1) (1) {};
\node[main node] at (2,1) (2) {};
\node[main node] at (4,1) (3) {};
\node[main node] at (2,0) (4) {};
\node[main node] at (4,0) (5) {};

 \path[every node/.style={font=\sffamily}]
(0) edge (1) (0) edge (2) (0) edge (3) (2) edge (4) (3) edge (5);
\end{tikzpicture}
\\

$T^{(1)}$ &\quad
$T^{(2)}$ &\quad
$T^{(3)}$ &\quad
$T^{(4)}$ &\quad
$T^{(5)}$ &\quad
$T^{(6)}$ 
\\
\\

\begin{tikzpicture}[xscale=\sc, yscale = \ysc,thick,main node/.style={circle,inner sep=0.5mm,draw,font=\small\sffamily}]
\node[main node] at (2,2) (0) {};
\node[main node] at (0,1) (1) {};
\node[main node] at (2,1) (2) {};
\node[main node] at (4,1) (3) {};
\node[main node] at (4,0) (4) {};
\node[main node] at (4,-1) (5) {};

 \path[every node/.style={font=\sffamily}]
(0) edge (1) (0) edge (2) (0) edge (3) (3) edge (4) (4) edge (5);
\end{tikzpicture}

&\quad
\begin{tikzpicture}[xscale=\sc, yscale = \ysc,thick,main node/.style={circle,inner sep=0.5mm,draw,font=\small\sffamily}]
\node[main node] at (2,2) (0) {};
\node[main node] at (2,1) (1) {};
\node[main node] at (0,0) (2) {};
\node[main node] at (2,0) (3) {};
\node[main node] at (4,0) (4) {};
\node[main node] at (0,-1) (5) {};

 \path[every node/.style={font=\sffamily}]
(0) edge (1) (1) edge (2) (1) edge (3) (1) edge (4) (2) edge (5);
\end{tikzpicture}

&\quad
\begin{tikzpicture}[xscale=\sc, yscale = \ysc,thick,main node/.style={circle,inner sep=0.5mm,draw,font=\small\sffamily}]
\node[main node] at (2,2) (0) {};
\node[main node] at (2,1) (1) {};
\node[main node] at (0,0) (2) {};
\node[main node] at (2,0) (3) {};
\node[main node] at (4,0) (4) {};
\node[main node] at (2,-1) (5) {};

 \path[every node/.style={font=\sffamily}]
(0) edge (1) (1) edge (2) (1) edge (3) (1) edge (4) (3) edge (5);
\end{tikzpicture}

&\quad
\begin{tikzpicture}[xscale=\sc, yscale = \ysc,thick,main node/.style={circle,inner sep=0.5mm,draw,font=\small\sffamily}]
\node[main node] at (2,2) (0) {};
\node[main node] at (2,1) (1) {};
\node[main node] at (0,0) (2) {};
\node[main node] at (2,0) (3) {};
\node[main node] at (4,0) (4) {};
\node[main node] at (4,-1) (5) {};

 \path[every node/.style={font=\sffamily}]
(0) edge (1) (1) edge (2) (1) edge (3) (1) edge (4) (4) edge (5);
\end{tikzpicture}

&\quad
\begin{tikzpicture}[xscale=\sc, yscale = \ysc,thick,main node/.style={circle,inner sep=0.5mm,draw,font=\small\sffamily}]
\node[main node] at (2,2) (0) {};
\node[main node] at (2,1) (1) {};
\node[main node] at (2,0) (2) {};
\node[main node] at (0,-1) (3) {};
\node[main node] at (2,-1) (4) {};
\node[main node] at (4,-1) (5) {};

 \path[every node/.style={font=\sffamily}]
(0) edge (1) (1) edge (2) (2) edge (3) (2) edge (4) (2) edge (5);
\end{tikzpicture}

&\quad
\begin{tikzpicture}[xscale=\sc, yscale = \ysc,thick,main node/.style={circle,inner sep=0.5mm,draw,font=\small\sffamily}]
\node[main node] at (2,2) (0) {};
\node[main node] at (2,1) (1) {};
\node[main node] at (2,0) (2) {};
\node[main node] at (2,-1) (3) {};
\node[main node] at (2,-2) (4) {};
\node[main node] at (2,-3) (5) {};

 \path[every node/.style={font=\sffamily}]
(0) edge (1) (1) edge (2) (2) edge (3) (3) edge (4) (4) edge (5);
\end{tikzpicture}
\\
$T^{(7)}$ &\quad
$T^{(8)}$ &\quad
$T^{(9)}$ &\quad
$T^{(10)}$ &\quad
$T^{(11)}$ &\quad
$T^{(12)}$ 

\end{tabular}
\caption{The 12 trees in \( \T_3\) with 5 edges}
\label{fig302}
\end{figure}

We next define a bijection $f_2 : \M_d \to \T_d$, as follows. Given $M \in \M_d$, we write $M = M_1 \cdots M_{\ell}$, where $M_i \in \overline{\M}_d$ is irreducible for each $i \in [\ell]$.  Then $f_2(M)$ is defined recursively: The root node has outdegree $\ell$, with subtrees $T_1, \ldots, T_{\ell}$ arranged from left to right.  For each $i \in [\ell]$, if $M_i$ is an indeterminate, then $T_i$ is defined to be a one-node tree; otherwise, $M_i = L(M_i')$ for some $M_i' \in \M_d$, and we define $T_i \ce f_2(M_i’)$.

For example, Figure~\ref{fig303} illustrates the mapping $f_2$ for all $21$ ternary operator monomials $M \in \M_3$ with $\topt(M) = 3$. 

\begin{figure}[htbp]
\centering
\scriptsize
\def\sc{0.3}
\def\ysc{0.5}
{
\setlength{\tabcolsep}{3pt}
\begin{tabular}{ccccccc}

\begin{tikzpicture}[xscale=\sc, yscale = \ysc,thick,main node/.style={circle,inner sep=0.5mm,draw,font=\small\sffamily}]
\node[main node] at (3,2) (0) {};
\node[main node] at (0,1) (1) {};
\node[main node] at (1,1) (2) {};
\node[main node] at (2,1) (3) {};
\node[main node] at (3,1) (4) {};
\node[main node] at (4,1) (5) {};
\node[main node] at (5,1) (6) {};
\node[main node] at (6,1) (7) {};

 \path[every node/.style={font=\sffamily}]
(0) edge (1) (0) edge (2) (0) edge (3) (0) edge (4) (0) edge (5) (0) edge (6) (0) edge (7);
\end{tikzpicture}
&
\begin{tikzpicture}[xscale=\sc, yscale = \ysc,thick,main node/.style={circle,inner sep=0.5mm,draw,font=\small\sffamily}]
\node[main node] at (2,2) (0) {};
\node[main node] at (2,1) (1) {};
\node[main node] at (0,0) (2) {};
\node[main node] at (1,0) (3) {};
\node[main node] at (2,0) (4) {};
\node[main node] at (3,0) (5) {};
\node[main node] at (4,0) (6) {};

 \path[every node/.style={font=\sffamily}]
(0) edge (1) (1) edge (2) (1) edge (3) (1) edge (4) (1) edge (5) (1) edge (6);
\end{tikzpicture}

&
\begin{tikzpicture}[xscale=\sc, yscale = \ysc,thick,main node/.style={circle,inner sep=0.5mm,draw,font=\small\sffamily}]
\node[main node] at (2,2) (0) {};
\node[main node] at (0,1) (1) {};
\node[main node] at (2,1) (2) {};
\node[main node] at (4,1) (3) {};
\node[main node] at (-1,0) (4) {};
\node[main node] at (0,0) (5) {};
\node[main node] at (1,0) (6) {};

 \path[every node/.style={font=\sffamily}]
(0) edge (1) (0) edge (2) (0) edge (3) (1) edge (4) (1) edge (5) (1) edge (6);
\end{tikzpicture}
&
\begin{tikzpicture}[xscale=\sc, yscale = \ysc,thick,main node/.style={circle,inner sep=0.5mm,draw,font=\small\sffamily}]
\node[main node] at (2,2) (0) {};
\node[main node] at (0,1) (1) {};
\node[main node] at (1,1) (2) {};
\node[main node] at (2,1) (3) {};
\node[main node] at (3,1) (4) {};
\node[main node] at (4,1) (5) {};
\node[main node] at (0,0) (6) {};

 \path[every node/.style={font=\sffamily}]
(0) edge (1) (0) edge (2) (0) edge (3) (0) edge (4) (0) edge (5) (1) edge (6);
\end{tikzpicture}
&
\begin{tikzpicture}[xscale=\sc, yscale = \ysc,thick,main node/.style={circle,inner sep=0.5mm,draw,font=\small\sffamily}]
\node[main node] at (2,2) (0) {};
\node[main node] at (0,1) (1) {};
\node[main node] at (2,1) (2) {};
\node[main node] at (4,1) (3) {};
\node[main node] at (1,0) (4) {};
\node[main node] at (2,0) (5) {};
\node[main node] at (3,0) (6) {};

 \path[every node/.style={font=\sffamily}]
(0) edge (1) (0) edge (2) (0) edge (3) (2) edge (4) (2) edge (5) (2) edge (6);
\end{tikzpicture}
&
\begin{tikzpicture}[xscale=\sc, yscale = \ysc,thick,main node/.style={circle,inner sep=0.5mm,draw,font=\small\sffamily}]
\node[main node] at (2,2) (0) {};
\node[main node] at (0,1) (1) {};
\node[main node] at (1,1) (2) {};
\node[main node] at (2,1) (3) {};
\node[main node] at (3,1) (4) {};
\node[main node] at (4,1) (5) {};
\node[main node] at (1,0) (6) {};

 \path[every node/.style={font=\sffamily}]
(0) edge (1) (0) edge (2) (0) edge (3) (0) edge (4) (0) edge (5) (2) edge (6);
\end{tikzpicture}
&
\begin{tikzpicture}[xscale=\sc, yscale = \ysc,thick,main node/.style={circle,inner sep=0.5mm,draw,font=\small\sffamily}]
\node[main node] at (2,2) (0) {};
\node[main node] at (0,1) (1) {};
\node[main node] at (2,1) (2) {};
\node[main node] at (4,1) (3) {};
\node[main node] at (3,0) (4) {};
\node[main node] at (4,0) (5) {};
\node[main node] at (5,0) (6) {};

 \path[every node/.style={font=\sffamily}]
(0) edge (1) (0) edge (2) (0) edge (3) (3) edge (4) (3) edge (5) (3) edge (6);
\end{tikzpicture}
\\

$a_1a_2a_3a_4a_5a_6a_7$ &
$L(a_1a_2a_3a_4a_5)$ &
$L(a_1a_2a_3)a_4a_5$&
$L(a_1)a_2a_3a_4a_5$&
$a_1L(a_2a_3a_4)a_5$&
$ a_1L(a_2)a_3a_4a_5$&
$  a_1a_2L(a_3a_4a_5)$\\
 \\
  
\begin{tikzpicture}[xscale=\sc, yscale = \ysc,thick,main node/.style={circle,inner sep=0.5mm,draw,font=\small\sffamily}]
\node[main node] at (2,2) (0) {};
\node[main node] at (0,1) (1) {};
\node[main node] at (1,1) (2) {};
\node[main node] at (2,1) (3) {};
\node[main node] at (3,1) (4) {};
\node[main node] at (4,1) (5) {};
\node[main node] at (2,0) (6) {};

 \path[every node/.style={font=\sffamily}]
(0) edge (1) (0) edge (2) (0) edge (3) (0) edge (4) (0) edge (5) (3) edge (6);
\end{tikzpicture}
&
\begin{tikzpicture}[xscale=\sc, yscale = \ysc,thick,main node/.style={circle,inner sep=0.5mm,draw,font=\small\sffamily}]
\node[main node] at (2,2) (0) {};
\node[main node] at (0,1) (1) {};
\node[main node] at (1,1) (2) {};
\node[main node] at (2,1) (3) {};
\node[main node] at (3,1) (4) {};
\node[main node] at (4,1) (5) {};
\node[main node] at (3,0) (6) {};

 \path[every node/.style={font=\sffamily}]
(0) edge (1) (0) edge (2) (0) edge (3) (0) edge (4) (0) edge (5) (4) edge (6);
\end{tikzpicture}
&
\begin{tikzpicture}[xscale=\sc, yscale = \ysc,thick,main node/.style={circle,inner sep=0.5mm,draw,font=\small\sffamily}]
\node[main node] at (2,2) (0) {};
\node[main node] at (0,1) (1) {};
\node[main node] at (1,1) (2) {};
\node[main node] at (2,1) (3) {};
\node[main node] at (3,1) (4) {};
\node[main node] at (4,1) (5) {};
\node[main node] at (4,0) (6) {};

 \path[every node/.style={font=\sffamily}]
(0) edge (1) (0) edge (2) (0) edge (3) (0) edge (4) (0) edge (5) (5) edge (6);
\end{tikzpicture}
&
\begin{tikzpicture}[xscale=\sc, yscale = \ysc,thick,main node/.style={circle,inner sep=0.5mm,draw,font=\small\sffamily}]
\node[main node] at (2,2) (0) {};
\node[main node] at (2,1) (1) {};
\node[main node] at (2,0) (2) {};
\node[main node] at (0,-1) (3) {};
\node[main node] at (2,-1) (4) {};
\node[main node] at (4,-1) (5) {};

 \path[every node/.style={font=\sffamily}]
(0) edge (1) (1) edge (2) (2) edge (3) (2) edge (4) (2) edge (5);
\end{tikzpicture}
&

\begin{tikzpicture}[xscale=\sc, yscale = \ysc,thick,main node/.style={circle,inner sep=0.5mm,draw,font=\small\sffamily}]
\node[main node] at (2,2) (0) {};
\node[main node] at (2,1) (1) {};
\node[main node] at (0,0) (2) {};
\node[main node] at (2,0) (3) {};
\node[main node] at (4,0) (4) {};
\node[main node] at (0,-1) (5) {};

 \path[every node/.style={font=\sffamily}]
(0) edge (1) (1) edge (2) (1) edge (3) (1) edge (4) (2) edge (5);
\end{tikzpicture}

&

\begin{tikzpicture}[xscale=\sc, yscale = \ysc,thick,main node/.style={circle,inner sep=0.5mm,draw,font=\small\sffamily}]
\node[main node] at (2,2) (0) {};
\node[main node] at (0,1) (1) {};
\node[main node] at (2,1) (2) {};
\node[main node] at (4,1) (3) {};
\node[main node] at (0,0) (4) {};
\node[main node] at (0,-1) (5) {};

 \path[every node/.style={font=\sffamily}]
(0) edge (1) (0) edge (2) (0) edge (3) (1) edge (4) (4) edge (5);
\end{tikzpicture}
&
    \begin{tikzpicture}[xscale=\sc, yscale = \ysc,thick,main node/.style={circle,inner sep=0.5mm,draw,font=\small\sffamily}]
\node[main node] at (2,2) (0) {};
\node[main node] at (2,1) (1) {};
\node[main node] at (0,0) (2) {};
\node[main node] at (2,0) (3) {};
\node[main node] at (4,0) (4) {};
\node[main node] at (2,-1) (5) {};

 \path[every node/.style={font=\sffamily}]
(0) edge (1) (1) edge (2) (1) edge (3) (1) edge (4) (3) edge (5);
\end{tikzpicture}

\\
$  a_1a_2L(a_3)a_4a_5$&
$a_1a_2a_3L(a_4)a_5$&
$a_1a_2a_3a_4L(a_5)$&
$ L(L(a_1a_2a_3))$&
$ L(L(a_1)a_2a_3)$ &
$ L(L(a_1))a_2a_3 $&
$ L(a_1L(a_2)a_3)$\\
  \\
  
\begin{tikzpicture}[xscale=\sc, yscale = \ysc,thick,main node/.style={circle,inner sep=0.5mm,draw,font=\small\sffamily}]
\node[main node] at (2,2) (0) {};
\node[main node] at (2,1) (1) {};
\node[main node] at (0,0) (2) {};
\node[main node] at (2,0) (3) {};
\node[main node] at (4,0) (4) {};
\node[main node] at (4,-1) (5) {};

 \path[every node/.style={font=\sffamily}]
(0) edge (1) (1) edge (2) (1) edge (3) (1) edge (4) (4) edge (5);
\end{tikzpicture}

&
 \begin{tikzpicture}[xscale=\sc, yscale = \ysc,thick,main node/.style={circle,inner sep=0.5mm,draw,font=\small\sffamily}]
\node[main node] at (2,2) (0) {};
\node[main node] at (0,1) (1) {};
\node[main node] at (2,1) (2) {};
\node[main node] at (4,1) (3) {};
\node[main node] at (0,0) (4) {};
\node[main node] at (2,0) (5) {};

 \path[every node/.style={font=\sffamily}]
(0) edge (1) (0) edge (2) (0) edge (3) (1) edge (4) (2) edge (5);
\end{tikzpicture}

&

 \begin{tikzpicture}[xscale=\sc, yscale = \ysc,thick,main node/.style={circle,inner sep=0.5mm,draw,font=\small\sffamily}]
\node[main node] at (2,2) (0) {};
\node[main node] at (0,1) (1) {};
\node[main node] at (2,1) (2) {};
\node[main node] at (4,1) (3) {};
\node[main node] at (0,0) (4) {};
\node[main node] at (4,0) (5) {};

 \path[every node/.style={font=\sffamily}]
(0) edge (1) (0) edge (2) (0) edge (3) (1) edge (4) (3) edge (5);
\end{tikzpicture}

&

 \begin{tikzpicture}[xscale=\sc, yscale = \ysc,thick,main node/.style={circle,inner sep=0.5mm,draw,font=\small\sffamily}]
\node[main node] at (2,2) (0) {};
\node[main node] at (0,1) (1) {};
\node[main node] at (2,1) (2) {};
\node[main node] at (4,1) (3) {};
\node[main node] at (2,0) (4) {};
\node[main node] at (2,-1) (5) {};

 \path[every node/.style={font=\sffamily}]
(0) edge (1) (0) edge (2) (0) edge (3)  (2) edge (4) (4) edge (5);
\end{tikzpicture}
&

 \begin{tikzpicture}[xscale=\sc, yscale = \ysc,thick,main node/.style={circle,inner sep=0.5mm,draw,font=\small\sffamily}]
\node[main node] at (2,2) (0) {};
\node[main node] at (0,1) (1) {};
\node[main node] at (2,1) (2) {};
\node[main node] at (4,1) (3) {};
\node[main node] at (2,0) (4) {};
\node[main node] at (4,0) (5) {};

 \path[every node/.style={font=\sffamily}]
(0) edge (1) (0) edge (2) (0) edge (3) (2) edge (4) (3) edge (5);
\end{tikzpicture}
&
 \begin{tikzpicture}[xscale=\sc, yscale = \ysc,thick,main node/.style={circle,inner sep=0.5mm,draw,font=\small\sffamily}]
\node[main node] at (2,2) (0) {};
\node[main node] at (0,1) (1) {};
\node[main node] at (2,1) (2) {};
\node[main node] at (4,1) (3) {};
\node[main node] at (4,0) (4) {};
\node[main node] at (4,-1) (5) {};

 \path[every node/.style={font=\sffamily}]
(0) edge (1) (0) edge (2) (0) edge (3) (3) edge (4) (4) edge (5);
\end{tikzpicture}

&
 \begin{tikzpicture}[xscale=\sc, yscale = \ysc,thick,main node/.style={circle,inner sep=0.5mm,draw,font=\small\sffamily}]
\node[main node] at (2,2) (0) {};
\node[main node] at (2,1) (1) {};
\node[main node] at (2,0) (2) {};
\node[main node] at (2,-1) (3) {};
\node[main node] at (2,-2) (4) {};

 \path[every node/.style={font=\sffamily}]
(0) edge (1) (1) edge (2) (2) edge (3) (3) edge (4);
\end{tikzpicture}
\\
$ L(a_1a_2L(a_3)) $&
$ L(a_1)L(a_2)a_3 $&
$L(a_1)a_2L(a_3) $&
$a_1L(L(a_2))a_3 $&
$a_1L(a_2)L(a_3) $&
$ a_1a_2L(L(a_3)) $&
$L(L(L(a_1)))$
\end{tabular}
}
\caption{Illustrating the bijection $f_2$ for all $M \in \M_3$ with $\topt(M)=3$}
\label{fig303}
\end{figure}
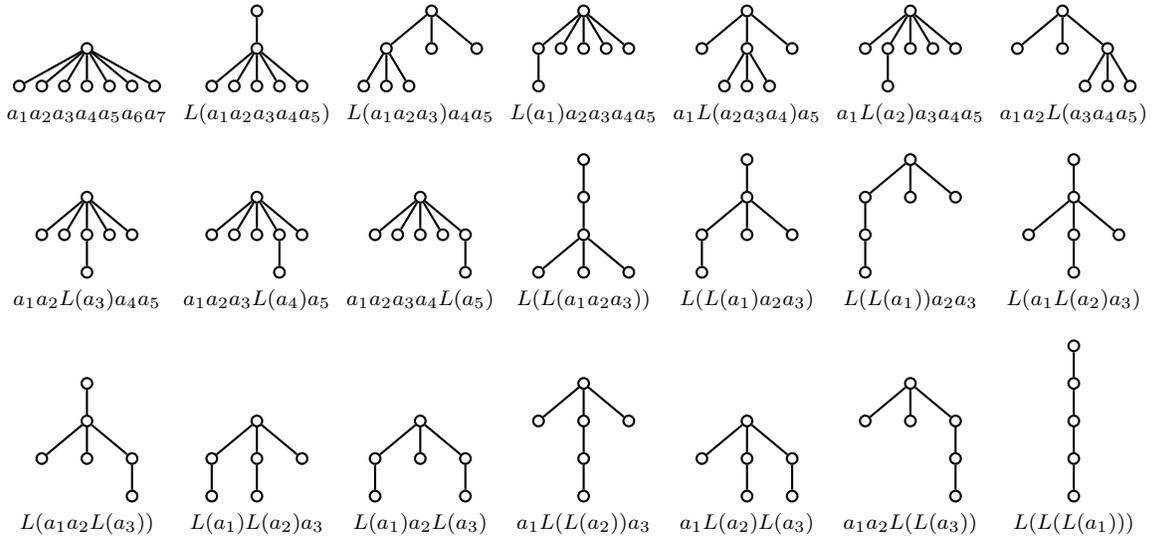

Then we have the following.

\begin{theorem}\label{thm302}
Let $d \geq 2$ and $n, k \geq 0$ be integers. Then $N_d(n, k)$ counts the number of trees in $\T_d$ with $(n-k)(d-1) + k+1$ edges and $k+1$ internal nodes.
\end{theorem}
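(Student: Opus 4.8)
The plan is to show that the recursively defined map $f_2$ is a well-defined bijection from $\M_d$ onto $\T_d$ under which the statistics $\topt$ and $\lopt$ translate into the desired tree parameters; the theorem then follows immediately from Theorem~\ref{thm201}, which guarantees that the domain monomials with $\topt(M)=n$ and $\lopt(M)=k$ are counted by $N_d(n,k)$.

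First I would verify that $f_2(M)$ always lands in $\T_d$, whose only defining constraint is that every internal node has outdegree congruent to $1$ modulo $d-1$. Arguing by induction on $\topt(M)$: by the unique factorization used in the proof of Theorem~\ref{thm201}, every $M \in \M_d$ decomposes as a product $M = M_1 \cdots M_\ell$ of irreducibles with $\ell = j(d-1)+1$ for some $j \geq 0$, so the root of $f_2(M)$ has outdegree $\ell \equiv 1 \pmod{d-1}$. The remaining internal nodes all lie inside the subtrees $T_i$, each of which is either a single leaf (contributing no internal node) or equals $f_2(M_i')$ for a strictly smaller monomial $M_i' \in \M_d$, to which the inductive hypothesis applies.

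Next I would establish that $f_2$ is a bijection by exhibiting its inverse. The key observation is that for any $M' \in \M_d$ the root of $f_2(M')$ is always internal, since its outdegree equals the number $\ell \geq 1$ of irreducible factors of $M'$. Consequently, among the subtrees hanging from the root of $f_2(M)$, those consisting of a single node correspond exactly to indeterminate factors, while those with an internal root correspond exactly to factors of the form $L(M_i')$. This dichotomy lets one read any tree $T \in \T_d$ and reconstruct a unique preimage $M$, and one checks directly that the resulting procedure is a two-sided inverse of $f_2$.

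Finally I would track the two statistics by induction. The leaves of $f_2(M)$ are in bijection with the indeterminates of $M$ (each indeterminate factor yields one leaf, and $L$ preserves degree), so their number is $\deg(M)$, whereas the internal nodes number exactly $\lopt(M)+1$: the root accounts for the $+1$, and each application of $L$ introduces one further internal node, namely the root of the subtree it wraps. Using $\deg(M) = (\topt(M)-\lopt(M))(d-1)+1$ from~\eqref{eqdegM}, a monomial with $\topt(M)=n$ and $\lopt(M)=k$ therefore maps to a tree with $k+1$ internal nodes and, since a tree has one fewer edge than node, with $\deg(M)+\lopt(M) = (n-k)(d-1)+k+1$ edges. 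Combining this with Theorem~\ref{thm201} yields the claim. The only delicate point is the bookkeeping in the recursion---in particular confirming that the subtree dichotomy underlying the inverse is exhaustive and that the outdegree congruence is preserved at every internal node---but both follow cleanly from the unique irreducible factorization already established.
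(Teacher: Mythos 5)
Your proposal is correct and follows essentially the same route as the paper: show $f_2$ lands in $\T_d$ via the unique factorization into $\ell \equiv 1 \pmod{d-1}$ irreducibles, exhibit the recursive inverse using the dichotomy between single-node subtrees (indeterminates) and internally-rooted subtrees ($L$-wrapped factors), and track leaves, internal nodes, and edges via $\deg(M) = (\topt(M)-\lopt(M))(d-1)+1$. Your treatment is if anything slightly more explicit than the paper's about why the inverse is well defined.
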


\begin{proof}
Observe that when $M \in \M_d$ is written as a product of $\ell$ irreducible operator monomials, we have that $\ell$ is congruent to $1$ modulo $d-1$. Thus, every internal node in $f_2(M)$ has outdegree congruent to $1$ modulo $d-1$, and hence $f_2(M) \in \T_d$.  Next, each non-root internal node in $f_2(M)$ corresponds to an application of $L$ in $M$, and thus the number of internal nodes in $f_2(M)$ is $\lopt(M)+1$. Moreover, each leaf of $f_2(M)$ corresponds to an indeterminate in $M$, which implies that $f_2(M)$ has $\deg(M) + \lopt(M) + 1$ vertices, and thus $\deg(M) + \lopt(M)$ edges.

Next, we describe the inverse mapping $f_2^{-1}$. Given $T \in \T_d$, the root of $T$ is an internal node by definition of $\T_d$. Let $T_1, \ldots, T_{\ell}$ be the subtrees of the root node of $T$ from left to right.  Define the monomial $f_2^{-1}(T)$ as $M_1 \cdots M_{\ell}$ such that $M_i$ is an indeterminate if $T_i$ consists of a single vertex, and $M_i = L(f_2^{-1}(T_i))$ otherwise.  Therefore, $f_2 : \M_d \to \T_d$ is indeed a bijection, and the claim follows.
\end{proof}

We remark that there is a more visual and non-recursive way of describing the inverse mapping $f_2^{-1}$. Given a tree $T \in \T_d$, write symbols $a_1, a_2, \ldots$ under each leaf of $T$ from left to right. Then, for each non-root internal node of $T$, write $L($ to the left of the node and $)$ to the right of the node. Finally, one can write out the monomial $f_2^{-1}(T)$ by starting at the top of the root node of $T$ and then ``walking'' counterclockwise around the tree and picking up the written symbols in order. For example, for the tree in $T \in \T_3$ given in Figure~\ref{fig304}, we have
\[
f_2^{-1}(T) = L(L(a_1)a_2L(a_3a_4a_5)).
\]

\begin{figure}[htbp]
\centering
\begin{tikzpicture}[xscale=1.4, yscale = 1.2 ,thick,main node/.style={circle,inner sep=0.5mm,draw,font=\small\sffamily}]

\node[main node] at (2,5) (8) {};
\node[main node] at (2,4) (0) {};
\node[main node] at (1,3) (1) {};
\node[main node] at (2,3) (2) {};
\node[main node] at (3,3) (3) {};
\node[main node] at (1,2) (4) {};
\node[main node] at (2.3,2) (5) {};
\node[main node] at (3,2) (6) {};
\node[main node] at (3.7,2) (7) {};

\node[label=below:{$a_1$}] at ( 1,2){};
\node[label=below:{$a_2$}] at ( 2,3){};
\node[label=below:{$a_3$}] at ( 2.3,2){};
\node[label=below:{$a_4$}] at ( 3,2){};
\node[label=below:{$a_5$}] at ( 3.7,2){};
\node[label=left:{$L($}] at (2,4){};
\node[label=right:{$)$}] at (2,4){};
\node[label=left:{$L($}] at (1,3){};
\node[label=right:{$)$}] at (1,3){};
\node[label=left:{$L($}] at (3,3){};
\node[label=right:{$)$}] at (3,3){};

 \path[every node/.style={font=\sffamily}]
(8) edge (0) (0) edge (1) (0) edge (2) (0) edge (3) (1) edge (4) (3) edge (5) (3) edge (6) (3) edge (7);
\end{tikzpicture}
\caption{Illustrating the inverse mapping $f_2^{-1}$}\label{fig304}
\end{figure}
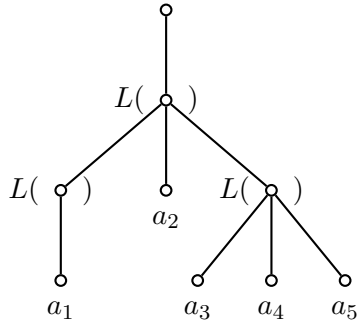

Notice that when $d=2$, $\T_d$ is simply the set of all nonempty ordered trees. Thus, Theorem~\ref{thm302} specializes to the fact that the Narayana number $N_2(n,k)$ counts the number of ordered trees with $n+1$ edges and $k+1$ internal nodes (see, for instance, \seqnum{A001263}).

\subsection{Dyck paths with restricted ascents}\label{sec303}

Now that we have a combinatorial interpretation of $N_d(n,k)$ in terms of a particular subset of ordered trees, we can leverage known bijections between classic combinatorial objects to obtain other interpretations of $N_d(n,k)$. For the remainder of the section, we will detail two such interpretations.

First, a \emph{Dyck path} is a Schr\"{o}der path which does not contain any horizontal steps (i.e., it uses only up and down steps). Given an integer $d \geq 2$, let $\Q_d$ denote the set of nonempty Dyck paths in which every \emph{ascent} (i.e., a maximal subsequence of up steps in the path) has length congruent to $1$ modulo $d-1$. For example, Figure~\ref{fig305} lists the $12$ paths in $\Q_3$ with semilength $5$.
\begin{figure}[htbp]
\centering
\def\sc{0.3}
\def\grid{\foreach \x in {\xlb ,...,\xub}
{    \ifthenelse{\NOT 0 = \x}{\draw[thick](\x ,-1pt) -- (\x ,1pt);}{}
\draw[dotted, thick](\x,\ylb- \buf) -- (\x,\yub + \buf);}
\foreach \y in {\ylb ,...,\yub}
{    \ifthenelse{\NOT 0 = \y}{\draw[thick](-1pt, \y) -- (1pt, \y);}{}
\draw[dotted, thick](\xlb- \buf, \y) -- (\xub + \buf, \y);}
}
\begin{tabular}{cccccc}
\begin{tikzpicture}[scale = \sc, font=\scriptsize\sffamily, thick,main node/.style={circle,inner sep=0.4mm,draw, fill}]
\def\xlb{0}; \def\xub{10}; \def\ylb{0}; \def\yub{5}; \def\buf{0}; \grid
\node[main node] at (0,0) (0) {};
\node[main node] at (1,1) (1) {};
\node[main node] at (2,2) (2) {};
\node[main node] at (3,3) (3) {};
\node[main node] at (4,4) (4) {};
\node[main node] at (5,5) (5) {};
\node[main node] at (6,4) (6) {};
\node[main node] at (7,3) (7) {};
\node[main node] at (8,2) (8) {};
\node[main node] at (9,1) (9) {};
\node[main node] at (10,0) (10) {};\draw[ultra thick] (0) -- (1) -- (2) -- (3)-- (4)-- (5)-- (6)-- (7)-- (8)-- (9)-- (10);
\end{tikzpicture}
&

\begin{tikzpicture}[scale = \sc, font=\scriptsize\sffamily, thick,main node/.style={circle,inner sep=0.4mm,draw, fill}]
\def\xlb{0}; \def\xub{10}; \def\ylb{0}; \def\yub{5}; \def\buf{0}; \grid
\node[main node] at (0,0) (0) {};
\node[main node] at (1,1) (1) {};
\node[main node] at (2,2) (2) {};
\node[main node] at (3,3) (3) {};
\node[main node] at (4,2) (4) {};
\node[main node] at (5,3) (5) {};
\node[main node] at (6,2) (6) {};
\node[main node] at (7,3) (7) {};
\node[main node] at (8,2) (8) {};
\node[main node] at (9,1) (9) {};
\node[main node] at (10,0) (10) {};\draw[ultra thick] (0) -- (1) -- (2) -- (3)-- (4)-- (5)-- (6)-- (7)-- (8)-- (9)-- (10);
\end{tikzpicture}
&

\begin{tikzpicture}[scale = \sc, font=\scriptsize\sffamily, thick,main node/.style={circle,inner sep=0.4mm,draw, fill}]
\def\xlb{0}; \def\xub{10}; \def\ylb{0}; \def\yub{5}; \def\buf{0}; \grid
\node[main node] at (0,0) (0) {};
\node[main node] at (1,1) (1) {};
\node[main node] at (2,2) (2) {};
\node[main node] at (3,3) (3) {};
\node[main node] at (4,2) (4) {};
\node[main node] at (5,3) (5) {};
\node[main node] at (6,2) (6) {};
\node[main node] at (7,1) (7) {};
\node[main node] at (8,2) (8) {};
\node[main node] at (9,1) (9) {};
\node[main node] at (10,0) (10) {};\draw[ultra thick] (0) -- (1) -- (2) -- (3)-- (4)-- (5)-- (6)-- (7)-- (8)-- (9)-- (10);
\end{tikzpicture}
&

\begin{tikzpicture}[scale = \sc, font=\scriptsize\sffamily, thick,main node/.style={circle,inner sep=0.4mm,draw, fill}]
\def\xlb{0}; \def\xub{10}; \def\ylb{0}; \def\yub{5}; \def\buf{0}; \grid
\node[main node] at (0,0) (0) {};
\node[main node] at (1,1) (1) {};
\node[main node] at (2,2) (2) {};
\node[main node] at (3,3) (3) {};
\node[main node] at (4,2) (4) {};
\node[main node] at (5,3) (5) {};
\node[main node] at (6,2) (6) {};
\node[main node] at (7,1) (7) {};
\node[main node] at (8,0) (8) {};
\node[main node] at (9,1) (9) {};
\node[main node] at (10,0) (10) {};\draw[ultra thick] (0) -- (1) -- (2) -- (3)-- (4)-- (5)-- (6)-- (7)-- (8)-- (9)-- (10);
\end{tikzpicture}
\\
$Q^{(1)}$ &
$Q^{(2)}$ &
$Q^{(3)}$ &
$Q^{(4)}$ 
\\
\\
\begin{tikzpicture}[scale = \sc, font=\scriptsize\sffamily, thick,main node/.style={circle,inner sep=0.4mm,draw, fill}]
\def\xlb{0}; \def\xub{10}; \def\ylb{0}; \def\yub{5}; \def\buf{0}; \grid
\node[main node] at (0,0) (0) {};
\node[main node] at (1,1) (1) {};
\node[main node] at (2,2) (2) {};
\node[main node] at (3,3) (3) {};
\node[main node] at (4,2) (4) {};
\node[main node] at (5,1) (5) {};
\node[main node] at (6,2) (6) {};
\node[main node] at (7,1) (7) {};
\node[main node] at (8,2) (8) {};
\node[main node] at (9,1) (9) {};
\node[main node] at (10,0) (10) {};\draw[ultra thick] (0) -- (1) -- (2) -- (3)-- (4)-- (5)-- (6)-- (7)-- (8)-- (9)-- (10);
\end{tikzpicture}
&

\begin{tikzpicture}[scale = \sc, font=\scriptsize\sffamily, thick,main node/.style={circle,inner sep=0.4mm,draw, fill}]
\def\xlb{0}; \def\xub{10}; \def\ylb{0}; \def\yub{5}; \def\buf{0}; \grid
\node[main node] at (0,0) (0) {};
\node[main node] at (1,1) (1) {};
\node[main node] at (2,2) (2) {};
\node[main node] at (3,3) (3) {};
\node[main node] at (4,2) (4) {};
\node[main node] at (5,1) (5) {};
\node[main node] at (6,2) (6) {};
\node[main node] at (7,1) (7) {};
\node[main node] at (8,0) (8) {};
\node[main node] at (9,1) (9) {};
\node[main node] at (10,0) (10) {};\draw[ultra thick] (0) -- (1) -- (2) -- (3)-- (4)-- (5)-- (6)-- (7)-- (8)-- (9)-- (10);
\end{tikzpicture}
&

\begin{tikzpicture}[scale = \sc, font=\scriptsize\sffamily, thick,main node/.style={circle,inner sep=0.4mm,draw, fill}]
\def\xlb{0}; \def\xub{10}; \def\ylb{0}; \def\yub{5}; \def\buf{0}; \grid
\node[main node] at (0,0) (0) {};
\node[main node] at (1,1) (1) {};
\node[main node] at (2,2) (2) {};
\node[main node] at (3,3) (3) {};
\node[main node] at (4,2) (4) {};
\node[main node] at (5,1) (5) {};
\node[main node] at (6,0) (6) {};
\node[main node] at (7,1) (7) {};
\node[main node] at (8,0) (8) {};
\node[main node] at (9,1) (9) {};
\node[main node] at (10,0) (10) {};\draw[ultra thick] (0) -- (1) -- (2) -- (3)-- (4)-- (5)-- (6)-- (7)-- (8)-- (9)-- (10);
\end{tikzpicture}
&

\begin{tikzpicture}[scale = \sc, font=\scriptsize\sffamily, thick,main node/.style={circle,inner sep=0.4mm,draw, fill}]
\def\xlb{0}; \def\xub{10}; \def\ylb{0}; \def\yub{5}; \def\buf{0}; \grid
\node[main node] at (0,0) (0) {};
\node[main node] at (1,1) (1) {};
\node[main node] at (2,0) (2) {};
\node[main node] at (3,1) (3) {};
\node[main node] at (4,2) (4) {};
\node[main node] at (5,3) (5) {};
\node[main node] at (6,2) (6) {};
\node[main node] at (7,3) (7) {};
\node[main node] at (8,2) (8) {};
\node[main node] at (9,1) (9) {};
\node[main node] at (10,0) (10) {};
\draw[ultra thick] (0) -- (1) -- (2) -- (3)-- (4)-- (5)-- (6)-- (7)-- (8)-- (9)-- (10);
\end{tikzpicture}
\\
$Q^{(5)} $&
$Q^{(6)}$ &
$Q^{(7)}$ &
$Q^{(8)}$ 
\\
\\
\begin{tikzpicture}[scale = \sc, font=\scriptsize\sffamily, thick,main node/.style={circle,inner sep=0.4mm,draw, fill}]
\def\xlb{0}; \def\xub{10}; \def\ylb{0}; \def\yub{5}; \def\buf{0}; \grid
\node[main node] at (0,0) (0) {};
\node[main node] at (1,1) (1) {};
\node[main node] at (2,0) (2) {};
\node[main node] at (3,1) (3) {};
\node[main node] at (4,2) (4) {};
\node[main node] at (5,3) (5) {};
\node[main node] at (6,2) (6) {};
\node[main node] at (7,1) (7) {};
\node[main node] at (8,2) (8) {};
\node[main node] at (9,1) (9) {};
\node[main node] at (10,0) (10) {};\draw[ultra thick] (0) -- (1) -- (2) -- (3)-- (4)-- (5)-- (6)-- (7)-- (8)-- (9)-- (10);
\end{tikzpicture}
&

\begin{tikzpicture}[scale = \sc, font=\scriptsize\sffamily, thick,main node/.style={circle,inner sep=0.4mm,draw, fill}]
\def\xlb{0}; \def\xub{10}; \def\ylb{0}; \def\yub{5}; \def\buf{0}; \grid
\node[main node] at (0,0) (0) {};
\node[main node] at (1,1) (1) {};
\node[main node] at (2,0) (2) {};
\node[main node] at (3,1) (3) {};
\node[main node] at (4,2) (4) {};
\node[main node] at (5,3) (5) {};
\node[main node] at (6,2) (6) {};
\node[main node] at (7,1) (7) {};
\node[main node] at (8,0) (8) {};
\node[main node] at (9,1) (9) {};
\node[main node] at (10,0) (10) {};\draw[ultra thick] (0) -- (1) -- (2) -- (3)-- (4)-- (5)-- (6)-- (7)-- (8)-- (9)-- (10);
\end{tikzpicture}
&

\begin{tikzpicture}[scale = \sc, font=\scriptsize\sffamily, thick,main node/.style={circle,inner sep=0.4mm,draw, fill}]
\def\xlb{0}; \def\xub{10}; \def\ylb{0}; \def\yub{5}; \def\buf{0}; \grid
\node[main node] at (0,0) (0) {};
\node[main node] at (1,1) (1) {};
\node[main node] at (2,0) (2) {};
\node[main node] at (3,1) (3) {};
\node[main node] at (4,0) (4) {};
\node[main node] at (5,1) (5) {};
\node[main node] at (6,2) (6) {};
\node[main node] at (7,3) (7) {};
\node[main node] at (8,2) (8) {};
\node[main node] at (9,1) (9) {};
\node[main node] at (10,0) (10) {};\draw[ultra thick] (0) -- (1) -- (2) -- (3)-- (4)-- (5)-- (6)-- (7)-- (8)-- (9)-- (10);
\end{tikzpicture}
&

\begin{tikzpicture}[scale = \sc, font=\scriptsize\sffamily, thick,main node/.style={circle,inner sep=0.4mm,draw, fill}]
\def\xlb{0}; \def\xub{10}; \def\ylb{0}; \def\yub{5}; \def\buf{0}; \grid
\node[main node] at (0,0) (0) {};
\node[main node] at (1,1) (1) {};
\node[main node] at (2,0) (2) {};
\node[main node] at (3,1) (3) {};
\node[main node] at (4,0) (4) {};
\node[main node] at (5,1) (5) {};
\node[main node] at (6,0) (6) {};
\node[main node] at (7,1) (7) {};
\node[main node] at (8,0) (8) {};
\node[main node] at (9,1) (9) {};
\node[main node] at (10,0) (10) {};\draw[ultra thick] (0) -- (1) -- (2) -- (3)-- (4)-- (5)-- (6)-- (7)-- (8)-- (9)-- (10);
\end{tikzpicture}
\\
$Q^{(9)} $&
$Q^{(10)}$ &
$Q^{(11)}$ &
$Q^{(12)}$ 
\end{tabular}

\caption{The $12$ Dyck paths in $\Q_3$ with semilength $5$}
\label{fig305}
\end{figure}
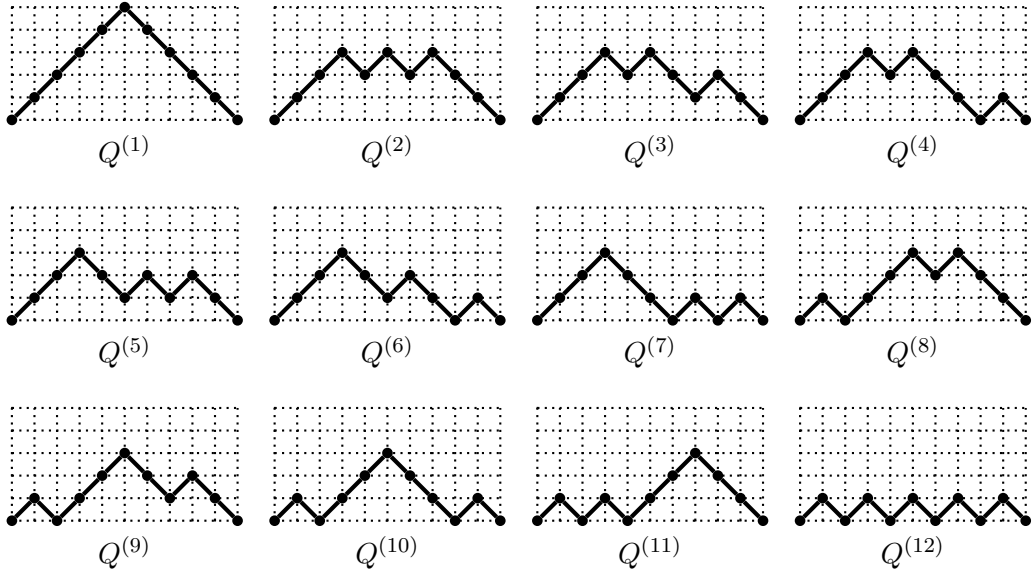

Next, we describe the bijection between rooted ordered trees and Dyck paths. Given a tree $T$, let $v_1, \ldots, v_n$ be the list of vertices in $T$ in preorder traversal (also known as depth-first traversal). Then let $j_i$ be the outdegree of $v_i$ for every $i \in [n]$. Observe that $v_n$ is always a leaf, and thus $j_n$ must be zero. We define
\[
f_3(T) \ce U^{j_1} D U^{j_2} D \cdots U^{j_{n-1}}D.
\]
For example, for the trees from Figure~\ref{fig303} and Dyck paths from Figure~\ref{fig305}, we have $f_3(T^{(i)}) = Q^{(i)}$ for every $i \in [12]$. 
Then we have the following result.

\begin{theorem}\label{thm303}
Let $d \geq 2$ and $n,k \geq 0$ be integers. Then $N_d(n,k)$ counts the number of Dyck paths in $\Q_d$ with semilength $(d-1)(n-k) + k+1$ and with exactly $k+1$ peaks.
\end{theorem}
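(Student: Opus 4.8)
The plan is to show that the classical bijection $f_3$ restricts to a bijection between $\T_d$ and $\Q_d$ that carries the edge and internal-node statistics to the semilength and peak statistics, respectively; the theorem then follows immediately by composing with Theorem~\ref{thm302}. First I would recall that $f_3$, defined via the preorder outdegree sequence $(j_1,\dots,j_{n-1})$, is the well-known bijection between nonempty ordered trees and nonempty Dyck paths, and in particular is already invertible, its inverse reconstructing the unique ordered tree with the prescribed preorder sequence of outdegrees. Since $\sum_i j_i$ equals the number of edges of $T$ while each of the $n-1$ blocks $U^{j_i}D$ contributes exactly one down step, the path $f_3(T)$ has semilength equal to the number of edges of $T$.

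Next I would establish the two statistic correspondences that drive the argument. Because every block $U^{j_i}D$ ends in a down step, the up steps coming from distinct blocks never merge; hence the maximal ascents of $f_3(T)$ are exactly the blocks $U^{j_i}$ with $j_i>0$, namely the blocks indexed by the internal nodes $v_i$ of $T$, and the length of such an ascent equals the outdegree $j_i$ of that node. Consequently the number of maximal ascents of $f_3(T)$ equals the number of internal nodes of $T$, with each ascent length matching an internal-node outdegree. Moreover, every maximal ascent is immediately followed by a down step, so its final $U$ together with that $D$ forms a peak, and conversely every peak arises in this way; thus the number of peaks of $f_3(T)$ equals the number of maximal ascents, hence the number of internal nodes of $T$.

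The key step is then the restriction claim. From the ascent-length versus outdegree correspondence, a tree $T$ lies in $\T_d$ (every internal outdegree $\equiv 1 \pmod{d-1}$) if and only if every ascent of $f_3(T)$ has length $\equiv 1 \pmod{d-1}$, that is, if and only if $f_3(T)\in\Q_d$; nonemptiness of the path guarantees at least one internal node, namely the root. Hence $f_3$ restricts to a bijection $\T_d \to \Q_d$ under which edges map to semilength and internal nodes map to peaks. Applying Theorem~\ref{thm302}, the trees in $\T_d$ with $(n-k)(d-1)+k+1$ edges and $k+1$ internal nodes, which are counted by $N_d(n,k)$, correspond bijectively to the Dyck paths in $\Q_d$ with semilength $(d-1)(n-k)+k+1$ and exactly $k+1$ peaks, proving the claim.

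I expect the only delicate point to be the bookkeeping showing that distinct blocks never coalesce into a single ascent and that peaks and maximal ascents are in one-to-one correspondence; once these are in hand, everything else is a direct transport of the two statistics through the standard ordered-tree/Dyck-path bijection.
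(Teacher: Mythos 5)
Your proposal is correct and follows essentially the same route as the paper: both arguments show that $f_3$ restricts to a bijection $\T_d \to \Q_d$ under which edges correspond to semilength and internal nodes (via ascents) correspond to peaks, and then invoke Theorem~\ref{thm302}. Your extra care about blocks not coalescing and the peak/ascent correspondence is a slightly more detailed version of the same bookkeeping the paper performs.
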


\begin{proof}
Given $T \in \T_d$, observe that every ascent in $f_3(T)$ uniquely corresponds to an internal node in $T$. Since every nonzero outdegree in $T$ is congruent to $1$ modulo $(d-1)$, so is the length of every ascent in $f_3(T)$, and hence $f_3(T) \in \Q_d$. We also obtain that $|f_3(T)|$ is one less than the number of vertices in $T$, which is equal to the number of edges in $T$. Moreover, since the number of peaks in a Dyck path is equal to its number of ascents, the number of peaks in $f_3(T)$ is equal to
the number of internal nodes in $T$.

Conversely, given a Dyck path $Q \in \Q_d$, we can extract $j_1 \ldots, j_{n-1}$ and recover the tree $f_3^{-1}(Q) \in \T_d$. Hence, $f_3 : \T_d \to \Q_d$ is a bijection, which completes the proof.
\end{proof}

Observe that $\Q_2$ is the set of all nonempty Dyck paths. Thus, when $d=2$, Theorem~\ref{thm303} specializes to the well-known fact that $N_2(n,k)$ counts the number of Dyck paths with semilength $n+1$ and with exactly $k+1$ peaks~\cite[Section 2.4.2]{Petersen15}.


\subsection{$231$-avoiding permutations with restricted decreasing runs}

Let $P \colon [n] \to [n]$ be a permutation of $[n]$. For convenience, we will often write $P = P_1 \cdots P_n$ to indicate that the permutation $P$ maps $i$ to $P_i$ for every $i \in [n]$. Also, let $\pi = \pi_1 \cdots \pi_k$ be a permutation of $[k]$ where $k \leq n$. We say that $P$ \emph{contains} the pattern $\pi$ if there exist indices $1 \leq i_1 < i_2 < \cdots < i_k$ such that, for every distinct $j, j' \in [k]$, $P_{i_j} < P_{i_{j'}}$ if and only if $\pi_j < \pi_{j'}$. For example, $P = 43521$ contains (four instances of) the pattern $\pi = 231$. Conversely, we say that $P$ \emph{avoids} $\pi$ if $P$ does not contain $\pi$. Also, a \emph{decreasing run} in a permutation $P$ is a maximal decreasing subsequence of $P$. For example, $P = 314652$ contains three decreasing runs: $31$, $4$, and $652$.

Given an integer $d \geq 2$, we let $\P_d$ denote the set of nonempty $231$-avoiding permutations where every decreasing run has length congruent to $1$ modulo $(d-1)$. For example, Table~\ref{tab301} lists the $12$ permutations of $[5]$ in $\P_3$.

\begin{table}
\[
\begin{array}{llll}
P^{(1)} \ce 54321 &\quad P^{(2)} \ce 54123 &\quad P^{(3)} \ce 53124 &\quad P^{(4)} \ce 43125\\
P^{(5)} \ce 52134 &\quad P^{(6)} \ce 42135 &\quad P^{(7)} \ce 32145 &\quad P^{(8)} \ce 15423\\
P^{(9)} \ce 15324 &\quad P^{(10)} \ce 14325 &\quad P^{(11)} \ce 12543 &\quad P^{(12)} \ce 12345
\end{array}
\]
\caption{The $12$ permutations of $[5]$ in $\P_3$}\label{tab301}
\end{table}

Recall the notion of the matching down step of a particular up step in a Schr\"{o}der path, which also applies to Dyck paths. Now consider the function $f_4 : \Q_d \to \P_d$ defined as follows. Given a Dyck path $Q$ of semilength $n$, define the permutation $P = f_4(Q)$ on $[n]$ such that $P_i = j$ if and only if the $j$-th down step in $Q$ matches with the $i$-th up step in $Q$. For example, consider the Dyck path
\[
Q = U_1 U_2 U_3 D_1 D_2 U_4 D_3 D_4 U_5 U_6 D_5 D_6,
\]
where each up and down step is labelled by the order they appear in $Q$. Notice that $D_1$ is the matching down step of $U_3$, and thus the image of $Q$ under $f_4$ would map $3$ to $1$. In fact, one can check that $f_4(Q) = 421365$. Also, for the Dyck paths in Figure~\ref{fig305} and the permutations in Table~\ref{tab301}, we have $f_4( Q^{(i)}) = P^{(i)}$ for every $i \in [12]$. Then we have the following.

\begin{theorem}\label{thm304}
Let $d \geq 2$ and $n,k \geq 0$ be integers. Then $N_d(n,k)$ counts the number of permutations of $[(d-1)(n-k) + k+1]$ in $\P_d$ with exactly $k+1$ decreasing runs.
\end{theorem}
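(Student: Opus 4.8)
The plan is to deduce the result from Theorem~\ref{thm303} by analysing the map $f_4$. By Theorem~\ref{thm303}, $N_d(n,k)$ is the number of Dyck paths in $\Q_d$ of semilength $s \ce (d-1)(n-k)+k+1$ with exactly $k+1$ peaks, so it suffices to prove that $f_4$ restricts to a bijection from these paths onto the permutations of $[s]$ in $\P_d$ having exactly $k+1$ decreasing runs. I would first recall that $f_4$ is the classical bijection between Dyck paths of semilength $s$ and $231$-avoiding permutations of $[s]$: the matching of up steps to down steps is a perfect matching, so $f_4(Q)$ is a genuine permutation of $[s]$, and $Q$ is recovered from $f_4(Q)$ by matching the $j$-th down step to the up step in position $j$. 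This settles bijectivity onto the $231$-avoiding permutations of $[s]$ and shows that semilength maps to ground-set size.

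The crux, and the step I expect to be the main obstacle, is to match the two remaining statistics by showing that the ascents of $Q$ correspond to the decreasing runs of $P \ce f_4(Q)$ with equal lengths. Fix an ascent consisting of the consecutive up steps indexed $i, i+1, \dots, i+m-1$, reaching levels $\ell+1, \dots, \ell+m$. These up steps are nested, so their matching down steps occur in reverse order: in order to descend past a lower level the path must first descend past every higher one, so the matching down step of $U_{i+m-1}$ precedes that of $U_{i+m-2}$, which precedes that of $U_{i+m-3}$, and so on. Reading the matched down-step indices, which are the values $P_i, \dots, P_{i+m-1}$, this gives $P_i > P_{i+1} > \cdots > P_{i+m-1}$: a decreasing block of length $m$ occupying positions $i, \dots, i+m-1$ of $P$. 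To see these blocks are exactly the decreasing runs of $P$, I would compare values across an ascent boundary: if $U_{i+m}$ starts the next ascent, then at least one down step lies between $U_{i+m-1}$ and $U_{i+m}$, so the matching down step of $U_{i+m-1}$ occurs before $U_{i+m}$, hence before the matching down step of $U_{i+m}$, giving $P_{i+m-1} < P_{i+m}$. Since the up steps of $Q$ partition into ascents and the permutation value strictly increases across every ascent boundary, the decreasing runs of $P$ are precisely these blocks, and the correspondence is length-preserving.

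With this correspondence established, the theorem follows. The multiset of ascent lengths of $Q$ coincides with the multiset of decreasing-run lengths of $P$, so $Q \in \Q_d$ (every ascent length $\equiv 1 \pmod{d-1}$) if and only if $P \in \P_d$ ($231$-avoiding with every decreasing run length $\equiv 1 \pmod{d-1}$); thus $f_4$ restricts to a bijection $\Q_d \to \P_d$ preserving ground-set size. Moreover the number of decreasing runs of $P$ equals the number of ascents of $Q$, which equals the number of peaks of $Q$. Hence $f_4$ carries the Dyck paths in $\Q_d$ of semilength $s$ with exactly $k+1$ peaks bijectively onto the permutations of $[s]$ in $\P_d$ with exactly $k+1$ decreasing runs, and Theorem~\ref{thm303} finishes the proof.
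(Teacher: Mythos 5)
Your proposal is correct and follows essentially the same route as the paper's proof: reduce to Theorem~\ref{thm303} via $f_4$, invoke the known bijection between Dyck paths and $231$-avoiding permutations, and match ascents of $Q$ to decreasing runs of $f_4(Q)$. The only difference is that you prove the ascent/decreasing-run correspondence in detail (nesting within an ascent, strict increase across ascent boundaries), whereas the paper simply asserts it; your added detail is accurate.
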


\begin{proof}
It is known that $f_4$ gives a bijection between Dyck paths and $231$-avoiding permutations (see, for instance,~\cite[Section 2.4.3]{Petersen15} for the details). Moreover, notice that there is a one-to-one correspondence between ascents in a Dyck path $Q$ and decreasing runs in $f_4(Q)$. Thus, given $Q \in \Q_d$, every decreasing run in $f_4(Q)$ has length congruent to $1$ modulo $(d-1)$, and thus $f_4(Q) \in \P_d$. Furthermore, we see that $f_4(Q)$ is a permutation of $[|Q|]$ and has as many decreasing
runs as $Q$ has ascents. Thus, our claim follows.
\end{proof}

Observe that $\P_2$ is the set of all nonempty $231$-avoiding permutations. Thus, Theorem~\ref{thm304} extends the well-known fact that $N_2(n,k)$ gives the number of $231$-avoiding permutations of $[n+1]$ with $k+1$ decreasing runs~\cite[Section 2.4.3]{Petersen15}. Furthermore, one can use known bijections between $231$-avoiding permutations and other classic combinatorial objects (such as non-crossing partitions, full binary trees, and standard Young tableaux --- see, for instance,~\cite[Chapter 2]{Petersen15}) to obtain yet more combinatorial interpretations of $N_d(n,k)$.

\section{Several ``replicative'' combinatorial interpretations of $N_d(n,k)$}\label{sec4}

In this section, we describe four more combinatorial interpretations of $N_d(n,k)$. All four sets of combinatorial objects can be seen as generalizations of objects shown in Huh et al.~\cite{HuhKSS24} to be counted by $N_3(n,k)$.

\subsection{Schr\"{o}der paths with labelled descents}

Given an integer $d \geq 2$, let $\widetilde{\S}_d$ denote the set of Schr\"{o}der paths in which:
\begin{itemize}
    \item every descent has length at most $d-1$;
    \item every descent of length $\ell \geq 1$ is labelled by an $(\ell - 1)$-element subset of $[d-2]$.
\end{itemize}

For example, an element in $\widetilde{\S}_4$ is
\[
UUUHH\underset{\emptyset}{\underline{D}}U\underset{\set{1}}{\underline{DD}}HUU\underset{\set{1,2}}{\underline{DDD}}H\underset{\emptyset}{\underline{D}}.
\]
Observe that descents of length $1$ are always labelled by the empty set, and when $d=3$, descents of length $2$ are always labelled by the set $\set{1}$. While it can be practically convenient to consider these descents unlabelled, we will preserve these labels to help streamline our description of a bijection from $\M_d$ to $\widetilde{\S}_d$. Given $M \in \M_d$, we define $f_5(M) \in \widetilde{\S}_d$ recursively as follows:

\begin{itemize}
    \item If $M$ is a single indeterminate, then $f_5(M)$ is the empty path.
    \item If $M = L(M')$ for some monomial $M' \in \M_d$, then $f_5(M) \ce f_5(M')H$.
    \item Otherwise, we can write $M = M_0 M_1 M_2 \cdots M_{d-1}$ such that $M_0 \in \M_d$, and $M_i \in \overline{\M}_d$ for every $i \in [d-1]$. Let $I \ce \set{ i \in [d-2] : \lopt(M_i) \geq 1}$, and define indices $1 \leq i_1 < \cdots < i_{|I|} \leq d-2$ such that $I = \set{i_1, \ldots, i_{|I|}}$. Also, for every $i \in I$, define $M_i'$ such that $M_i = L(M_i')$. Then we define
    \[
    f_5(M) \ce f_5(M_0) U f_5( M_{i_1}') U f_5( M_{i_2}') \cdots U f_5( M_{i_{|I|}}') U  f_5(M_{d-1})
\underset{I}{\underline{D^{|I|+1}}}.
    \]
\end{itemize}

For example, given $M \ce L(a_1 L(a_2)L^2(a_3a_4a_5)) \in \M_3$, we have
\begin{align*}
f_5(M)
&=  f_5( a_1 L(a_2)L^2(a_3a_4a_5) H\\
&= f_5(a_1) U f_5(a_2) U f_5( L^2(a_3a_4a_5)) \underset{\set{1}}{\underline{DD}} H. 
\end{align*}
Now $f_5(a_1)$ and $f_5(a_2)$ are both the empty path, while 
\[
f_5( L^2(a_3a_4a_5)) = f_5(a_3a_4a_5)HH = U\underset{\emptyset}{\underline{D}}HH.
\]
Thus, we obtain that $f_5(M) = UUU\underset{\emptyset}{\underline{D}}HH \underset{\set{1}}{\underline{DD}}H$. Observe that $f_5(M)$ has semilength $6$, which is equal to $\topt(M)$. Also, $f_5(M)$ has $3$ instances of $H$ and $1$ instance of $DD$, which sum to $\lopt(M) = 4$. Figure~\ref{fig401} lists the Schr\"{o}der paths $f_5(M)$ for all $21$ monomials $M \in \M_3$ with $\topt(M)=3$. Again, since every descent of length $1$ and $2$ are labelled by $\emptyset$ and $\set{1}$ respectively in this case, we have suppressed these labels in Figure~\ref{fig401} to reduce cluttering.

\begin{figure}[htbp]
\centering
\scriptsize
\def\sc{0.25}
\def\grid{\foreach \x in {\xlb ,...,\xub}
{    \ifthenelse{\NOT 0 = \x}{\draw[thick](\x ,-1pt) -- (\x ,1pt);}{}
\draw[dotted, thick](\x,\ylb- \buf) -- (\x,\yub + \buf);}
\foreach \y in {\ylb ,...,\yub}
{    \ifthenelse{\NOT 0 = \y}{\draw[thick](-1pt, \y) -- (1pt, \y);}{}
\draw[dotted, thick](\xlb- \buf, \y) -- (\xub + \buf, \y);}
}

\begin{tabular}{cccccc}
\begin{tikzpicture}[scale = \sc, font=\scriptsize\sffamily, thick,main node/.style={circle,inner sep=0.4mm,draw, fill}]
\def\xlb{0}; \def\xub{6}; \def\ylb{0}; \def\yub{2}; \def\buf{0}; \grid
\node[main node] at (0,0) (0) {};
\node[main node] at (1,1) (1) {};
\node[main node] at (2,0) (2) {};
\node[main node] at (3,1) (3) {};
\node[main node] at (4,0) (4) {};
\node[main node] at (5,1) (5) {};
\node[main node] at (6,0) (6) {};
\draw[ultra thick] (0) -- (1) -- (2) -- (3)-- (4)-- (5)-- (6);
\end{tikzpicture}
&

\begin{tikzpicture}[scale = \sc, font=\scriptsize\sffamily, thick,main node/.style={circle,inner sep=0.4mm,draw, fill}]
\def\xlb{0}; \def\xub{6}; \def\ylb{0}; \def\yub{2}; \def\buf{0}; \grid
\node[main node] at (0,0) (0) {};
\node[main node] at (1,1) (1) {};
\node[main node] at (2,0) (2) {};
\node[main node] at (3,1) (3) {};
\node[main node] at (4,0) (4) {};
\node[main node] at (6,0) (5) {};
\draw[ultra thick] (0) -- (1) -- (2) -- (3)-- (4)-- (5);

\end{tikzpicture}
&
\begin{tikzpicture}[scale = \sc, font=\scriptsize\sffamily, thick,main node/.style={circle,inner sep=0.4mm,draw, fill}]
\def\xlb{0}; \def\xub{6}; \def\ylb{0}; \def\yub{2}; \def\buf{0}; \grid
\node[main node] at (0,0) (0) {};
\node[main node] at (1,1) (1) {};
\node[main node] at (2,0) (2) {};
\node[main node] at (4,0) (4) {};
\node[main node] at (5,1) (5) {};
\node[main node] at (6,0) (6) {};
\draw[ultra thick] (0) -- (1) -- (2) -- (4)-- (5)-- (6);
\end{tikzpicture}
&
\begin{tikzpicture}[scale = \sc, font=\scriptsize\sffamily, thick,main node/.style={circle,inner sep=0.4mm,draw, fill}]
\def\xlb{0}; \def\xub{6}; \def\ylb{0}; \def\yub{2}; \def\buf{0}; \grid
\node[main node] at (0,0) (0) {};
\node[main node] at (2,0) (2) {};
\node[main node] at (3,1) (3) {};
\node[main node] at (4,0) (4) {};
\node[main node] at (5,1) (5) {};
\node[main node] at (6,0) (6) {};
\draw[ultra thick] (0) -- (2) -- (3)-- (4)-- (5)-- (6);\end{tikzpicture}
&
\begin{tikzpicture}[scale = \sc, font=\scriptsize\sffamily, thick,main node/.style={circle,inner sep=0.4mm,draw, fill}]
\def\xlb{0}; \def\xub{6}; \def\ylb{0}; \def\yub{2}; \def\buf{0}; \grid
\node[main node] at (0,0) (0) {};
\node[main node] at (1,1) (1) {};
\node[main node] at (2,0) (2) {};
\node[main node] at (3,1) (3) {};
\node[main node] at (4,2) (4) {};
\node[main node] at (5,1) (5) {};
\node[main node] at (6,0) (6) {};
\draw[ultra thick] (0) -- (1) -- (2) -- (3)-- (4)-- (5)-- (6);
\end{tikzpicture}
&
\begin{tikzpicture}[scale = \sc, font=\scriptsize\sffamily, thick,main node/.style={circle,inner sep=0.4mm,draw, fill}]
\def\xlb{0}; \def\xub{6}; \def\ylb{0}; \def\yub{2}; \def\buf{0}; \grid
\node[main node] at (0,0) (0) {};
\node[main node] at (1,1) (1) {};
\node[main node] at (2,2) (2) {};
\node[main node] at (3,1) (3) {};
\node[main node] at (4,0) (4) {};
\node[main node] at (5,1) (5) {};
\node[main node] at (6,0) (6) {};
\draw[ultra thick] (0) -- (1) -- (2) -- (3)-- (4)-- (5)-- (6);
\end{tikzpicture}
\\

$a_1a_2a_3a_4a_5a_6a_7$ &
$L(a_1a_2a_3a_4a_5)$ &
$L(a_1a_2a_3)a_4a_5$&
$L(a_1)a_2a_3a_4a_5$&
$a_1L(a_2a_3a_4)a_5$&
$ a_1L(a_2)a_3a_4a_5$\\
\\

\begin{tikzpicture}[scale = \sc, font=\scriptsize\sffamily, thick,main node/.style={circle,inner sep=0.4mm,draw, fill}]
\def\xlb{0}; \def\xub{6}; \def\ylb{0}; \def\yub{2}; \def\buf{0}; \grid
\node[main node] at (0,0) (0) {};
\node[main node] at (1,1) (1) {};
\node[main node] at (2,2) (2) {};
\node[main node] at (3,1) (3) {};
\node[main node] at (5,1) (5) {};
\node[main node] at (6,0) (6) {};
\draw[ultra thick] (0) -- (1) -- (2) -- (3)-- (5)-- (6);\end{tikzpicture}
&
\begin{tikzpicture}[scale = \sc, font=\scriptsize\sffamily, thick,main node/.style={circle,inner sep=0.4mm,draw, fill}]
\def\xlb{0}; \def\xub{6}; \def\ylb{0}; \def\yub{2}; \def\buf{0}; \grid
\node[main node] at (0,0) (0) {};
\node[main node] at (1,1) (1) {};
\node[main node] at (3,1) (3) {};
\node[main node] at (4,0) (4) {};
\node[main node] at (5,1) (5) {};
\node[main node] at (6,0) (6) {};
\draw[ultra thick] (0) -- (1)-- (3)-- (4)-- (5)-- (6);\end{tikzpicture}
&
\begin{tikzpicture}[scale = \sc, font=\scriptsize\sffamily, thick,main node/.style={circle,inner sep=0.4mm,draw, fill}]
\def\xlb{0}; \def\xub{6}; \def\ylb{0}; \def\yub{2}; \def\buf{0}; \grid
\node[main node] at (0,0) (0) {};
\node[main node] at (1,1) (1) {};
\node[main node] at (2,0) (2) {};
\node[main node] at (3,1) (3) {};
\node[main node] at (4,2) (4) {};
\node[main node] at (5,1) (5) {};
\node[main node] at (6,0) (6) {};
\draw[ultra thick] (0) -- (1) -- (2) -- (3)-- (4)-- (5)-- (6);\end{tikzpicture}
&

\begin{tikzpicture}[scale = \sc, font=\scriptsize\sffamily, thick,main node/.style={circle,inner sep=0.4mm,draw, fill}]
\def\xlb{0}; \def\xub{6}; \def\ylb{0}; \def\yub{2}; \def\buf{0}; \grid
\node[main node] at (0,0) (0) {};
\node[main node] at (1,1) (1) {};
\node[main node] at (2,0) (2) {};
\node[main node] at (3,1) (3) {};
\node[main node] at (5,1) (5) {};
\node[main node] at (6,0) (6) {};
\draw[ultra thick] (0) -- (1) -- (2) -- (3)--  (5)-- (6);
\end{tikzpicture}
&
\begin{tikzpicture}[scale = \sc, font=\scriptsize\sffamily, thick,main node/.style={circle,inner sep=0.4mm,draw, fill}]
\def\xlb{0}; \def\xub{6}; \def\ylb{0}; \def\yub{2}; \def\buf{0}; \grid
\node[main node] at (0,0) (0) {};
\node[main node] at (1,1) (1) {};
\node[main node] at (2,0) (2) {};
\node[main node] at (4,0) (4) {};
\node[main node] at (6,0) (6) {};
\draw[ultra thick] (0) -- (1) -- (2) --  (4)--  (6);
\end{tikzpicture}
&
\begin{tikzpicture}[scale = \sc, font=\scriptsize\sffamily, thick,main node/.style={circle,inner sep=0.4mm,draw, fill}]
\def\xlb{0}; \def\xub{6}; \def\ylb{0}; \def\yub{2}; \def\buf{0}; \grid
\node[main node] at (0,0) (0) {};
\node[main node] at (2,0) (2) {};
\node[main node] at (3,1) (3) {};
\node[main node] at (4,0) (4) {};
\node[main node] at (6,0) (6) {};
\draw[ultra thick] (0) -- (2) -- (3)-- (4)-- (6);
\end{tikzpicture}
\\

$  a_1a_2L(a_3a_4a_5)$&
$  a_1a_2L(a_3)a_4a_5$&
$a_1a_2a_3L(a_4)a_5$&
$a_1a_2a_3a_4L(a_5)$&
$ L(L(a_1a_2a_3))$&
$ L(L(a_1)a_2a_3)$ \\
\\

\begin{tikzpicture}[scale = \sc, font=\scriptsize\sffamily, thick,main node/.style={circle,inner sep=0.4mm,draw, fill}]
\def\xlb{0}; \def\xub{6}; \def\ylb{0}; \def\yub{2}; \def\buf{0}; \grid
\node[main node] at (0,0) (0) {};
\node[main node] at (2,0) (2) {};
\node[main node] at (4,0) (4) {};
\node[main node] at (5,1) (5) {};
\node[main node] at (6,0) (6) {};
\draw[ultra thick] (0) -- (2) -- (4)-- (5)-- (6);
\end{tikzpicture}
&

\begin{tikzpicture}[scale = \sc, font=\scriptsize\sffamily, thick,main node/.style={circle,inner sep=0.4mm,draw, fill}]
\def\xlb{0}; \def\xub{6}; \def\ylb{0}; \def\yub{2}; \def\buf{0}; \grid
\node[main node] at (0,0) (0) {};
\node[main node] at (1,1) (1) {};
\node[main node] at (2,2) (2) {};
\node[main node] at (3,1) (3) {};
\node[main node] at (4,0) (4) {};
\node[main node] at (6,0) (6) {};
\draw[ultra thick] (0) -- (1) -- (2) -- (3)-- (4)--  (6);
\end{tikzpicture}
&

\begin{tikzpicture}[scale = \sc, font=\scriptsize\sffamily, thick,main node/.style={circle,inner sep=0.4mm,draw, fill}]
\def\xlb{0}; \def\xub{6}; \def\ylb{0}; \def\yub{2}; \def\buf{0}; \grid
\node[main node] at (0,0) (0) {};
\node[main node] at (1,1) (1) {};
\node[main node] at (3,1) (3) {};
\node[main node] at (4,0) (4) {};
\node[main node] at (6,0) (6) {};
\draw[ultra thick] (0) -- (1) -- (3)-- (4)-- (6);
\end{tikzpicture}
&
  
\begin{tikzpicture}[scale = \sc, font=\scriptsize\sffamily, thick,main node/.style={circle,inner sep=0.4mm,draw, fill}]
\def\xlb{0}; \def\xub{6}; \def\ylb{0}; \def\yub{2}; \def\buf{0}; \grid
\node[main node] at (0,0) (0) {};
\node[main node] at (2,0) (2) {};
\node[main node] at (3,1) (3) {};
\node[main node] at (4,2) (4) {};
\node[main node] at (5,1) (5) {};
\node[main node] at (6,0) (6) {};
\draw[ultra thick] (0) -- (2) -- (3)-- (4)-- (5)-- (6);
\end{tikzpicture}
&

\begin{tikzpicture}[scale = \sc, font=\scriptsize\sffamily, thick,main node/.style={circle,inner sep=0.4mm,draw, fill}]
\def\xlb{0}; \def\xub{6}; \def\ylb{0}; \def\yub{2}; \def\buf{0}; \grid
\node[main node] at (0,0) (0) {};
\node[main node] at (2,0) (2) {};
\node[main node] at (3,1) (3) {};
\node[main node] at (5,1) (5) {};
\node[main node] at (6,0) (6) {};
\draw[ultra thick] (0) -- (2) -- (3)-- (5)-- (6);
;\end{tikzpicture}
&

\begin{tikzpicture}[scale = \sc, font=\scriptsize\sffamily, thick,main node/.style={circle,inner sep=0.4mm,draw, fill}]
\def\xlb{0}; \def\xub{6}; \def\ylb{0}; \def\yub{2}; \def\buf{0}; \grid
\node[main node] at (0,0) (0) {};
\node[main node] at (1,1) (1) {};
\node[main node] at (3,1) (3) {};
\node[main node] at (4,2) (4) {};
\node[main node] at (5,1) (5) {};
\node[main node] at (6,0) (6) {};
\draw[ultra thick] (0) -- (1) -- (3)-- (4)-- (5)-- (6);
\end{tikzpicture}
\\
\\

$ L(L(a_1))a_2a_3 $&
$ L(a_1L(a_2)a_3)$&
$ L(a_1a_2L(a_3)) $&
$ L(a_1)L(a_2)a_3 $&
$L(a_1)a_2L(a_3) $&
$a_1L(L(a_2))a_3 $\\
\\

\begin{tikzpicture}[scale = \sc, font=\scriptsize\sffamily, thick,main node/.style={circle,inner sep=0.4mm,draw, fill}]
\def\xlb{0}; \def\xub{6}; \def\ylb{0}; \def\yub{2}; \def\buf{0}; \grid
\node[main node] at (0,0) (0) {};
\node[main node] at (1,1) (1) {};
\node[main node] at (2,2) (2) {};
\node[main node] at (4,2) (4) {};
\node[main node] at (5,1) (5) {};
\node[main node] at (6,0) (6) {};
\draw[ultra thick] (0) -- (1) -- (2) -- (4)-- (5)-- (6);
\end{tikzpicture}
&

\begin{tikzpicture}[scale = \sc, font=\scriptsize\sffamily, thick,main node/.style={circle,inner sep=0.4mm,draw, fill}]
\def\xlb{0}; \def\xub{6}; \def\ylb{0}; \def\yub{2}; \def\buf{0}; \grid
\node[main node] at (0,0) (0) {};
\node[main node] at (1,1) (1) {};
\node[main node] at (3,1) (3) {};
\node[main node] at (5,1) (5) {};
\node[main node] at (6,0) (6) {};
\draw[ultra thick] (0) -- (1)  -- (3)-- (5)-- (6);
\end{tikzpicture}
&

\begin{tikzpicture}[scale = \sc, font=\scriptsize\sffamily, thick,main node/.style={circle,inner sep=0.4mm,draw, fill}]
\def\xlb{0}; \def\xub{6}; \def\ylb{0}; \def\yub{2}; \def\buf{0}; \grid
\node[main node] at (0,0) (0) {};
\node[main node] at (2,0) (2) {};
\node[main node] at (4,0) (4) {};
\node[main node] at (6,0) (6) {};
\draw[ultra thick] (0) -- (2) --  (4)--  (6);
\end{tikzpicture}
\\
$a_1L(a_2)L(a_3) $&
$ a_1a_2L(L(a_3)) $&
$L(L(L(a_1)))$
\end{tabular}
\caption{Illustrating the bijection $f_5$ for all $M \in \M_3$ with $\topt(M)=3$ (with descent labels suppressed)}\label{fig401}
\end{figure}
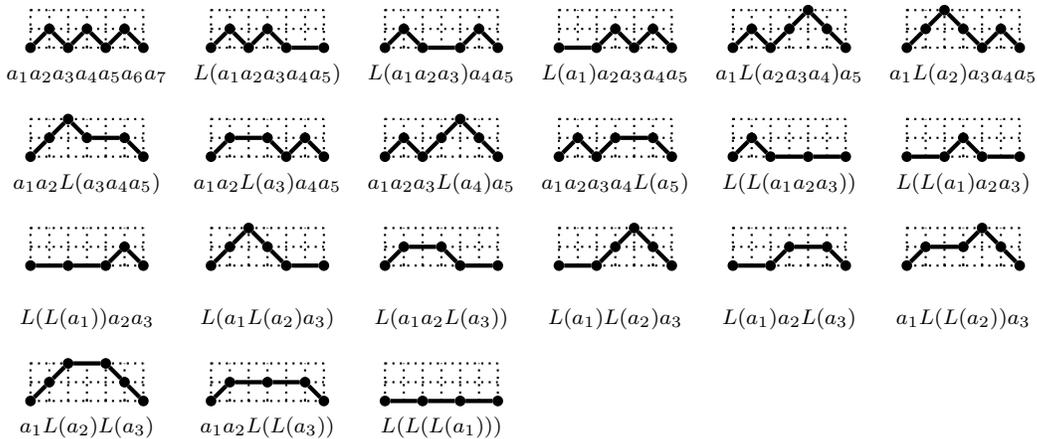

Theorem~\ref{thm401} shows that $\widetilde{\S}_d$ provides yet another combinatorial interpretation for $N_d(n,k)$.

\begin{theorem}\label{thm401}
Let $d \geq 2$ and $n,k \geq 0$ be integers. Then $N_d(n,k)$ counts the number of Schr\"{o}der paths in $\widetilde{\S}_d$  with semilength $n$ and with exactly $k$ occurrences in total of $H$ and $DD$.
\end{theorem}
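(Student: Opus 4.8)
The plan is to show that the map $f_5 \colon \M_d \to \widetilde{\S}_d$ constructed above is a bijection that sends each monomial $M$ with $\topt(M) = n$ and $\lopt(M) = k$ to a path of semilength $n$ having exactly $k$ occurrences in total of $H$ and $DD$. Granting this, Theorem~\ref{thm201} immediately gives the stated count. I would organize the argument into three parts: well-definedness of $f_5$, the two statistic identities, and bijectivity.

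First I would confirm by induction on $\topt(M)$ that $f_5(M) \in \widetilde{\S}_d$. The indeterminate and $L(\cdot)$ cases are immediate, so the only work is the product case $M = M_0 M_1 \cdots M_{d-1}$, where the appended descent has length $|I|+1$. Since $I \subseteq [d-2]$, this length is at most $d-1$ and is labelled by a set $I$ of size $(|I|+1)-1$, exactly as required by the definition of $\widetilde{\S}_d$. The key local point is that this descent does not merge with any preceding down steps: because $M_{d-1}$ is irreducible, $f_5(M_{d-1})$ is either empty (in which case it is preceded by the inserted $U$) or ends in $H$, so the maximal descent created is precisely $D^{|I|+1}$.

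Next I would prove both statistic identities simultaneously by induction, namely that the semilength of $f_5(M)$ equals $\topt(M)$, and that $h(f_5(M)) + dd(f_5(M)) = \lopt(M)$, writing $h(P)$ and $dd(P)$ for the numbers of $H$ steps and of consecutive $DD$ factors in $P$. Again only the product case requires computation: there one observes that $|I|+1$ new up steps and a descent of length $|I|+1$ are introduced, contributing $|I|+1$ to the semilength and $|I|$ new $DD$ factors with no new $H$, and then uses $\topt(M_{i_j}') = \topt(M_{i_j}) - 1$ and $\lopt(M_{i_j}') = \lopt(M_{i_j}) - 1$, together with the fact that $\topt(M_i) = \lopt(M_i) = 0$ for every $i \in [d-2]\setminus I$ (such $M_i$ being single indeterminates). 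The $\pm|I|$ terms telescope against the new steps and reproduce exactly $\topt(M) = \topt(M_0) + \sum_{i=1}^{d-1}\topt(M_i) + 1$ and $\lopt(M) = \lopt(M_0) + \sum_{i=1}^{d-1}\lopt(M_i)$.

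Finally I would establish bijectivity by exhibiting the inverse. A nonempty path in $\widetilde{\S}_d$ ends either in $H$ or in a maximal descent $D^m$, and these two possibilities, together with the empty path, correspond to the three defining cases of $\M_d$; in particular the trailing step unambiguously selects the case. For a trailing $H$ one strips it and recurses, setting $M = L(\cdot)$. For a trailing descent $D^m$ labelled by $I$ (so $m = |I|+1$), the $m$ down steps match (in the sense defined for these paths) the $m$ up steps whose starting heights stack as $0,1,\dots,m-1$; deleting the descent and these $m$ up steps splits the path into $m+1$ blocks that recursively decode to $M_0, M_{i_1}', \dots, M_{i_{|I|}}', M_{d-1}$, while the label $I$ records which positions among $1,\dots,d-2$ carry $L$-factors and which carry single indeterminates. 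I expect this bijectivity step to be the main obstacle: the crux is verifying that the reconstruction is well-defined and forced, that the number of up steps matched by the final descent is exactly $|I|+1$, and that the label $I$ supplies precisely the information about the suppressed single-indeterminate factors that is lost when those factors map to empty subpaths. Once this is matched against the unique factorization of $M$ into irreducibles, $f_5$ and its inverse are mutually inverse and the theorem follows.
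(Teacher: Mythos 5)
Your proposal is correct and follows essentially the same route as the paper's proof: a case analysis of $f_5$ showing well-definedness and the two statistic identities $|f_5(M)|=\topt(M)$, $\hdd(f_5(M))=\lopt(M)$ (with the same key observation that $f_5$ of an irreducible monomial is empty or ends in $H$, so the appended descent has length exactly $|I|+1$), followed by an explicit inverse built from the trailing step and the unique decomposition $S = S^{(0)}US^{(1)}\cdots US^{(\ell)}D^{\ell}$ with the label $I$ recovering the suppressed single-indeterminate factors. No substantive differences to report.
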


\begin{proof}
For convenience, given a Schr\"{o}der path $S$, we let $\hdd(S)$ denote the total number of instances of $H$ and $DD$ in $S$. Let $M \in \M_d$. We examine the mapping $f_5$ case by case.

If $M$ is the single indeterminate, then $\topt(M) = \lopt(M) = 0$, and $f_5(M)$, the empty path, also has $|f_5(M)| = \hdd(f_5(M)) = 0$.

If $M = L(M')$ for some $M' \in \M_d$, then $\topt(M') = \topt(M) -1$ and $\lopt(M') = \lopt(M) -1$. Now $f_5(M) = f_5(M')H$, and so we also have $| f_5(M') | = |f_5(M)| -1$ and $\hdd(f_5(M')) = \hdd(f_5(M)) -1$.

Now suppose $M = M_0 M_1\cdots M_{d-1}$ where $M_i$ is irreducible for every $i \in [d-1]$. From the definition of $f_5$, we have
\[
f_5(M) = f_5(M_0) U f_5( M_{i_1}') U f_5( M_{i_2}') \cdots U f_5( M_{i_{|I|}}) U  f_5(M_{d-1}) \underset{I}{
\underline{D^{|I|+1}}}.
\]
In this case, we have 
\begin{align*}
\topt(f_5(M)) &= \topt(M_0) + \sum_{i \in I} \topt( M_i') + \topt(M_{d-1}) + 1 + |I|,\\
\lopt(f_5(M)) &= \lopt(M_0) + \sum_{i \in I} \lopt( M_i') + \lopt(M_{d-1}) + |I|.
\end{align*}
We also have $|f_5(M)| = | f_5(M_0)| + \sum_{i \in I} | f_5(M_i')| + | f_5(M_{d-1})| + 1 + |I|$. Notice that given any irreducible monomial $M$, $f_5(M)$ is either the empty path or must end with an $H$, and thus $f_5(M_{d-1})$ cannot end with a $D$. Therefore, $f_5(M)$ ends with a descent of length $|I|+1$, which contains $|I|$ instances of $DD$. Hence, $\hdd(f_5(M)) = \hdd( f_5(M_0)) + \sum_{i \in I} \hdd( f_5(  M_i')) + \hdd(f_5(M_{d-1})) + |I|$.

Since $f_5$ never produces a descent of length at least $d$ and labels every descent of length $\ell \geq 1$ with an $(\ell-1)$-element subset of $[d-2]$, we have $f_5(M) \in \widetilde{\S}_d$ for every $M \in \M_d$, with $|f_5(M)| = \topt(M)$ and $\hdd(f_5(M)) = \lopt(M)$. To complete our proof, it suffices to show that the inverse function of $f_5$ exists.

Let $S \in \widetilde{\S}_{d}$. If $S$ is the empty path, then $f_5^{-1}(S)$ is the monomial with a single indeterminate. Otherwise, $S$ must either end with an $H$ or a $D$. If $S = S'H$ for some $S' \in \widetilde{\S}_d$, then $f_5^{-1}(S) = L (f_5^{-1}(S'))$. Otherwise, suppose $S$ ends with a descent of length $\ell \geq 1$. Then we can uniquely write
\[
S = S^{(0)} U S^{(1)} U S^{(2)} \cdots U S^{(\ell)} \underset{I}{ \underline{D^{\ell}}}
\]
for some $S^{(0)}, \ldots, S^{(\ell)} \in \widetilde{\S}_d$, with an $(\ell-1)$-element set $I \subseteq [d-2]$ marking the terminal descent of $S$. Then we obtain that $f_5^{-1}(S) = M_0M_1 \cdots M_d$ where
\begin{itemize}
\item
$M_0 = f_5^{-1} (S^{(0)})$ and $M_{d-1} = f_5^{-1} ( S^{(\ell)})$.
\item
For every $i \in [d-2]$, if $i \not\in I$, then $M_i$ is a single indeterminate. Otherwise, if $i$ is the $j$-th smallest index in $I$, then $M_i = L( f_5^{-1}( S^{(j)}))$.
\end{itemize}
Thus, the mapping $f_5$ is indeed invertible, and our claim follows.
\end{proof}

Again, given $S \in \widetilde{\S}_d$, every descent of length $1$ is labelled by $\emptyset$. Thus, we can ignore these markings, in which case $\widetilde{\S}_2$ gives exactly the (unlabelled) Schr\"{o}der paths without descents of length at least $2$, which is known to be counted by the Catalan numbers~\cite[Chapter 2, Exercise 49]{Stanley15}. Likewise, when $d=3$, every instance of $DD$ is labelled
by the set $\set{1}$, and so we can ignore these markings as well. In that case, our theorem specializes to Huh et al.'s result~\cite[Theorem 6.2(2) and Corollary 6.3]{HuhKSS24}, which shows that $N_3(n,k)$ counts the number of Schr\"{o}der paths $S$ where $|S|=n$, $\hdd(S)=k$, and $S$ does not contain descents of length at least $3$.

\subsection{Generalized $F$-paths}

We next describe a set of lattice paths that can be seen as a generalization of the notion of $F$-paths defined in Huh et al.~\cite{HuhKSS24}. Given an integer $d \geq 2$, let $\F_d$ denote the set of lattice paths such that:

\begin{itemize}
\item the path begins at $(0,0)$, and always stays on or above the line $y=x$;
\item every step has the form $(\ell,1)$ for some integer $\ell \geq 0$;
\item every step of the form $(\ell,1)$ with $\ell \geq 1$ is labelled by a composition of $\ell-1$ consisting of $d-1$ nonnegative parts (i.e., an ordered list of $d-1$ nonnegative integers which sum to $\ell - 1$).
\end{itemize}

For example,
\[
F=  (0,1),  (0,1), \underset{(0,0)}{\underline{(1,1)}}, (0,1), \underset{(0,2)}{\underline{(3,1)}}, \underset{(1,0)}{\underline{(2,1)}}
\]
is an element of $\F_3$. Also, given $F \in \F_d$, we let $|F|$ denote the number of steps in $F$. For instance, Figure~\ref{fig402} illustrates the $6$ elements in $\F_3$ where $|F| = 2$. 

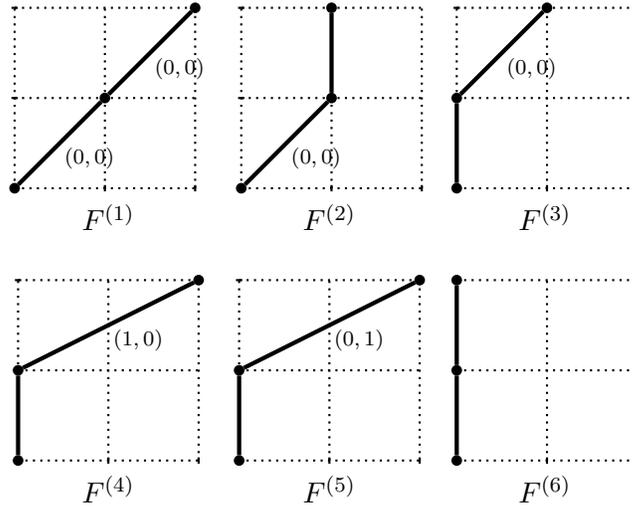
\begin{figure}[htbp]
\centering
\def\sc{1.2}
\def\grid{\foreach \x in {\xlb ,...,\xub}
{    \ifthenelse{\NOT 0 = \x}{\draw[thick](\x ,-1pt) -- (\x ,1pt);}{}
\draw[dotted, thick](\x,\ylb- \buf) -- (\x,\yub + \buf);}
\foreach \y in {\ylb ,...,\yub}
{    \ifthenelse{\NOT 0 = \y}{\draw[thick](-1pt, \y) -- (1pt, \y);}{}
\draw[dotted, thick](\xlb- \buf, \y) -- (\xub + \buf, \y);}
}

\begin{tabular}{ccc}
\begin{tikzpicture}[scale = \sc, font=\scriptsize\sffamily, thick,main node/.style={circle,inner sep=0.4mm,draw, fill}]
\def\xlb{0}; \def\xub{2}; \def\ylb{0}; \def\yub{2}; \def\buf{0}; \grid
\node[main node] at (0,0) (0) {};
\node[main node] at (1,1) (1) {};
\node[main node] at (2,2) (2) {};
\draw[ultra thick] (0) --node[pos=0.5, below right, inner sep=1pt right, inner sep=1pt] {$(0,0)$} (1) --node[pos=0.5, below right, inner sep=1pt right, inner sep=1pt] {$(0,0)$} (2);
\end{tikzpicture}
&
\begin{tikzpicture}[scale = \sc, font=\scriptsize\sffamily, thick,main node/.style={circle,inner sep=0.4mm,draw, fill}]
\def\xlb{0}; \def\xub{2}; \def\ylb{0}; \def\yub{2}; \def\buf{0}; \grid
\node[main node] at (0,0) (0) {};
\node[main node] at (1,1) (1) {};
\node[main node] at (1,2) (2) {};
\draw[ultra thick] (0) --node[pos=0.5, below right, inner sep=1pt] {$(0,0)$} (1) -- (2);
\end{tikzpicture}
&
\begin{tikzpicture}[scale = \sc, font=\scriptsize\sffamily, thick,main node/.style={circle,inner sep=0.4mm,draw, fill}]
\def\xlb{0}; \def\xub{2}; \def\ylb{0}; \def\yub{2}; \def\buf{0}; \grid
\node[main node] at (0,0) (0) {};
\node[main node] at (0,1) (1) {};
\node[main node] at (1,2) (2) {};
\draw[ultra thick] (0) -- (1) --node[pos=0.5, below right, inner sep=1pt] {$(0,0)$} (2);
\end{tikzpicture}
\\
$F^{(1)}$&
$F^{(2)}$&
$F^{(3)}$
\\
\\
\begin{tikzpicture}[scale = \sc, font=\scriptsize\sffamily, thick,main node/.style={circle,inner sep=0.4mm,draw, fill}]
\def\xlb{0}; \def\xub{2}; \def\ylb{0}; \def\yub{2}; \def\buf{0}; \grid
\node[main node] at (0,0) (0) {};
\node[main node] at (0,1) (1) {};
\node[main node] at (2,2) (2) {};
\draw[ultra thick] (0) -- (1) --node[pos=0.5, below right, inner sep=1pt] {$(1,0)$} (2);
\end{tikzpicture}
&
\begin{tikzpicture}[scale = \sc, font=\scriptsize\sffamily, thick,main node/.style={circle,inner sep=0.4mm,draw, fill}]
\def\xlb{0}; \def\xub{2}; \def\ylb{0}; \def\yub{2}; \def\buf{0}; \grid
\node[main node] at (0,0) (0) {};
\node[main node] at (0,1) (1) {};
\node[main node] at (2,2) (2) {};
\draw[ultra thick] (0) -- (1) --node[pos=0.5, below right, inner sep=1pt] {$(0,1)$} (2);
\end{tikzpicture}
&
\begin{tikzpicture}[scale = \sc, font=\scriptsize\sffamily, thick,main node/.style={circle,inner sep=0.4mm,draw, fill}]
\def\xlb{0}; \def\xub{2}; \def\ylb{0}; \def\yub{2}; \def\buf{0}; \grid
\node[main node] at (0,0) (0) {};
\node[main node] at (0,1) (1) {};
\node[main node] at (0,2) (2) {};
\draw[ultra thick] (0) -- (1) -- (2);
\end{tikzpicture}
\\
$F^{(4)}$ &
$F^{(5)}$ &
$F^{(6)}$ 
\end{tabular}

\caption{The $6$ lattice paths $F \in \F_3$ with $|F| = 2$}\label{fig402}
\end{figure}
Observe from the definition of \( \mathcal{F}_d \) that the step \( (1,1) \) is always labelled by the composition where every part is \( 0 \). As with the elements in \( \widetilde{\mathcal{S}}_d \), preserving these somewhat redundant markings will be helpful in establishing our results.

Next, given \( M \in \mathcal{M}_d \), we define \( \lofi(M) \) (\emph{linear operators on first indeterminate}) to be the number of linear operators \( L \) applied to the first (i.e., leftmost) indeterminate in \( M \). For instance, among \( M \in \mathcal{M}_3 \) with \( \topt(M) = 3 \), the seven elements with \( \lofi(M) = 1 \) are
\[
\begin{array}{llll}
L(a_1 a_2 a_3 a_4 a_5), &
 L(a_1 a_2 a_3) a_4 a_5, &
  L(a_1) a_2 a_3 a_4 a_5,&
   L(a_1 (L(a_2) a_3)), \\[3pt]
   L(a_1 a_2 L(a_3)),&
    L(a_1) L(a_2) a_3, &
    L(a_1) a_2 L(a_3).
\end{array}
\]
Observe that \( 0 \leq \lofi(M) \leq \lopt(M) \) for every \( M \in \mathcal{M}_d \). Next, given \( F \in \mathcal{F}_d \), let \( \north(F) \) denote the number of \( (0,1) \) steps in \( F \). Also, let \( \height(F) \) denote \( y_n - x_n \), where \( (x_n, y_n) \) is the terminal point of the path \( F \). For example, for the paths in Figure~\ref{fig402}, we have 
\[
\height(F^{(1)}) = \height(F^{(4)}) = \height(F^{(5)}) = 0, \quad  \height(F^{(2)}) = \height(F^{(3)}) = 1, \quad
\height(F^{(6)}) = 2.
\]
Next, we define the mapping \( f_6: \mathcal{M}_d \to \mathcal{F}_d \) recursively as follows.

\begin{itemize}
    \item If \( M \) is a single indeterminate, then \( f_6(M) \) is the empty path.
    \item If \( M = L(M') \) for some monomial \( M' \in \mathcal{M}_d \), then \( f_6(M) \ce f_6(M'), (0,1) \).
    \item Otherwise, we can write \( M = M_0 M_1 \cdots M_{d-1} \) such that \( M_0 \in \mathcal{M}_d \) and \( M_i \in \overline{\mathcal{M}}_d \) for every \( i \in [d-1] \). Let \( m_i \ce \lofi(M_i) \) for every \( i \in [d-1] \), and let \( m \ce 1 + \sum_{i=1}^{d-1} m_i \). Define
    \[
    f_6(M) \ce F_0, F_1, \ldots, F_{d-1}, \underset{(m_1, \ldots, m_{d-1})}{\underline{(m,1)}}
    \]
    where \( F_0 \ce f_6(M_0) \). For every \( i \in [d-1] \), \( F_i \) is the empty path if \( M_i \) is a single indeterminate; otherwise \( M_i = L(M_i') \) for some \( M_i' \in \mathcal{M}_d \), and we define \( F_i \ce (0,1), f_6(M_i') \).
\end{itemize}

For example, let $M \ce L(a_1L(a_2a_3a_4) L^2(a_5)) \in \M_3$. Then $f_6(M)$ is computed as follows.
\begin{align*}
f_6(M) &=f_6( L(a_1L(a_2a_3a_4) L^2(a_5))) \\
&=   f_6(a_1L(a_2a_3a_4) L^2(a_5)  ) , (0,1)\\
&= (0,1), f_6(a_2a_3a_4), (0,1), f_6(L(a_5)), \underset{(1,2)}{\underline{(4,1)}}, (0,1)\\
&= (0,1), \underset{(0,0)}{\underline{(1,1)}}, (0,1), (0,1), \underset{(1,2)}{\underline{(4,1)}} ,(0,1),
\end{align*}
which is indeed an element in $\F_3$. Notice that $|f_6(M)| = \topt(M) =6$, $\north(f_6(M)) = \lopt(M) =4$, and $\height(f_6(M)) = \lofi(M) = 1$. Also, for the paths given in Figure~\ref{fig402}, we have
\[
\begin{array}{lll}
f_6(a_1a_2a_3a_4a_5) = F^{(1)}, &
f_6(L(a_1a_2a_3)) = F^{(2)}, &
f_6(L(a_1)a_2a_3) = F^{(3)}, \\[3pt]
f_6(a_1L(a_2)a_3) = F^{(4)}, &
f_6(a_1a_2L(a_3)) = F^{(5)}, &
f_6(L(L(a_1))) = F^{(6)}. 
\end{array}
\]
Next, we demonstrate that \( f_6 \) is indeed a bijection.

\begin{theorem}\label{thm402}
Let \( d \geq 2 \) and \( n, k \geq 0 \) be integers. Then \( N_d(n,k) \) counts the number of lattice paths in \( \F_d \) consisting of a total of \( n \) steps, \( k \) of which are \( (0,1) \).
\end{theorem}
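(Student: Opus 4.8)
The plan is to combine the bijection $f_6$ with Theorem~\ref{thm201}. By that theorem, $N_d(n,k)$ equals the number of $M \in \M_d$ with $\topt(M) = n$ and $\lopt(M) = k$, so it suffices to prove that $f_6 \colon \M_d \to \F_d$ is a bijection satisfying $|f_6(M)| = \topt(M)$ and $\north(f_6(M)) = \lopt(M)$ for every $M \in \M_d$. Since the $(0,1)$ steps of $f_6(M)$ are exactly those counted by $\north$, the statement then follows at once.

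First I would prove, by induction following the three cases of the recursive definition of $f_6$, that $f_6(M) \in \F_d$ together with the three invariants
\[
|f_6(M)| = \topt(M), \quad \north(f_6(M)) = \lopt(M), \quad \height(f_6(M)) = \lofi(M).
\]
The single-indeterminate and $M = L(M')$ cases are immediate. In the reducible case $M = M_0 M_1 \cdots M_{d-1}$, the relations $\topt(M) = \topt(M_0) + \sum_{i=1}^{d-1}\topt(M_i) + 1$ and $\lopt(M) = \lopt(M_0) + \sum_{i=1}^{d-1}\lopt(M_i)$ (which come, as in Theorem~\ref{thm201}, from peeling the last $d-1$ irreducible factors), together with $\topt(M_i) = \topt(M_i')+1$ and $\lopt(M_i) = \lopt(M_i')+1$ for the non-indeterminate factors, match the step counts of $f_6(M) = F_0, F_1, \ldots, F_{d-1}, (m,1)$. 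The third invariant is indispensable because the horizontal displacement $m = 1 + \sum_{i=1}^{d-1}\lofi(M_i)$ of the final step is governed by $\lofi$, so the induction cannot close on $(|F|,\north(F))$ alone. Tracking $\height$ also makes membership in $\F_d$ transparent: the label $(m_1, \ldots, m_{d-1}) = (\lofi(M_1), \ldots, \lofi(M_{d-1}))$ is a composition of $m-1 = \sum_i m_i$ into $d-1$ nonnegative parts, as required; and since a step $(\ell,1)$ changes $y-x$ by $1 - \ell$, the terminal height of $f_6(M)$ is $\lofi(M_0) = \lofi(M) \geq 0$. Because height-changes are translation invariant, a subpath staying on or above its starting level when drawn from $(0,0)$ does so from any starting height; applying this to $F_0$ and to each block $(0,1)\,f_6(M_i')$ shows $f_6(M)$ never drops below $y=x$.

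Finally I would construct $f_6^{-1}$, where the main work lies. Given a nonempty $F \in \F_d$, I read off the last step: if it is $(0,1)$ then $f_6^{-1}(F) = L(f_6^{-1}(F'))$ where $F = F',(0,1)$; otherwise the last step is $(m,1)$ with label $(m_1, \ldots, m_{d-1})$, and deleting it leaves $F'' = F_0 F_1 \cdots F_{d-1}$, which must be split uniquely into its $d$ blocks. \textbf{This unique decomposition is the main obstacle.} The key structural fact is that each nonempty block $F_i = (0,1)\,f_6(M_i')$ stays \emph{strictly} above its entry height after its first step (since $f_6(M_i')$ never drops below its own start), and ends $m_i$ units above that entry height. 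Consequently, reading $F''$ from right to left and using the known values $m_{d-1}, \ldots, m_1$, each block $F_i$ is recovered as the segment from the rightmost lattice point at height $h_0 + \sum_{j<i} m_j$ (where $h_0 = \height(F)$ is the terminal height of $F$) to the current right end; what remains after peeling off $F_1$ is exactly $F_0$. The strict-above property guarantees that this ``last-return'' rule is well defined, since after the start of each block the path never again attains that block's entry height. Checking that the inverse so described undoes $f_6$ in each case then completes the proof.
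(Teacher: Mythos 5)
Your proposal is correct and follows essentially the same route as the paper: establish by induction on the three cases of $f_6$ that $|f_6(M)|=\topt(M)$, $\north(f_6(M))=\lopt(M)$, and $\height(f_6(M))=\lofi(M)$, then invert $f_6$ by locating each block $F_i$ via the last visit to the height $\height(F)+\sum_{j<i}m_j$ prescribed by the composition label, which is exactly the paper's ``last instance of $(0,1)$ at a given height'' rule. Your explicit justification that tracking $\height=\lofi$ is forced by the horizontal displacement of the final step, and that each nonempty block stays strictly above its entry height, matches the paper's argument.
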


\begin{proof}
Let \( M \in \M_d \). Notice that every step in \( f_6(M) \) has the form \( (\ell,1) \) for some \( \ell \geq 0 \), and by construction $f_6(M)$ always stays on or above the line $y=x$. Also, when \( \ell \geq 1 \), \( f_6 \) labels the step \( (\ell,1) \) with a composition of \( \ell-1 \) with \( d-1 \) nonnegative parts. Thus, \( f_6(M) \in \F_d \) for every \( M \in \M_d \).

Next, we prove that \( |f_6(M)| = \topt(M) \) by strong induction on \( \topt(M) \). If \( \topt(M) = 0 \), then \( M \) is a single indeterminate, and \( f_6(M) \) is the empty path, leading to \( |f_6(M)| = 0 = \topt(M) \). Now suppose \( \topt(M) \geq 1 \). If \( M = L(M') \) for some \( M' \in \M_d \), then \( \topt(M') = \topt(M) - 1 \), and by the inductive hypothesis, \( |f_6(M')| = \topt(M') \). Therefore, we have \( f_6(M) = f_6(M'), (0,1) \), which implies
\[
|f_6(M)| = |f_6(M')| + 1 = \topt(M') + 1 = \topt(M).
\]
Otherwise, we can express \( M \) as \( M = M_0 M_1 \cdots M_{d-1} \), where \( M_i \) is irreducible for every \( i \in [d-1] \). Notice that \( \topt(M) = \sum_{i=1}^{d} \topt(M_i) + 1 \). From the definition of \( f_6 \), we have:
\[
f_6(M) = F_0, F_1, \ldots, F_{d-1}, \underset{(m_1, \ldots, m_{d-1})}{\underline{(m,1)}}.
\]
For each \( i \in [d-1] \), either \( F_i \) is the empty path (in which case \( |F_i| = \topt(M_i) = 0 \)), or \( F_i = (0,1), f_6(M_i') \) where \( M_i = L(M_i') \). In the latter case, we know that $|f_6(M_i')| = \topt(M_i')$ by the inductive hypothesis, and so we have
\[
|F_i| = |f_6(M_i')| + 1 = |M_i'| + 1 = |M_i|.
\]
Additionally, since \( F_0 \) is defined as \( f_6(M_0) \), we conclude that \( |F_i| = \topt(M_i) \) for every \( i \in \{0, \ldots, d-1\} \). Hence,
\[
|f_6(M)| = \sum_{i=0}^{d-1} |F_i| + 1 = \sum_{i=0}^{d-1} \topt(M_i) + 1 = \topt(M).
\]
This shows that \( |f_6(M)| = \topt(M) \) in all cases. The same inductive argument can be applied to show that \( \north(f_6(M)) = \lopt(M) \) and \( \height(f_6(M)) = \lofi(M) \) for every \( M \in \M_d \).

Next, we describe the inverse function of \( f_6 \) to complete the proof. Let \( F \in \F_{d} \). If \( F \) is the empty path, then \( f_6^{-1}(F) \) is the monomial with a single indeterminate. Otherwise, we can express \( F \) as \( F = F', (\ell,1) \) for some \( F' \in \F_d\) and \( \ell \geq 0 \). If \( \ell = 0 \), then we have \( f_6^{-1}(F) = L(f_6^{-1}(F')) \). If \( \ell \geq 1 \), the step \( (\ell,1) \) is labelled by a composition \( (\ell_1, \ldots, \ell_{d-1}) \) of \( \ell-1 \). In this case, we note that \( \height(F') = \height(F) + m - 1 \). This implies that for every \( j \in [\height(F) + \ell - 2] \), there exists a \( (0,1) \) step in \( F' \) that transitions from height \( j-1 \) to height \( j \). Then we write
\[
F' = F_0, F_1, \ldots, F_{d-1}
\]
such that for every \( i \in [d-1] \), \( F_i \) is the empty path if \( \ell_i = 0 \); otherwise, the first step of \( F_i \) is the last instance of \( (0,1) \) in \( F' \) that brings the path from height \( \height(F) + \sum_{j=1}^{i-1} \ell_j  \) to \( \height(F) + \sum_{j=1}^{i-1} \ell_j +1 \). By choosing to start at the last instance of \( (0,1) \) at a given height, the path \( F_i \) (as a lattice path in its own right) must start with \( (0,1) \) and cannot subsequently dip below the line \( y=x+1 \). Thus, we can write \( F_i = (0,1), F_i' \) for some \(F_i' \in \F_d\) in this case. Moreover, notice that \( \height(F_i) = \ell_i \) by construction, which also implies that \( \height(F_0) = \height(F) \).

From there, we can express \( f_6^{-1}(F) = M_0 M_1 \cdots M_{d-1} \) as follows. First, \( M_0 = f_6^{-1}(F_0)$. For every \( i \in [d-1] \), if \( F_i \) is the empty path, then \( M_i \) is a single indeterminate. Otherwise, if \( F_i = (0,1), F_i' \) for some \( F_i' \in \F_d \), then \( M_i = L(f_6^{-1}(F_i')) \). Thus, the inverse of \( f_6 \) is well-defined, completing the proof.
\end{proof}

We note that \( \F_3 \) is a slight variant of the \( F \)-paths studied in Huh et al.~\cite{HuhKSS24}. More precisely, the set of eligible steps for our lattice paths in \( \F_3 \) is given by
\[
\set{ \underset{(j, \ell-1-j)}{ \underline{(\ell, 1)}} : \ell \geq 1, j \geq 0} \cup \set{(0,1)}.
\]
In contrast, the set of eligible steps for the original \( F \)-paths from~\cite{HuhKSS24} is
\[
\set{ (a, b) : a \geq 1, b \leq 1} \cup \set{(0,1)}.
\]
In particular, a bijection from \( \F_3 \) to the set of original \( F \)-paths can be established by replacing every step of the form \( \underset{(j, \ell-1-j)}{ \underline{(\ell,1)}} \) with the step \( (\ell - j, 1 - j) \).

Additionally, when \( d=2 \), we observe that every step \( (\ell, 1) \) (where \( \ell \geq 1 \)) is labelled by the one-part composition \( (\ell-1) \). Therefore, we can ignore these markings in this case. Now notice that the lattice path
\[
F \ce (\ell_1, 1), (\ell_2, 1), \ldots, (\ell_n, 1)
\]
belongs to \( \F_2 \) if and only if the following conditions hold:

\begin{itemize}
\item \( \ell_i \geq 0 \) for every \( i \in [n] \), and
\item \( \sum_{j=1}^i \ell_j  \leq i \) for every \( i \in [n] \).
\end{itemize}
(The latter condition ensures that the path remains on or above the line \( y=x \).) Now given a sequence $(\ell_1, \ldots, \ell_n)$ which satisfies the above, consider the sequence $(a_1, \ldots, a_{n+1})$ where $a_i \ce 1 + \sum_{j=1}^{i-1} \ell_j$ for every $i \in [n+1]$. Then we have $1 = a_1 \leq a_2 \leq \cdots \leq a_{n+1}$, with $a_i \leq i$ for every $i \in [n+1]$, which gives a known combinatorial interpretation of the Catalan numbers (see, for instance, \cite[Chapter 2, Exercise 78]{Stanley15}).

\subsection{Labelled Dyck paths}

Given $d \geq 2$, we define $\widetilde{\Q}_d$ to be the set of nonempty Dyck paths such that:
\begin{itemize}
\item
the last descent is unlabelled;
\item
every other descent of length $\ell \geq 1$ is labelled by a composition of $\ell-1$ with $d-1$ nonnegative parts.
\end{itemize}
For instance, an element in $\widetilde{\Q}_3$ is
\[
UUU\underset{(0,1)}{\underline{DD}}UU\underset{(0,0)}{\underline{D}}UU\underset{(2,1)}{\underline{DDDD}}UUDD.
\]
Figure~\ref{fig403} shows all $6$ labelled Dyck paths in $\widetilde{\Q}_3$ with semilength $3$.

\begin{figure}[htbp]
\centering
\def\sc{0.8}
\def\grid{\foreach \x in {\xlb ,...,\xub}
{    \ifthenelse{\NOT 0 = \x}{\draw[thick](\x ,-1pt) -- (\x ,1pt);}{}
\draw[dotted, thick](\x,\ylb- \buf) -- (\x,\yub + \buf);}
\foreach \y in {\ylb ,...,\yub}
{    \ifthenelse{\NOT 0 = \y}{\draw[thick](-1pt, \y) -- (1pt, \y);}{}
\draw[dotted, thick](\xlb- \buf, \y) -- (\xub + \buf, \y);}
}

\begin{tabular}{ccc}
\begin{tikzpicture}[scale = \sc, font=\tiny\sffamily, thick,main node/.style={circle,inner sep=0.4mm,draw, fill}]
\def\xlb{0}; \def\xub{6}; \def\ylb{0}; \def\yub{3}; \def\buf{0}; \grid
\node[main node] at (0,0) (0) {};
\node[main node] at (1,1) (1) {};
\node[main node] at (2,0) (2) {};
\node[main node] at (3,1) (3) {};
\node[main node] at (4,0) (4) {};
\node[main node] at (5,1) (5) {};
\node[main node] at (6,0) (6) {};
\draw[ultra thick] (0) -- (1) --node[pos=0.5, below left, inner sep=0pt] {$(0,0)$} (2) -- (3) --node[pos=0.5, below left, inner sep=0pt] {$(0,0)$} (4) -- (5) -- (6);
\end{tikzpicture}
&
\begin{tikzpicture}[scale = \sc, font=\tiny\sffamily, thick,main node/.style={circle,inner sep=0.4mm,draw, fill}]
\def\xlb{0}; \def\xub{6}; \def\ylb{0}; \def\yub{3}; \def\buf{0}; \grid
\node[main node] at (0,0) (0) {};
\node[main node] at (1,1) (1) {};
\node[main node] at (2,0) (2) {};
\node[main node] at (3,1) (3) {};
\node[main node] at (4,2) (4) {};
\node[main node] at (5,1) (5) {};
\node[main node] at (6,0) (6) {};
\draw[ultra thick] (0) -- (1) --node[pos=0.5, below left, inner sep=0pt] {$(0,0)$} (2) -- (3) -- (4) -- (5) -- (6);
\end{tikzpicture}
&
\begin{tikzpicture}[scale = \sc, font=\tiny\sffamily, thick,main node/.style={circle,inner sep=0.4mm,draw, fill}]
\def\xlb{0}; \def\xub{6}; \def\ylb{0}; \def\yub{3}; \def\buf{0}; \grid
\node[main node] at (0,0) (0) {};
\node[main node] at (1,1) (1) {};
\node[main node] at (2,2) (2) {};
\node[main node] at (3,1) (3) {};
\node[main node] at (4,2) (4) {};
\node[main node] at (5,1) (5) {};
\node[main node] at (6,0) (6) {};
\draw[ultra thick] (0) -- (1) -- (2) --node[pos=0.5, below left, inner sep=0pt] {$(0,0)$} (3) -- (4) -- (5) -- (6);
\end{tikzpicture}
\\
$Q^{(1)}$&
$Q^{(2)}$&
$Q^{(3)}$
\\
\\
\begin{tikzpicture}[scale = \sc, font=\tiny\sffamily, thick,main node/.style={circle,inner sep=0.4mm,draw, fill}]
\def\xlb{0}; \def\xub{2}; \def\ylb{0}; \def\yub{2}; \def\buf{0}; \grid
\def\xlb{0}; \def\xub{6}; \def\ylb{0}; \def\yub{3}; \def\buf{0}; \grid
\node[main node] at (0,0) (0) {};
\node[main node] at (1,1) (1) {};
\node[main node] at (2,2) (2) {};
\node[main node] at (3,1) (3) {};
\node[main node] at (4,0) (4) {};
\node[main node] at (5,1) (5) {};
\node[main node] at (6,0) (6) {};
\draw[ultra thick] (0) -- (1) -- (2) -- (3) -- (4) -- (5) -- (6);
\draw (2) --node[pos=0.5, below left, inner sep=0pt] {$(1,0)$} (4);
\end{tikzpicture}
&
\begin{tikzpicture}[scale = \sc, font=\tiny\sffamily, thick,main node/.style={circle,inner sep=0.4mm,draw, fill}]
\def\xlb{0}; \def\xub{6}; \def\ylb{0}; \def\yub{3}; \def\buf{0}; \grid
\node[main node] at (0,0) (0) {};
\node[main node] at (1,1) (1) {};
\node[main node] at (2,2) (2) {};
\node[main node] at (3,1) (3) {};
\node[main node] at (4,0) (4) {};
\node[main node] at (5,1) (5) {};
\node[main node] at (6,0) (6) {};
\draw[ultra thick] (0) -- (1) -- (2) -- (3) -- (4) -- (5) -- (6);
\draw (2) --node[pos=0.5, below left, inner sep=0pt] {$(0,1)$} (4);
\end{tikzpicture}

&
\begin{tikzpicture}[scale = \sc, font=\tiny\sffamily, thick,main node/.style={circle,inner sep=0.4mm,draw, fill}]
\def\xlb{0}; \def\xub{6}; \def\ylb{0}; \def\yub{3}; \def\buf{0}; \grid
\node[main node] at (0,0) (0) {};
\node[main node] at (1,1) (1) {};
\node[main node] at (2,2) (2) {};
\node[main node] at (3,3) (3) {};
\node[main node] at (4,2) (4) {};
\node[main node] at (5,1) (5) {};
\node[main node] at (6,0) (6) {};
\draw[ultra thick] (0) -- (1) -- (2) -- (3) -- (4) -- (5) -- (6);
\end{tikzpicture}
\\
$Q^{(4)}$ &
$Q^{(5)}$ &
$Q^{(6)}$ 
\end{tabular}
\caption{The $6$ labelled Dyck paths $Q \in \widetilde{Q}_3$ with $|Q| = 3$}\label{fig403}
\end{figure}

We show that there is a very simple bijection between $\F_d$ and $\widetilde{\Q}_d$. Given $F \in \F_d$ where
\[
F = (\ell_1, 1), (\ell_2, 1), \ldots, (\ell_n, 1), 
\]
define
\[
f_7(F) \ce UD^{\ell_1} UD^{\ell_2}\cdots UD^{\ell_n} UD^{n+1-\sum_{i=1}^n \ell_i}.
\]
Moreover, for every $i \in [n]$ where $\ell_i \geq 1$, we mark the descent $D^{\ell_i}$ in $f_7(F)$ by the same composition that labelled the step $(\ell_i, 1)$ in $F$. For example, consider $F \in \F_3$ where
\[
F \ce (0,1), \underset{(0,0)}{\underline{(1,1)}}, (0,1), (0,1), \underset{(1,2)}{\underline{(4,1)}} ,(0,1).
\]
Then
\[
f_7(F) = UU\underset{(0,0)}{\underline{D}}UUU\underset{(1,2)}{\underline{DDDD}}UUDD.
\]
Also, for the lattice paths in Figure~\ref{fig402} and labelled Dyck paths in Figure~\ref{fig403}, we have $f_7(F^{(i)}) = Q^{(i)}$ for every $i \in [6]$. 
Then we have the following.

\begin{theorem}\label{thm403}
Let $d \geq 2$ and $n,k \geq 0$ be integers. Then $N_d(n,k)$ counts the number of labelled Dyck paths in $\widetilde{\Q}_d$ with semilength $n+1$ and $k$ instances of $UU$.
\end{theorem}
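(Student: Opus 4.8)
The plan is to show that $f_7 : \F_d \to \widetilde{\Q}_d$ is a bijection and to track how it transforms the two relevant statistics; the theorem then follows immediately by composing with the bijection $f_6$ of Theorem~\ref{thm402}, which already establishes that $N_d(n,k)$ counts the paths in $\F_d$ with $n$ steps of which exactly $k$ are $(0,1)$. So the only work left is to verify that $f_7$ is a well-defined bijection sending the $(0,1)$-step count to the $UU$-count while raising the length to the semilength $n+1$.

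First I would check that $f_7(F) \in \widetilde{\Q}_d$ for every $F = (\ell_1,1), \ldots, (\ell_n,1) \in \F_d$. Counting letters, $f_7(F)$ has exactly $n+1$ up steps and $\sum_{i=1}^n \ell_i + (n+1 - \sum_{i=1}^n \ell_i) = n+1$ down steps, hence semilength $n+1$. For the Dyck (nonnegativity) condition, after the prefix $UD^{\ell_1}\cdots UD^{\ell_i}$ the path sits at height $i - \sum_{j=1}^i \ell_j$, which is $\geq 0$ precisely because $F$ stays on or above the line $y=x$, i.e. $\sum_{j=1}^i \ell_j \leq i$ for every $i \in [n]$; in particular $n+1 - \sum_{j=1}^n \ell_j \geq 1$, so the terminal descent is genuine and the path returns to the axis only at its end. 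The non-final maximal descents of $f_7(F)$ are exactly the blocks $D^{\ell_i}$ with $\ell_i \geq 1$, each carried over with the composition of $\ell_i - 1$ into $d-1$ nonnegative parts that labelled $(\ell_i,1)$ in $F$, while the terminal descent is left unlabelled. This is exactly the defining data of $\widetilde{\Q}_d$.

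Next I would read off the statistics and the inverse. An instance of $UU$ in $f_7(F)$ occurs between the $i$-th and $(i+1)$-th up step precisely when no down step separates them, i.e. when $\ell_i = 0$; the number of such indices $i \in [n]$ is exactly $\north(F)$, so $f_7$ sends a path with $k$ steps of type $(0,1)$ to a labelled Dyck path of semilength $n+1$ with $k$ instances of $UU$. For the inverse, any Dyck path of semilength $n+1$ admits a unique factorization $UD^{a_1} UD^{a_2}\cdots UD^{a_{n+1}}$ with $a_i \geq 0$, $\sum_{i=1}^{n+1} a_i = n+1$, and $\sum_{j=1}^i a_j \leq i$ for all $i$; given $Q \in \widetilde{\Q}_d$ I set $\ell_i \ce a_i$ for $i \in [n]$, transfer the label of $D^{a_i}$ (when $a_i \geq 1$) to $(\ell_i,1)$, and discard the forced value $a_{n+1} = n+1 - \sum_{j=1}^n \ell_j$ of the unlabelled terminal descent. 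The inequalities $\sum_{j=1}^i \ell_j \leq i$ guarantee the result lies on or above $y=x$, and the label conditions on $\widetilde{\Q}_d$ and $\F_d$ coincide, so this yields a well-defined element of $\F_d$ inverting $f_7$.

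I expect the only genuinely delicate point to be the translation between the ``on or above $y=x$'' constraint in $\F_d$ and the standard nonnegativity constraint for Dyck paths, together with the verification that the blocks $D^{\ell_i}$ really are maximal descents so that their labels transfer correctly; the statistic count and the uniqueness of the Dyck factorization are then routine bookkeeping.
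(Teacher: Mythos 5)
Your proposal is correct and follows essentially the same route as the paper: verify that $f_7$ carries $\F_d$ bijectively onto $\widetilde{\Q}_d$, that the $y\geq x$ condition translates into the Dyck nonnegativity condition, that $UU$ instances correspond to $(0,1)$ steps, and that labels transfer unchanged; the paper leaves the inverse as ``straightforward'' where you spell it out. The only slip is your claim that the path ``returns to the axis only at its end,'' which is false in general (e.g.\ $F=(1,1)$ gives $UDUD$), but nothing in the argument uses it.
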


\begin{proof}
Given $F \in \F_d$, where
\[
F = (\ell_1, 1), (\ell_2, 1), \ldots, (\ell_n, 1), 
\]
we know by the definition of $\F_d$ that $\sum_{i=1}^j \ell_i \leq j$ for every $j \in [n]$, which assures that $f_7(F)$ never contains a prefix with more down steps than up steps. Also, given that $F$ contains $n$ steps,  $f_7(F)$ must contain exactly $n+1$ up steps and $n+1$ down steps, and thus must indeed be a Dyck path with semilength $n+1$. Thus, $|f_7(F)| = |F| + 1$. Also, observe that the $i$-th instance of $U$ in $f_7(F)$ is followed by another $U$ if and only if $\ell_i = 0$. Thus, we see that the number of instances of $UU$ in $f_7(F)$ is exactly equal to the number of instances of $(0,1)$ in $F$. Furthermore, since the composition labellings are unaltered by $f_7$, we see that $f_7(F) \in \widetilde{\Q}_d$ for every $F \in \F_d$. Finally, it is rather straightforward to see that $f_7$ is reversible, and thus the claim follows.
\end{proof}

We remark that, instead of labelling each descent of length $\ell$ with a composition $(\ell_1, \ldots, \ell_{d-1})$ which sums to $\ell-1$, we can equivalently think of allowing $d-1$ possible types of down steps $D_1, \ldots, D_{d-1}$, and requiring that each descent (except the last one in the path) of length $\ell$ has the form
\[
D_1^{\ell_1}D_2^{\ell_2} \cdots D_{d-1}^{\ell_{d-1}+1}.
\]
This gives a $(d-1)$-coloured Dyck path interpretation of $\widetilde{\Q}_d$. When $d=3$, this specializes to the restricted bi-coloured Dyck paths studied in B\'{e}nyi et al.~\cite{BenyiMR24}, Huh et al.~\cite{HuhKSS24}, and Yan and Lin~\cite{YanL20}.

Moreover, when $d=2$, there is only $d-1=1$ type of down steps, and $\widetilde{\Q}_2$ is simply the set of (unlabelled and nonempty) Dyck paths. In this case, Theorem~\ref{thm403} assures that $N_2(n,k)$ counts the number of Dyck paths with semilength $n+1$ and $k$ instances of $UU$. While we cannot find a reference mentioning this exact fact, we offer a simple independent proof of it using known properties about Narayana numbers and Dyck paths.

\begin{corollary}\label{cor403}
Let  $n,k \geq 0$ be integers. Then $N_2(n,k)$ counts the number of Dyck paths with semilength $n+1$ and $k$ instances of $UU$.
\end{corollary}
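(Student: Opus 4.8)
The plan is to deduce this from Theorem~\ref{thm303} specialized to $d=2$, together with the palindromic symmetry of the Narayana numbers. Recall that Theorem~\ref{thm303} with $d=2$ asserts that $N_2(n,k)$ counts the Dyck paths of semilength $n+1$ with exactly $k+1$ \emph{peaks} (instances of $UD$). The remaining task is thus to convert a statement about the $UD$-statistic into one about the $UU$-statistic, and the link between the two is supplied by counting ascents.

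First I would record the relation between the number of $UU$ factors and the number of peaks in a fixed Dyck path. In any Dyck path, the \emph{ascents} (maximal runs of consecutive up steps) are in bijection with the peaks: each ascent is immediately followed by a down step, producing exactly one peak at its top. If a Dyck path of semilength $m$ has $p$ peaks and its ascents have lengths $a_1, \dots, a_p$ (each $a_i \geq 1$), then an ascent of length $a_i$ contains precisely $a_i - 1$ factors of $UU$, so the total number of $UU$ factors equals
\[
\sum_{i=1}^p (a_i - 1) = \Big( \sum_{i=1}^p a_i \Big) - p = m - p,
\]
since the lengths of the ascents sum to the total number $m$ of up steps. Consequently, among Dyck paths of semilength $n+1$, having exactly $k$ factors of $UU$ is equivalent to having exactly $p = (n+1) - k$ peaks.

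Next I would apply Theorem~\ref{thm303} to count these paths. The number of Dyck paths of semilength $n+1$ with $p = n+1-k$ peaks is $N_2(n, p-1) = N_2(n, n-k)$. It therefore remains to observe the symmetry $N_2(n,k) = N_2(n,n-k)$. This follows directly from the closed form $N_2(n,k) = \tfrac{1}{n+1}\binom{n+1}{k+1}\binom{n+1}{k}$ (the $d=2$ case of the definition): replacing $k$ by $n-k$ sends $\binom{n+1}{k+1}\binom{n+1}{k}$ to $\binom{n+1}{n-k+1}\binom{n+1}{n-k} = \binom{n+1}{k}\binom{n+1}{k+1}$, which is unchanged. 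Combining these facts yields that $N_2(n,k) = N_2(n,n-k)$ equals the number of Dyck paths of semilength $n+1$ with $k$ factors of $UU$, as desired.

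The argument has no serious obstacle; the only point requiring care is the bookkeeping of the two off-by-one shifts --- one from the peak count ($p = k+1$ in Theorem~\ref{thm303}) and one from the ascent identity ($UU$-count $= m-p$) --- which together are exactly what forces the appeal to the symmetry $k \mapsto n-k$ rather than a direct identification. Recognizing that the $UU$-statistic is \emph{complementary} to the $UD$-statistic (peaks), rather than equal to it, is the conceptual crux.
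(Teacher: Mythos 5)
Your proposal is correct and follows essentially the same route as the paper: both start from the fact that $N_2(n,k)$ counts Dyck paths of semilength $n+1$ with $k+1$ peaks, observe that the $UU$-count and the peak count of a Dyck path sum to its semilength (you via ascent lengths, the paper via classifying what follows each up step), and then invoke the symmetry $N_2(n,k)=N_2(n,n-k)$ from the closed formula. The only cosmetic difference is that you cite Theorem~\ref{thm303} at $d=2$ where the paper cites the literature directly.
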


\begin{proof}
Recall that $N_2(n,k)$ counts the number of Dyck paths with semilength $n+1$ and $k+1$ peaks~\cite[Section 2.4.2]{Petersen15}. Also, in any Dyck path $Q$, every up step is either followed by another up step (which creates an instance of $UU$) or a down step (which forms a peak). Thus, the number of $UU$ and peak instances must sum to $|Q|$. Hence, we see that $N_2(n,k)$ counts the number of Dyck paths with semilength $n+1$ and $n-k$ instances of $UU$. Finally, it is easy to see from the formula of $N_2(n,k)$ that $N_2(n,k) = N_2(n,n-k)$ for every $n,k \geq 0$. Thus, the claim follows.
\end{proof}

Since $N_d(n,k) \neq N_d(n,n-k)$ in general, the proof of Corollary~\ref{cor403} seemingly cannot be extended to translate Theorem~\ref{thm403} into counting paths in $\widetilde{\Q}_d$ in terms of the number of peaks when $d \geq 3$.

\subsection{Labelled ordered trees}

As is the case for the mapping $f_3 : \T_d \to \Q_d$ discussed in Section~\ref{sec303}, herein we leverage similar ideas to obtain a labelled ordered tree interpretation of $N_d(n,k)$ by applying a path-to-tree mapping from $\widetilde{\Q}_d$. Given $d \geq 2$, we define  $\widetilde{\T}_d$ to be the set of labelled ordered trees such that:
\begin{itemize}
\item
the root node is an internal node, and is unlabelled;
\item
every non-root internal node with outdegree $\ell \geq 1$ is labelled by a composition of $\ell-1$ with $d-1$ nonnegative parts.
\end{itemize}
Figure~\ref{fig404} illustrates the $6$ trees in $\widetilde{\T}_d$ with $3$ edges.

\begin{figure}[htbp]
\centering
\def\sc{1}
\def\ysc{0.8}
\begin{tabular}{cccccc}

\begin{tikzpicture}[xscale=\sc, yscale = \ysc,thick,main node/.style={circle,inner sep=0.5mm,draw,font=\small\sffamily}, word node/.style={font=\scriptsize}
]
\node[main node] at (2,2) (0) {};
\node[main node] at (2,1) (1) {};
\node[main node] at (2,0) (2) {};
\node[main node] at (2,-1) (3) {};

\node[word node, align=left, right] at (2,1) {$(0,0)$};
\node[word node, align=left, right] at (2,0) {$(0,0)$};

 \path[every node/.style={font=\sffamily}]
(0) edge (1) (1) edge (2) (2) edge (3);
\end{tikzpicture}

&
\begin{tikzpicture}[xscale=\sc, yscale = \ysc,thick,main node/.style={circle,inner sep=0.5mm,draw,font=\small\sffamily}, word node/.style={font=\scriptsize}
]
\node[main node] at (2,2) (0) {};
\node[main node] at (1.5,1) (1) {};
\node[main node] at (2.5,1) (2) {};
\node[main node] at (2.5,0) (3) {};

\node[word node, align=left, right] at (2.5,1) {$(0,0)$};

 \path[every node/.style={font=\sffamily}]
(0) edge (1) (0) edge (2) (2) edge (3);
\end{tikzpicture}

&

\begin{tikzpicture}[xscale=\sc, yscale = \ysc,thick,main node/.style={circle,inner sep=0.5mm,draw,font=\small\sffamily}, word node/.style={font=\scriptsize}
]
\node[main node] at (2,2) (0) {};
\node[main node] at (1.5,1) (1) {};
\node[main node] at (2.5,1) (2) {};
\node[main node] at (1.5,0) (3) {};

\node[word node, align=right, left] at (1.5,1) {$(0,0)$};

 \path[every node/.style={font=\sffamily}]
(0) edge (1) (0) edge (2) (1) edge (3);
\end{tikzpicture}
&

\begin{tikzpicture}[xscale=\sc, yscale = \ysc,thick,main node/.style={circle,inner sep=0.5mm,draw,font=\small\sffamily}, word node/.style={font=\scriptsize}
]
\node[main node] at (2,2) (0) {};
\node[main node] at (2,1) (1) {};
\node[main node] at (1.5,0) (2) {};
\node[main node] at (2.5,0) (3) {};

\node[word node, align=left, right] at (2,1) {$(1,0)$};

 \path[every node/.style={font=\sffamily}]
(0) edge (1) (1) edge (2) (1) edge (3);
\end{tikzpicture}

&

\begin{tikzpicture}[xscale=\sc, yscale = \ysc,thick,main node/.style={circle,inner sep=0.5mm,draw,font=\small\sffamily}, word node/.style={font=\scriptsize}
]
\node[main node] at (2,2) (0) {};
\node[main node] at (2,1) (1) {};
\node[main node] at (1.5,0) (2) {};
\node[main node] at (2.5,0) (3) {};

\node[word node, align=left, right] at (2,1) {$(0,1)$};

 \path[every node/.style={font=\sffamily}]
(0) edge (1) (1) edge (2) (1) edge (3);
\end{tikzpicture}

&

\begin{tikzpicture}[xscale=\sc, yscale = \ysc,thick,main node/.style={circle,inner sep=0.5mm,draw,font=\small\sffamily}, word node/.style={font=\scriptsize}
]
\node[main node] at (2,2) (0) {};
\node[main node] at (1,1) (1) {};
\node[main node] at (2,1) (2) {};
\node[main node] at (3,1) (3) {};


 \path[every node/.style={font=\sffamily}]
(0) edge (1) (0) edge (2) (0) edge (3);
\end{tikzpicture}

\\

$T^{(1)}$ &
$T^{(2)}$ &
$T^{(3)}$ &
$T^{(4)}$ &
$T^{(5)}$ &
$T^{(6)}$ 
\end{tabular}
\caption{The $6$ trees in $\widetilde{\T}_3$ with $3$ edges}
\label{fig404}
\end{figure}

We define a bijection $f_8 : \widetilde{\Q}_d \to \widetilde{\T}_d$ as follows. Given $Q \in \widetilde{\Q}_d$ with semilength $n$, find integers $\ell_1, \ldots, \ell_n$ where
\[
Q = UD^{\ell_1} UD^{\ell_2} \cdots UD^{\ell_n}.
\]
Then we let $f_8(Q)$ be the unique ordered tree on $n+1$ vertices for which the sequence of vertex outdegrees read in preorder traversal is $(\ell_n, \ldots, \ell_1, 0)$. Moreover, for every $i \in [n-1]$, the non-root internal node in $f_8(Q)$ with outdegree $\ell_i$ is labelled by the same composition that labelled the descent $D^{\ell_i}$ in $Q$. For example, consider the labelled Dyck paths in Figure~\ref{fig403} and labelled ordered trees in Figure~\ref{fig404}. Then $f_8(Q^{(i)}) = T^{(i)}$ for every $i \in [6]$. Then we have the following.

\begin{theorem}\label{thm404}
Let $d \geq 2$ and $n,k \geq 0$ be integers. Then $N_d(n,k)$ counts the number of labelled ordered trees in $\widetilde{\T}_d$ with $n+1$ edges and $k+1$ leaves.
\end{theorem}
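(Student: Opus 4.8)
The plan is to build directly on Theorem~\ref{thm403}. Since $N_d(n,k)$ already counts the labelled Dyck paths in $\widetilde{\Q}_d$ with semilength $n+1$ and exactly $k$ instances of $UU$, it suffices to show that $f_8$ is a bijection $\widetilde{\Q}_d \to \widetilde{\T}_d$ that sends semilength to edge count and the $UU$-count $k$ to the leaf count $k+1$, all while respecting the composition labels.

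First I would establish that $f_8$ is a well-defined bijection at the level of the underlying (unlabelled) tree structure; this is the classical encoding of an ordered tree by its preorder sequence of outdegrees. Writing $Q = UD^{\ell_1}\cdots UD^{\ell_m}$ for $Q \in \widetilde{\Q}_d$ of semilength $m$, I would recall the standard criterion that a sequence $(d_1,\dots,d_{m+1})$ of nonnegative integers is the preorder outdegree sequence of an ordered tree on $m+1$ vertices if and only if $\sum_{i=1}^{m+1} d_i = m$ and $\sum_{i=1}^{j}(d_i-1) \ge 0$ for every $j \le m$. The crux is to check that applying this criterion to $(d_1,\dots,d_{m+1}) = (\ell_m,\dots,\ell_1,0)$ is equivalent to the defining inequalities of a Dyck path: since $\sum_t \ell_t = m$, the partial-sum condition on the reversed sequence translates under the reindexing $j \mapsto m-j$ into $\sum_{t=1}^{i}\ell_t \le i$, which is exactly the requirement that $Q$ remain on or above the $x$-axis after each of its first $i$ blocks (the boundary cases $i=0$ and $i=m$ being trivial or forced by balance). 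Hence the reversed-with-trailing-zero sequence is a valid preorder outdegree sequence precisely when $Q$ is a Dyck path, which gives a bijection between the step-length data of $\widetilde{\Q}_d$ and the outdegree data of the ordered trees. (This map is in effect the composition $f_3^{-1}\circ\rho$, where $\rho$ reverses a Dyck path and swaps $U\leftrightarrow D$; I would note this to connect $f_8$ with the earlier map $f_3$ of Theorem~\ref{thm303}, but carry out the verification directly since the constraint and label structures of $\widetilde{\Q}_d$ differ from those of $\Q_d$.)

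Next I would track the three statistics. The tree $f_8(Q)$ has $m+1$ vertices, hence $m$ edges, so semilength $n+1$ yields $n+1$ edges. For the leaf count I first observe that $\ell_m \ge 1$ always, since $\ell_m = 0$ would force $\sum_{t=1}^{m-1}\ell_t = m$, contradicting $\sum_{t=1}^{m-1}\ell_t \le m-1$. Consequently the final $U$ of $Q$ is never part of a $UU$, so the number of $UU$ occurrences equals $\lvert\{i\in[m] : \ell_i = 0\}\rvert$. The leaves of $f_8(Q)$ are exactly its vertices of outdegree $0$, namely the appended trailing vertex together with those $i$ for which $\ell_i = 0$; thus the number of leaves is $\lvert\{i\in[m] : \ell_i = 0\}\rvert + 1$, precisely one more than the number of $UU$ steps. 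Hence $k$ instances of $UU$ correspond to $k+1$ leaves.

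Finally I would verify label compatibility. Under the reversal, the $i$-th descent $D^{\ell_i}$ corresponds to the $(m+1-i)$-th vertex in preorder, which has outdegree $\ell_i$. For $i=m$ this is the root, unlabelled in $\widetilde{\T}_d$ and matched to the unlabelled last descent of $\widetilde{\Q}_d$; moreover $\ell_m \ge 1$ makes the root internal, as required. For $i \in [m-1]$ with $\ell_i \ge 1$, the descent carries a composition of $\ell_i - 1$ into $d-1$ nonnegative parts, and $f_8$ transfers this identical composition to the corresponding non-root internal node of outdegree $\ell_i$; descents of length $0$ and leaves are both unlabelled. As these conventions are in exact correspondence, $f_8$ restricts to a bijection $\widetilde{\Q}_d \to \widetilde{\T}_d$, with inverse obtained by reading the preorder outdegree sequence of a tree in $\widetilde{\T}_d$, discarding the trailing $0$, reversing, and reinstating the compositions. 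Combining this with Theorem~\ref{thm403} completes the proof. I expect the main obstacle to be the first step, namely the clean verification that the tree-validity (suffix-sum) condition on $(\ell_m,\dots,\ell_1,0)$ is equivalent, after reindexing, to the Dyck (prefix-sum) condition on $(\ell_1,\dots,\ell_m)$; once this structural bijection and the trailing-vertex convention are pinned down, the statistic and label bookkeeping is routine.
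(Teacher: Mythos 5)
Your proposal is correct and follows essentially the same route as the paper: both prove the result by showing that $f_8$ transports the statistics of Theorem~\ref{thm403} (semilength $\mapsto$ edge count, $UU$-count $k \mapsto k+1$ leaves via the zero entries of the reversed outdegree sequence) and the composition labels from $\widetilde{\Q}_d$ to $\widetilde{\T}_d$. You are somewhat more careful than the paper in explicitly verifying that $(\ell_m,\dots,\ell_1,0)$ is a valid preorder outdegree sequence exactly when the prefix-sum (Dyck) condition holds, and in noting $\ell_m \ge 1$, but this is the same argument fleshed out rather than a different one.
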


\begin{proof}
Suppose we are given $Q \ce UD^{\ell_1} UD^{\ell_2} \cdots UD^{\ell_n} \in \widetilde{\Q}_d$ where, for every $i \in [n-1]$, the descent $D^{\ell_i}$ is labelled by a composition of $\ell-1$ with $d-1$ nonnegative parts. By construction, $f_8(Q)$ is a tree whose root node has outdegree $\ell_n$. Since the last descent in $Q$ is unlabelled, so is the root node of $f_8(Q)$. Also, since $f_8(Q)$ has $|Q|+1$ vertices, it must have $|Q|$ edges. Next, the number of leaves in $f_8(Q)$ is equal to the number of zero entries in the sequence $(\ell_n, \ldots, \ell_1, 0)$, which in turn is equal to the number of instances of $(0,1)$ in $Q$ plus 1.

The inverse mapping of $f_8$ is also straightforward. Given $T \in \widetilde{\T}_d$, one lists the outdegrees of its vertices in preorder traversal, omitting the last entry (which must be zero), and then apply the path-to-tree mapping $f_3$ defined in Section~\ref{sec303}. This results in a Dyck path $Q'$ where the first ascent is unmarked, while every other ascent of length $\ell$ is labelled by a composition of $\ell-1$ with $d-1$ parts. Then we reflect $Q'$ horizontally by swapping its up and down steps and reading the steps in reverse order, which results in the labelled Dyck path $f_8^{-1}(T) \in \widetilde{\Q}_d$.
\end{proof}

Notice that when $d=3$, each non-root internal node with outdegree $\ell \geq 1$ has $\ell$ possible labels: $(0, \ell-1), (1,\ell-2), \ldots, (\ell-1,0)$. Thus, $\F_3$ can be alternatively seen as the set of ordered trees in which every non-root internal node is labelled by a positive integer less than or equal to its outdegree. This set of trees has been studied in B\'{e}nyi et al.~\cite{BenyiMR24}, Huh et al.~\cite{HuhKSS24}, and Yan and Lin~\cite{YanL20}. When $d=2$, the marking of each internal node is determined by its outdegree, and thus we might consider the trees in $\widetilde{\T}_2$ unlabelled. Thus, as with Theorem~\ref{thm303}, Theorem~\ref{thm404} implies that $N_2(n,k)$ counts the number of unlabelled ordered trees with $n+1$ edges and $k+1$ leaves.

\section{Some future research directions}\label{sec5}

In this manuscript, we introduced the generalized Narayana numbers $N_d(n,k)$, 
and saw nine combinatorial interpretations of them, 
some of which generalize known interpretations of $N_3(n,k)$. 
On the other hand, there are a number of other combinatorial objects which correspond to 
$C_3(n)$ and $N_3(n,k)$ that we have not discussed. 
For instance, it is known that $C_3(n)$ counts the following sets:
\begin{itemize}
 \item
permutations of $[n+1]$ which avoid the patterns $4123$, $4132$, and $4213$~\cite{AlbertHPSV18};
\item
permutations of $[n+1]$ which avoid the patterns $2341$, $2431$, and $3241$~\cite{CallanM18, GaoK19, HuhKSS24};
 \item
 inversion sequences of length $n+1$ (i.e.,  $(\ell_1, \ldots, \ell_{n+1})$ where $0 \leq \ell_i \leq i-1$ for every $i \in [n+1]$) which avoid the patterns $101$ and $102$~\cite{CaoJL19, HuhKSS24};
 \item
 inversion sequences of length $n+1$ which avoid the patterns $101$ and $021$~\cite{HuhKSS24, YanL20}.
 \end{itemize}
 
Can any of these results be generalized to objects counted by $C_d(n)$ and $N_d(n,k)$? Since $C_2(n)$ and $C_3(n)$ are known to count certain subsets of permutations on $[n+1]$, it readily follows that $C_d(n) \leq (n+1)!$ holds for all $n \geq 0$ when $d \in \set{2,3}$. However,  the inequality fails starting at $d =4$ (e.g., $C_4(2) = 7 > 3!$). Thus, if a generalization exists, it would likely involve permutations or inversion sequences of size greater than $n+1$, and/or the use of differentiating labellings.
  
\bibliographystyle{plain}
\bibliography{ref} 

\end{document}